\definecolor{webgreen}{rgb}{0,.5,0}
\definecolor{webbrown}{rgb}{.6,0,0}
\definecolor{webblue}{rgb}{0,0,.8}
\providecommand\urlcolor{webbrown}
\providecommand\citecolor{webgreen}
\providecommand\linkcolor{webblue}
\newcommand{\stackcite}[1]{
\cite[\href{https://stacks.math.columbia.edu/tag/#1}{Tag #1}]{thestacksprojectauthorsStacksProject2018}}
\newtheorem{thm}{Theorem}[section]
\Crefname{thm}{Theorem}{Theorems}
\newtheorem{lem}[thm]{Lemma}
\newtheorem{prop}[thm]{Proposition}
\Crefname{prop}{Proposition}{Propositions}
\newtheorem{cor}[thm]{Corollary}
\theoremstyle{definition}
\newtheorem{rem}[thm]{Remark}
\newtheorem{defn}[thm]{Definition}
\newtheorem{algo}[thm]{Algorithm}
\numberwithin{equation}{section}
\newcommand{\overbar}[1]{\mkern 1.5mu\overline{\mkern-1.5mu#1\mkern-1.5mu}\mkern 1.5mu}
\newcommand{\from}{\colon}
\newcommand{\defby}{\colon}
\newcommand{\st}{\colon}
\newcommand{\conj}[1]{\overbar{#1}}
\newcommand{\wideconj}[1]{\overline{#1}}
\newcommand{\embeds}{\hookrightarrow}
\newcommand{\setdiv}{\backslash}
\newcommand{\tp}{t}
\newcommand{\iso}{\ensuremath{\simeq}}
\newcommand{\defi}{\coloneqq}
\newcommand{\dualv}{\vee}
\newcommand{\sym}{\ensuremath{\mathrm{sym}}}
\newcommand\tangent[2]{T_{#1}(#2)}
\newcommand{\different}{\Z_K^\dualv}
\newcommand{\mat}[4]{\left(\begin{matrix}#1&#2\\#3&#4\end{matrix}\right)}
\newcommand{\tmat}[4]{\left(\begin{smallmatrix}#1&#2\\#3&#4\end{smallmatrix}\right)}
\newcommand{\vectwo}[2]{\left(\begin{matrix}#1\\#2\end{matrix}\right)}
\newcommand{\R}{\mathbb{R}}
\newcommand{\C}{\mathbb{C}}
\newcommand{\Q}{\mathbb{Q}}
\newcommand{\Z}{\mathbb{Z}}
\newcommand{\F}{\mathbb{F}}
\newcommand{\Pvar}{\mathbb{P}}
\newcommand{\Avar}{\mathbb{A}}
\newcommand{\Half}{\mathbb{H}}
\newcommand{\Crv}{{\mathcal{C}}}
\newcommand{\M}{\mathcal{M}}
\newcommand{\Lb}{{\mathcal{L}}}
\newcommand{\A}{\mathcal{A}}
\newcommand{\Otilde}{\smash{\widetilde{O}}}
\newcommand{\Zhat}{\widehat{\Z}}
\newcommand{\kbar}{\overline{k}}
\newcommand{\betabar}{\overline{\beta}}
\newcommand{\XX}{\mathscr{X}}
\newcommand{\CC}{\mathscr{C}}
\newcommand{\MM}{\mathscr{M}}
\newcommand{\YY}{\mathscr{Y}}
\newcommand{\OO}{\mathcal{O}}
\newcommand{\DeclareMyOperator}[1]{%
  \expandafter\DeclareMathOperator\csname #1\endcsname{#1} }
\newcommand{\DeclareMathOperators}{\forcsvlist{\DeclareMyOperator}}
\DeclareMathOperator{\chr}{char}
\newcommand{\pol}{\mathcal{L}}
\newcommand{\prc}{\nu}
\newcommand{\djdtau}{DJ}
\newcommand{\stack}{\mathcal{A}}
\newcommand{\coarse}{\mathbf{A}}
\newcommand{\av}{A}
\newcommand{\isog}{\varphi}
\newcommand{\disog}{d\varphi}
\newcommand{\defo}{\mathscr{D}}
\newcommand{\Hodge}{\mathsf{H}}
\newcommand{\Ag}{\coarse_{g}}
\newcommand{\Agn}{{\coarse}_{g,n}}
\newcommand{\Atwo}{\coarse_2}
\newcommand{\Mtwo}{\mathbf{M}_2}
\newcommand{\Agl}{{{\coarse}_{g}(\ell)}}
\newcommand{\Htwo}{{\mathbf{H}_2}}
\newcommand{\Hg}{{\mathbf{H}_{g}}}
\newcommand{\Hgbeta}{\mathbf{H}_{g}(\beta)}
\newcommand{\PHI}{\mathbf{\Phi}}
\newcommand{\PHII}{\Psi_0} 
\newcommand{\PHIIb}{\Psi_{\beta}} 
\newcommand{\PHIIH}{\Psi_{\beta,\betabar}} 
\newcommand{\AAg}{{\stack_{g}}}
\newcommand{\AAgn}{\stack_{g,n}}
\newcommand{\AAGL}{{\stack_{g,\ell}}}
\newcommand{\AAgl}{{\stack_{g}(\ell)}}
\newcommand{\AAtwo}{\stack_2}
\newcommand{\XXg}{{\mathscr{X}_{g}}}
\newcommand{\XXgl}{{\mathscr{X}_{g}(\ell)}}
\newcommand{\HHg}{{\mathscr{H}_{g}}}
\newcommand{\HHgbeta}{\mathscr{H}_{g}(\beta)}
\newcommand{\PPHI}{\mathit{\Phi}}
\newcommand{\poly}{W}
\title[Computing isogenies from modular equations in genus two]{Computing isogenies from modular equations\\ in genus two}
\author{Jean Kieffer}
\author{Aurel Page}
\author{Damien Robert}
\subjclass{14K02, 14K10, 14Q20}
\keywords{Abelian varieties, isogenies, modular equations, algorithms}
\begin{document}

\begin{abstract}
  Consider two genus~$2$ curves over a field whose Jacobians are linked by an
  isogeny of known type: either an~$\ell$-isogeny or, in the real
  multiplication case, an isogeny with cyclic kernel. We present a completely
  algebraic algorithm to compute this isogeny using modular equations of either
  Siegel or Hilbert type. An essential step of independent interest is to
  construct an explicit Kodaira--Spencer isomorphism for principally polarized
  abelian surfaces.
\end{abstract}

\maketitle

\section{Introduction}

Since the pioneering work of Vélu~\cite{veluIsogeniesEntreCourbes1971} in the
case of elliptic curves, several algorithms are available to solve the
following problem: given a principally polarized~(p.p.) abelian variety~$A$ and
a torsion subgroup~$K$ of~$A$ such that~$A/K$ is also principally polarizable,
compute the quotient isogeny~$A\to A/K$. Some of these algorithms work with
Jacobians of curves, of genus~$2$ in
particular~\cite{cossetComputingEllEll2015,
  couveignesComputingFunctionsJacobians2015}; others use theta functions and
apply in every dimension~\cite{lubiczComputingSeparableIsogenies2015,
  dudeanuCyclicIsogeniesAbelian2022, lubiczFastChangeLevel2022}.

In this paper, we are interested in the reverse question: given two
p.p.~abelian varieties~$A$ and~$A'$ linked by an isogeny~$\isog$ of a known
type and degree but unknown kernel, compute~$\isog$. We present a completely
algebraic algorithm for this task that generalizes Elkies's isogeny algorithm
for elliptic curves~\cite{elkiesEllipticModularCurves1998}, and thus solve a
longstanding open problem in isogeny computations
\cite[§1.1.2]{ballentineIsogeniesPointCounting2016}.

\subsection{Main results}

Elkies's algorithm uses an explicit equation for the modular curve of
level~$\Gamma_0(\ell)$ to compute $\ell$-isogenies between elliptic curves,
where~$\ell$ is a prime. More generally, we explain how algebraic equations
encoding the presence of isogenies of a given type between abelian varieties,
called \emph{modular equations}, can be used to compute isogenies in every
dimension. In the case of Jacobians of genus~2 curves, we describe the
resulting algorithm completely.  Let us state a simplified version of our main
result (\cref{thm:main_proved}) in the case of~$\ell$-isogenies (of
degree~$\ell^2$) where~$\ell$ is a prime, described by modular equations of
Siegel type~\cite{brokerModularPolynomialsGenus2009,
  milioQuasilinearTimeAlgorithm2015}.

\begin{thm}
  \label{thm:main}
  Let~$\ell$ be a prime, and let~$k$ be a field such that
  $\chr k = 0$ or $\chr k > 8\ell+ 1$. Then, given the data of
  \begin{enumerate}
  \item two generic~$\ell$-isogenous p.p.~abelian surfaces~$A$ and~$A'$
    over~$k$, and
  \item the derivatives of modular equations of Siegel type and level~$\ell$
    at~$(A,A')$,
  \end{enumerate}
  one can compute an~$\ell$-isogeny~$\isog\from A\to A'$.  This algorithm
  costs~$\Otilde(\ell)$ elementary operations and~$O(1)$ square roots in~$k$.
\end{thm}

We also obtain a similar result (\cref{thm:proved-main-hilbert}) for cyclic
isogenies between p.p. abelian surfaces with real multiplication. The algorithm
is then based on modular equations of Hilbert type~\cite{
  martindaleHilbertModularPolynomials2020,
  milioModularPolynomialsIlbert2020}. Note that, as in the case of elliptic
curves, computing roots of modular equations (over finite fields in particular)
is a typical way of generating suitable input for our isogeny algorithms.

\subsection{Comparison with previous works}

Other polynomial-time algorithms to compute an isogeny $\isog\from A\to A'$
exist, in every dimension~$g$. For instance, one could compute $k$-rational
subgroups of the $\ell$-torsion group $A[\ell]$ and apply an algorithm to
compute quotient isogenies. However, the torsion subgoups~$A[\ell]$ are
difficult to manipulate as~$\ell$ grows, due to their large
size~$\ell^{2g}$. In another direction, for abelian surfaces specifically, van
Wamelen~\cite{vanwamelenPoonenQuestionConcerning2000,
  vanwamelenComputingAnalyticJacobian2006} describes an isogeny algorithm
using complex approximations; these ideas were later generalized to Jacobians
of arbitrary dimensions in~\cite{costaRigorousComputationEndomorphism2019}.
However, this numerical approach is inherently restricted to subfields of~$\C$
and lacks clear complexity estimates.
In comparison, the isogeny algorithm of \cref{thm:main} reconstructs the tangent
map of the isogeny exactly, and is extremely
efficient. Its practical cost is hidden in the evaluation of modular equations
and their derivatives, but these evaluations are still less costly than
manipulating the full torsion subgroups, both in the case of elliptic
curves~\cite{engeComputingModularPolynomials2009,
  sutherlandEvaluationModularPolynomials2013} and p.p.~abelian
surfaces~\cite{kiefferEvaluatingModularEquations2022}. In fact, computing
$\ell$-isogenies provides an efficient way of obtaining maximal isotropic
subgroups in~$A[\ell]$. This remark is at the heart of the
Schoof--Elkies--Atkin (or SEA) point-counting
algorithm~\cite{schoofEllipticCurvesFinite1985} for elliptic curves over
finite fields. In genus~$2$, one can similarly obtain asymptotic speedups over
point-counting methods that only rely on kernels of endomorphisms to construct
rational subgroups~\cite{gaudryCountingPointsGenus2011,
  gaudryGenusPointCounting2012}: we refer
to~\cite{kiefferCountingPointsAbelian2022} for a detailed analysis.

\subsection{Outline of the algorithm}

From a geometric point of view, we compute $\ell$-isogenies in arbitrary
dimension~$g$ as follows. Denote by~$\AAgl$ the moduli stack of p.p.~abelian
schemes of dimension~$g$ endowed with the kernel of an $\ell$-isogeny, and
by~$\AAg$ the moduli stack of p.p.~abelian schemes of dimension~$g$. Consider
the map
\begin{align*}
  \PPHI_{\ell} = (\PPHI_{\ell,1}, \PPHI_{\ell,2}) \from \AAgl &\to \AAg \times \AAg\\
  (A,K)&\mapsto (A, A/K).
\end{align*}
Both~$\PPHI_{\ell,1}$ and~$\PPHI_{\ell,2}$ are étale maps. Let
$\isog\from \av\to \av'$ be an $\ell$-isogeny, and let~$x,x'$ be the points
of~$\AAg$ corresponding to~$A$ and~$A'$. Then the \emph{Kodaira--Spencer
  isomorphism} between~$T_x(\AAg)$ and~$\Sym^2 T_0(\av)$ yields a close
relation between two maps:
\begin{itemize}
\item the \emph{deformation map}
  $\defo(\isog) \defi d {\PPHI_{\ell,2}} \circ d {\PPHI_{\ell,1}}^{-1} \from
  T_x(\AAg) \to T_{x'}(\AAg)$, and
\item the \emph{tangent map} $d\isog\from T_0(\av)\to T_0(\av')$.
\end{itemize}
Therefore, in any dimension~$g$, an isogeny algorithm could run as
follows.
\begin{enumerate}
\item \label{step:defo} Compute the deformation map by differentiating
  certain modular equations giving a local model of~$\AAgl$
  and~$\AAg$.
\item \label{step:tangent} Compute~$d\isog$ from the deformation map
  by using an explicit version of the Kodaira--Spencer isomorphism.
\item \label{step:formal} Finally, compute~$\isog$ by solving a differential
  system in the formal group of~$\av$ and performing a rational reconstruction,
  as in~\cite{couveignesComputingFunctionsJacobians2015,
    costaRigorousComputationEndomorphism2019}.
\end{enumerate}
The whole method, when applied to elliptic curves, is indeed a reformulation of
Elkies's isogeny algorithm.

In practice, working with stacks would involve adding a level structure and
keeping track of automorphisms, which is not computationally
convenient. Therefore, in order to make everything explicit in the case~$g=2$,
we replace the stack~$\AAtwo$ by its coarse moduli scheme~$\Atwo$. We even work
up to birationality, by considering the map from~$\Atwo$ to~$\Avar^3$ defined
by the three Igusa invariants~$(j_1,j_2,j_3)$. These modifications simplify the
computations considerably, but have the drawback of introducing the genericity
assumptions in \Cref{thm:main}. In particular, we only consider abelian
surfaces~$A$ that are the Jacobian of a genus~$2$ curve~$\Crv$.

Working with genus~$2$ curves allows us to encode a basis of~$T_0(\av)$ in the
choice of an equation of~$\Crv$. Then, the explicit Kodaira--Spencer
isomorphism of Step~\eqref{step:tangent} is simply an expression for certain
Siegel modular functions, namely the derivatives of the Igusa invariants, in terms
of the coefficients of the curve equation. We compute these formulas building
on work of Cléry, Faber, and van der
Geer~\cite{cleryCovariantsBinarySextics2017}: see
\cref{thm:vector-identification}. This result of independent interest
generalizes the classical formula
\begin{displaymath}
  \frac{1}{2\pi i}\, \frac{dj}{d\tau} = -\frac{E_4^2 E_6}{\Delta}
\end{displaymath}
used in Elkies's isogeny algorithm for elliptic curves.

Finally, in Step~\eqref{step:formal}, we use the fact that~$\Crv$ embeds in its
Jacobian to compute with power series in one variable only, and use Newton
iterations to solve the differential system in quasi-linear time. The
hypothesis on $\chr k$ appears in this step, but is not essential: a standard
workaround in small characteristic would be to lift the isogeny to
characteristic zero, following~\cite{eidFastComputationHyperelliptic2021}.

\subsection{Organization of the paper}

In \Cref{sec:mf,sec:cov}, we work over $\C$: \Cref{sec:mf} is devoted to the
necessary background on modular forms and isogenies, and \Cref{sec:cov} is
devoted to the explicit Kodaira--Spencer isomorphism.  In \Cref{sec:moduli}, we
adopt the language of algebraic stacks to show that the calculations over~$\C$
remain in fact valid over any base.  We present the computation of the isogeny
from its tangent map in \Cref{sec:alg}, and review the whole algorithm in
\Cref{sec:summary}.  Finally, in \Cref{sec:Qr5}, we present variants in
the algorithm in the case of real multiplication by $\Q(\sqrt{5})$ and compute
an example of cyclic isogeny of degree~$11$.

\subsection{Acknowledgements}

A.P. and D.R. were supported by the ANR grant CIAO (French Agence Nationale de
la Recherche, number ANR-19-CE48-0008.) J.K. was supported by CIAO and
the Simons Foundation grant 550031 (to Noam D.~Elkies.)

\section{Background on modular forms and isogenies}
\label{sec:mf}

We present the basic facts about Siegel and Hilbert modular forms only in the
genus~$2$ case. References for this section
are~\cite{vandergeerSiegelModularForms2008} for Siegel modular forms,
and~\cite{bruinierHilbertModularForms2008} for Hilbert modular forms, where
the general case is treated.

We write~$4\times 4$ matrices in block notation using~$2\times 2$
blocks.  We write~$m^\tp$ for the transpose of a matrix~$m$, and use
the notations
\begin{displaymath}
  m^{-\tp} \defi (m^{-1})^\tp, \qquad \Diag(x,y) \defi \mat{x}{0}{0}{y}.
\end{displaymath}

\subsection{Siegel modular forms}
\label{subsec:siegel}

Denote by~$\Half_2$ the set of complex symmetric~$2\times 2$ matrices
with positive definite imaginary part. 
For every $\tau\in\Half_2$, the quotient
\begin{displaymath}
  A(\tau) \defi \C^2 / \Lambda(\tau) \quad \text{where}\quad
  \Lambda(\tau) = \Z^2 \oplus \tau\Z^2
\end{displaymath}
is naturally endowed with the structure of a principally polarized (p.p.)
abelian surface over~$\C$. A basis of $\Omega^1(A(\tau))$ is given by
\begin{displaymath}
  \omega(\tau) \defi (2\pi i\, dz_1,2\pi i\, dz_2)
\end{displaymath}
where $z_1, z_2$ are the coordinates on~$\C^2$.

The symplectic group~$\Sp_{4}(\Z)$ acts on~$\Half_{2}$ as follows:
for $\gamma = \tmat{a}{b}{c}{d}\in\Sp_{4}(\Z)$ and $\tau\in \Half_2$, we write
\begin{displaymath}
  \gamma\tau \defi (a\tau + b)(c\tau + d)^{-1}.
\end{displaymath}

The quotient space~$\Atwo(\C) = \Sp_4(\Z)\setdiv\Half_2$ is the set of complex
points of the coarse moduli space~$\Atwo$ mentioned in the introduction: for
every p.p.~abelian surface $A$ over~$\C$, there exists~$\tau\in\Half_2$, unique
up to the action of~$\Sp_4(\Z)$, such that~$A$ and~$A(\tau)$ are
isomorphic~\cite[Prop.~8.1.3]{birkenhakeComplexAbelianVarieties2004}.
For~$\gamma\in \Sp_4(\Z)$ as above, the linear map $z\mapsto (c\tau+d)^{-t}z$
yields an isomorphism
$A(\tau) \to
A(\gamma\tau)$~\cite[Rem.~8.1.4]{birkenhakeComplexAbelianVarieties2004}

Let~$\rho\from \GL_{2}(\C)\to\GL(V)$ be a finite-dimensional and irreducible
holomorphic representation of~$\GL_2(\C)$. A \emph{Siegel modular function} of
weight~$\rho$ is a meromorphic map~$f\from\Half_{2}\to V$ satisfying the
transformation rule
\begin{displaymath}
  f(\gamma\tau) = \rho(c\tau + d)f(\tau).
\end{displaymath}
for all $\gamma = \tmat{a}{b}{c}{d}\in\Sp_{4}(\Z)$ and~$\tau\in \Half_2$.
We say that~$f$ is \emph{scalar-valued} if~$\dim V=1$, and
\emph{vector-valued} otherwise.  A \emph{Siegel modular form} is a holomorphic
Siegel modular function.

If~$A$ is a p.p.~abelian surface over~$\C$ endowed with a basis~$\omega$ of
$\Omega^1(A)$ and~$f$ is a Siegel modular form of weight~$\rho$, then one can
evaluate~$f$ on the pair $(A,\omega)$:
see~\cite[p.\,141]{faltingsDegenerationAbelianVarieties1990} or
§\ref{subsec:moduliav} for a geometric interpretation of this fact. To compute
$f(A,\omega)$, choose~$\tau\in\Half_2$ and an
isomorphism~$\eta \from A \to A(\tau)$.  Let~$r\in\GL_2(\C)$ be the matrix of
the pullback map~$\eta^*\from \Omega^1(A(\tau))\to\Omega^1(A)$ in the
bases~$\omega(\tau)$ and $\omega$.  Then
\begin{displaymath}
  f(A,\omega) = \rho(r) f(\tau).
\end{displaymath}
One can directly check that~$f(A,\omega)$ does not depend on the choice
of~$\tau$ and~$\eta$.

\subsection{An explicit view on Siegel modular forms in genus 2}
\label{subsec:siegel-g2}

In genus~$2$, the possible weights of Siegel modular forms can be listed
explicitly: each representation~$\rho$ as above is isomorphic to
$\det^k\otimes\Sym^n$ for some~$k\in \Z$
and~$n\geq 0$~\cite[Prop.~15.47]{fultonRepresentationTheoryFirst1991}. We will
omit the tensor symbol.  Explicitly,~$\Sym^n$ is a representation on the vector
space~$V = \C_n[x]$ of polynomials of degree at most~$n$, and for all
$E\in \C_n[X]$ and $r = \tmat{a}{b}{c}{d}\in \GL_2(\C)$, we have
\begin{displaymath}
  \Sym^n(r)\, E = (bx + d)^n\ 
  E\left(\frac{ax + c}{bx + d}\right).
\end{displaymath}
We take $(x^n,\ldots,x,1)$ as the standard basis of~$\C_n[x]$, so that we can
write an endomorphism of~$\C_n[x]$ as a matrix. In particular we have
\begin{displaymath}
  \Sym^2(r) =
  \left(
    \begin{matrix}
      a^2 & ab & b^2 \\
      2ac & ad+bc & 2bd \\
      c^2 & cd & d^2
    \end{matrix}
  \right).
\end{displaymath}

The weight of a nonzero scalar-valued Siegel modular form~$f$ is of the
form~$\det^k$ for a unique $k\in\Z$, and in fact $k\geq 0$. We also say
that~$f$ is a scalar-valued Siegel modular form of \emph{weight~$k$}.
Writing~$\Sym^n$ as a representation on~$\C_n[x]$ allows us to multiply Siegel
modular forms. Thus, the graded vector space generated by Siegel modular forms
is also naturally a graded $\C$-algebra, called the \emph{graded algebra of
  Siegel modular forms.}\footnote{Under our definitions, not all elements of
  this graded algebra are modular forms: for instance, if~$f_1$ and~$f_2$ are
  nonzero modular forms of distinct weights, then $f_1+f_2$ is not a modular
  form.}

In order to represent a modular form explicitly, we use Fourier expansions.
Let~$f$ be a Siegel modular form on~$\Half_2$ of weight~$\det^k\Sym^n$,
with underlying vector space~$V = \C^{n+1}$.  If we write
\begin{displaymath}
  \tau = \mat{\tau_1}{\tau_2}{\tau_2}{\tau_3}
  \quad \text{and} \quad
  q_j = \exp(2\pi i \tau_j)\quad \text{for } 1\leq j\leq 3,
\end{displaymath}
then~$f$ has a Fourier expansion of the form
\begin{displaymath}
  f(\tau) = \sum_{n_1,n_2,n_3\in\Z} c_f(n_1,n_2,n_3)\,
  q_1^{n_1} q_2^{n_2} q_3^{n_3}.
\end{displaymath}
The Fourier coefficients~$c_f(n_1,n_2,n_3)$ belong to~$V$, and can be nonzero
only when $n_1\geq 0, n_3\geq 0$ and~$n_2^2\leq 4 n_1 n_3$ (note that~$n_2$
can still be negative). To compute with~$q$-expansions, we work in the power
series ring~$\C[q_2,q_2^{-1}][[q_1, q_3]]$ modulo an ideal of the
form~$\bigl(q_1^\prc, q_3^\prc\bigr)$ for some precision~$\prc\geq 0$.

Now we can describe the structure of the graded~$\C$-algebra of Siegel modular
forms. While the full algebra is not finitely
generated~\cite[Lem.~4]{vandergeerSiegelModularForms2008}, the subalgebra of
scalar-valued modular forms is.

\begin{thm}[{\cite{igusaSiegelModularForms1962,
    igusaModularFormsProjective1967}}]
  \label{thm:siegel-structure}
  The graded $\C$-algebra of scalar-valued even-weight Siegel modular
  forms in genus~$\,2$ is generated by four algebraically independent
  elements~$\psi_4, \psi_6, \chi_{10}$, and $\chi_{12}$ of respective
  weights~$4,\, 6,\, {10},\, {12}$, and~$q$-expansions
  \begin{displaymath}
    \begin{aligned}
      \psi_4(\tau) &= 1 + 240(q_1 + q_3) \\
      & \quad  + \bigl(240 q_2^2 + 13440
      q_2 + 30240 + 13340 q_2^{-1} + 240 q_2^{-2} \bigr) q_1 q_3 +
      O\bigl(q_1^2,q_3^2 \bigr), \\
      \psi_6(\tau) &= 1 - 504(q_1 + q_3) \\
      & \quad + \bigl(-504 q_2^2 + 44352 q_2 +
      166320 + 44352 q_2^{-1} - 504 q_2^{-2}\bigr) q_1q_3 + O\bigl(q_1^2,q_3^2 \bigr), \\
      \chi_{10}(\tau) &= \bigl(q_2 - 2 + q_2^{-1}\bigr) q_1 q_3 + O(q_1^2,q_3^2),
      \\
      \chi_{12}(\tau) &= \bigl(q_2 + 10 + q_2^{-1}\bigr) q_1 q_3 + O\bigl(q_1^2,q_3^2 \bigr).
      \\
    \end{aligned}
  \end{displaymath}
  The graded~$\C$-algebra of scalar-valued Siegel modular forms in genus~$2$ is
  \begin{displaymath}
    \C[\psi_4,\psi_6,\chi_{10},\chi_{12}]
    \oplus \chi_{35} \C[\psi_4,\psi_6,\chi_{10},\chi_{12}] 
  \end{displaymath}
  where $\chi_{35}$ is a modular form of weight~$35$ and
  $q$-expansion
  \begin{displaymath}
    \chi_{35}(\tau) = q_1^2 q_3^2 (q_1 - q_3) (q_2 - q_2^{-1}) + O(q_1^4, q_3^4).
  \end{displaymath}
\end{thm}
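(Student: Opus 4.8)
The plan is to follow Igusa's original route: build the four even generators and the odd form $\chi_{35}$ explicitly from theta constants, read off their Fourier expansions directly from those formulas, and then pin down the ring structure by matching a Hilbert--Poincar\'e series against a dimension formula for $M_k(\Sp_4(\Z))$. Concretely, I would start from the ten even genus-$2$ theta constants $\theta_m(\tau)$, each a holomorphic function on $\Half_2$ of weight $1/2$ for a suitable congruence subgroup of $\Sp_4(\Z)$, with the full group $\Sp_4(\Z)$ permuting the ten characteristics up to eighth roots of unity (this follows from the transformation formula of the Riemann theta function, compatibly with \cref{prop:H2-isom,thm:siegel-unif}). Symmetric functions of the $\theta_m$ kill the multipliers: I would set $\psi_4$ to be a constant times $\sum_m \theta_m^8$ (weight $4$), $\psi_6$ a weighted sum $\sum \pm (\theta_{m_1}\theta_{m_2}\theta_{m_3})^4$ over the triples of even characteristics whose sum vanishes (weight $6$), $\chi_{10}$ a constant times the product $\prod_m \theta_m^2$ of all ten squared theta constants (weight $10$, a cusp form vanishing on the decomposable locus), $\chi_{12}$ a suitable weight-$12$ symmetric combination of products of six $\theta_m^4$, and $\chi_{35}$ a constant times a product of differences $\theta_i^2 - \theta_j^2$ (weight $35$, odd); by \cref{prop:irr-rep} these are all genuine scalar-valued forms of the stated weights for $\Sp_4(\Z)$. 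Computing the $q$-expansions is then a routine if slightly tedious substitution: each $\theta_m(\tau)$ expands as a series in $q_1^{1/2}, q_2^{\pm 1/2}, q_3^{1/2}$, and multiplying out and collecting low-order terms yields the stated expansions, simultaneously fixing the normalizing constants so that $\psi_4,\psi_6$ have constant term $1$ and $\chi_{10},\chi_{12},\chi_{35}$ have the indicated leading cusp terms.

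Next I would check that $\psi_4,\psi_6,\chi_{10},\chi_{12}$ are algebraically independent over $\C$. It suffices to show that the three weight-zero modular functions $\psi_4^3/\chi_{12}$, $\psi_6^2/\chi_{12}$, $\chi_{10}^6/\chi_{12}^5$ are algebraically independent in the function field of $\Atwo(\C)$, which has transcendence degree $3$; this follows by checking that their Jacobian with respect to the coordinates $\tau_1,\tau_2,\tau_3$ on $\Half_2$ does not vanish identically, which one reads off from the leading $q$-expansion terms just computed. Hence $\C[\psi_4,\psi_6,\chi_{10},\chi_{12}]$ is a polynomial ring, with Hilbert--Poincar\'e series $\bigl((1-t^4)(1-t^6)(1-t^{10})(1-t^{12})\bigr)^{-1}$.

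The crux, and the step I expect to be the real obstacle, is to prove that this subring exhausts the even-weight forms. The route special to genus $2$ goes through the Torelli map: on the complement of the locus of decomposable abelian surfaces, $\Atwo(\C)$ is the coarse space of smooth genus-$2$ curves, equivalently of binary sextics modulo $\GL_2$, and after the standard normalization (which turns the weight of a modular form into the degree of an invariant) an even-weight Siegel modular form becomes an invariant of binary sextics. Classical invariant theory of the binary sextic (Clebsch) shows that ring is generated by four algebraically independent invariants in degrees matching the weights $4,6,10,12$ — the Igusa--Clebsch invariants — together with one more invariant whose square is a polynomial in those four, matching $\chi_{35}$ on the modular side. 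One then has to verify that the theta-constant forms above are constant multiples of the Igusa--Clebsch invariants, and to control the decomposable locus $\chi_{10}=0$ that Torelli omits: there $\chi_{10}$ vanishes simply, so any form regular on all of $\Atwo$ and rational in the generators on the open part extends across it. Alternatively one can bypass invariant theory and compute $\dim M_k(\Sp_4(\Z))$ directly by Hirzebruch--Riemann--Roch together with Kodaira vanishing on a smooth toroidal compactification of $\Atwo$, keeping track of the boundary divisor and the orbifold locus; either way one obtains $\sum_{k\ \mathrm{even}} \dim M_k\, t^k = \bigl((1-t^4)(1-t^6)(1-t^{10})(1-t^{12})\bigr)^{-1}$, which combined with the previous paragraph forces $M_*^{\mathrm{even}} = \C[\psi_4,\psi_6,\chi_{10},\chi_{12}]$.

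Finally, for the odd-weight part, multiplication by $\chi_{35}$ is an isomorphism from even-weight to odd-weight forms. One direction is immediate. Conversely, if $f$ has odd weight, then $\chi_{35}f$ has even weight, hence lies in $\C[\psi_4,\psi_6,\chi_{10},\chi_{12}]$, and it is divisible there by $\chi_{35}$ because $f$ vanishes on the zero divisor of $\chi_{35}$: that divisor consists of loci in $\Half_2$ fixed pointwise by order-two elements $\gamma \in \Sp_4(\Z)$ along which $\det(c\tau+d) = -1$, so every odd-weight form vanishes there, while $\chi_{35}$ vanishes to order exactly one. Hence $f/\chi_{35}$ is holomorphic of even weight, giving $M_* = \C[\psi_4,\psi_6,\chi_{10},\chi_{12}] \oplus \chi_{35}\,\C[\psi_4,\psi_6,\chi_{10},\chi_{12}]$ as claimed, and the $q$-expansion of $\chi_{35}$ follows from its theta-product description.
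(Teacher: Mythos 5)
The paper does not prove this statement: it is quoted directly from Igusa's papers, with only a remark that the $q$-expansions follow from theta-constant expressions and a warning about normalizations. Your outline is a faithful reconstruction of Igusa's original argument (theta-constant constructions of the generators, passage to invariants of binary sextics via Torelli with control of the decomposable locus $\chi_{10}=0$, and the divisibility-by-$\chi_{35}$ argument for odd weights), and it is sound at the level of a sketch. One small slip: a product of the $\binom{10}{2}=45$ differences $\theta_i^2-\theta_j^2$ would have weight $45$, not $35$, so your proposed formula for $\chi_{35}$ cannot be literally right — though the existence of a theta-constant expression for $\chi_{35}$ is classical and nothing in the rest of your argument depends on its precise form.
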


The $q$-expansions in \Cref{thm:siegel-structure} are easily computed from
expressions in terms of theta functions
\cite[§7.1]{strengComputingIgusaClass2014},
\cite[p.\,493]{bolzaDarstellungRationalenGanzen1887}, and their Fourier
coefficients are integers. We warn the reader that different normalizations
appear in the literature: for instance, our~$\chi_{10}$ is~$4$ times the
modular form~$\chi_{10}$ appearing in Igusa's papers, and our~$\chi_{12}$
is~$12$ times Igusa's~$\chi_{12}$.

The equality~$\chi_{10}(\tau) = 0$ occurs exactly when~$A(\tau)$ is isomorphic
to a product of elliptic curves (with the product polarization).
When~$\chi_{10}(\tau) \neq 0$, the p.p.~abelian surface~$A(\tau)$ is isomorphic to the Jacobian of a hyperelliptic
curve. Following~\cite[§2.1]{strengComputingIgusaClass2014} and our choice of
normalizations, we define the \emph{Igusa invariants} to be
\begin{displaymath}
  j_1 \defi 2^{-8} \frac{\psi_4\psi_6}{\chi_{10}},\quad j_2 \defi
  2^{-5} \frac{\psi_4^2\chi_{12}}{\chi_{10}^2}, \quad
  j_3 \defi 2^{-14} \frac{\psi_4^5}{\chi_{10}^2}.
\end{displaymath}
The Igusa invariants $j_1,j_2,j_3$ are Siegel modular functions of weight~$0$,
and together define a birational map $\Atwo(\C)\to\C^3$.

\begin{rem}
  \label{rem:invariants_bielliptic}
  Generically, giving~$(j_1, j_2, j_3)\in \C^3$ uniquely specifies an
  isomorphism class of p.p.~abelian surfaces over~$\C$. This correspondence
  only holds on an open set: the Igusa invariants are not defined on products
  of elliptic curves, and do not represent a unique isomorphism class
  when~$\psi_4 = 0$. To consider these points nonetheless, it is best to use
  other invariants: for instance the invariants
  \begin{displaymath}
    h_1 \defi \dfrac{\psi_6^2}{\psi_4^3},\quad h_2 \defi
    \dfrac{\chi_{12}}{\psi_4^3}, \quad h_3 \defi \dfrac{\chi_{10}\psi_6}{\psi_4^4}
  \end{displaymath}
  are generically well-defined on products of elliptic curves.  See
  \cite[Thm.~1.V]{liuCourbesStablesGenre1993} for the expression of these
  invariants in terms of $j(E_1)+j(E_2)$ and $j(E_1)j(E_2)$ when evaluated on a
  product $E_1 \times E_2$.
\end{rem}

We conclude this paragraph by describing key examples of vector-valued
forms. First, if~$f$ is a Siegel modular function of weight~$0$, then its
derivative
\begin{displaymath}
  Df \defi \frac{1}{2\pi i} \Bigl(\dfrac{\partial
    f}{\partial\tau_1} x^2 + \dfrac{\partial
    f}{\partial\tau_2} x + \dfrac{\partial
    f}{\partial\tau_3} \Bigr): \Half_2\to \C_2[x]
\end{displaymath}
is a Siegel modular function of weight~$\Sym^2$. This property stems from the
existence of the Kodaira--Spencer isomorphism; it can also be seen as a special
case of Rankin--Cohen operators~\cite[§25]{vandergeerSiegelModularForms2008},
or be checked directly by differentiating the
relation~$f(\gamma\tau) = f(\tau)$ with respect to~$\tau$.

The second key example is the modular form~$\chi_{6,8}$ of
weight~$\det^8\Sym^6$~\cite{ibukiyamaVectorvaluedSiegelModular2012,
  cleryCovariantsBinarySextics2017}, with Fourier expansion
\begin{displaymath}
  \begin{aligned}
    \chi_{6,8}(\tau) = & \quad \left((4 q_2^2 - 16 q_2 + 24 - 16
      q_2^{-1} + 4 q_2^{-2}) q_1^2 q_3 + \cdots\right) x^6 \\
    & + \left((12 q_2^2 - 24 q_2 + 24 q_2^{-1} - 12 q_2^{-2}) q_1^2
      q_3 + \cdots \right) x^5 \\
    & + \left((- q_2 + 2 - q_2^{-1}) q_1 q_3 + \cdots \right) x^4 \\
    & + \left((-2 q_2 + 2 q_2^{-1}) q_1 q_3 + \cdots \right) x^3 \\
    & + \left((-q_2 + 2 - q_2^{-1}) q_1 q_3 + \cdots \right) x^2 \\
    & + \left((12 q_2^2 - 24 q_2 + 24 q_2^{-1} - 12 q_2^{-2}) q_1
      q_3^2 + \cdots \right) x \\
    & + \left((4 q_2^2 - 16 q_2 + 24 - 16 q_2^{-1} + 4 q_2^{-2})q_1
      q_3^2 + \cdots \right).
  \end{aligned}
\end{displaymath}
The modular form $\chi_{6,8}$ is in a sense ``universal'', as it provides a
link with equations of genus 2 curves: see \cref{sec:cov}.

\subsection{Hilbert modular forms}
\label{subsec:hilbert}

In the context of Hilbert surfaces and abelian surfaces with real
multiplication, we consistently use the following notation:

\begin{center}
  \begin{tabular}{ccl}
    $\Half_1$ & & the upper half plane in~$\C$ \\
    $K$ & & a real quadratic number field (embedded in~$\R$)\\
    $\Delta$ & & the discriminant of~$K$, so that $K = \Q\bigl(\sqrt{\Delta}\bigr)$ \\
    $\Z_K$ & & the ring of integers in~$K$ \\
    $\Z_K^\dualv$ & & the trace dual of~$\Z_K$, in other words
                      $\Z_K^\dualv = 1/\sqrt{\Delta}\ \Z_K$ \\
    $x\mapsto \conj{x}$ & & real conjugation in~$K$ \\
    $\Sigma$ & & the embedding~$x\mapsto (x,\conj{x})$ from~$K$ to~$\R^2$ \\
    $\sigma$ & & the involution $(t_1,t_2) \mapsto (t_2,t_1)$ of~$\Half_1^2$.
  \end{tabular}
\end{center}
Finally, the Hilbert modular group~$\Gamma_K$ is defined as follows:
\begin{displaymath}
  \Gamma_K = \SL\bigl(\Z_K\oplus \Z_K^\dualv\bigr) = \left\{\mat{a}{b}{c}{d} \in\SL_2(K)
    \,\st\, a,d\in\Z_K,\, b\in \bigl(\Z_K^\dualv\bigr)^{-1},\, c\in \Z_K^\dualv
  \right\}.
\end{displaymath}

Let~$A$ be a p.p.~abelian surface. We denote by~$\End^{\dagger}(A)$ the set of
endomorphisms of~$A$ that are invariant under the Rosati involution
(see~\cite[§17]{milneAbelianVarieties1986} for a definition). A \emph{real
  multiplication structure} by~$\Z_K$ on~$A$ is an embedding
\begin{displaymath}
  \iota\from \Z_K\embeds \End^{\dagger}(A).
\end{displaymath}
We say that $A$ has \emph{real multiplication by~$\Z_K$} if it is endowed with
a real multiplication structure. We sometimes use this terminology when $\iota$
is not explicitly given: we then make an implicit choice of a real multiplication
embedding.

As in the Siegel case, the coarse moduli space of p.p.~abelian surfaces
over~$\C$ with real multiplication by~$\Z_K$ can be constructed
complex-analytically. For each $t = (t_1,t_2)\in\Half_1^2$, the complex torus
\begin{displaymath}
  A_K(t) \defi \C^2/\Lambda_K(t) \quad\text{where} \quad
  \Lambda_K(t) = \Sigma\bigl(\Z_K^\dualv\bigr) \oplus \Diag(t_1,t_2)\,\Sigma\bigl(\Z_K\bigr)
\end{displaymath}
can be endowed with the structure of a p.p.~abelian surface over~$\C$, and
admits a real multiplication embedding~$\iota_K(t)$ given by multiplication
via~$\Sigma$. It is also endowed with the basis of differential forms
\begin{displaymath}
\omega_K(t) \defi (2\pi i\, dz_1, 2\pi i\, dz_2).
\end{displaymath}
The embedding~$\Sigma$ induces a map $\Gamma_K \embeds \SL_2(\R)^2$. The
group~$\Gamma_K$ thus acts on~$\Half_1^2$ by the usual action of~$\SL_2(\R)$
on~$\Half_1$ on each coordinate. The
quotient~$\Htwo(\C) = \Gamma_K\setdiv \Half_1^2$ is the moduli space we are
looking for: for each~$(A,\iota)$ as above, there exists~$t\in\Half_1^2$ such
that~$(A,\iota)$ is isomorphic to~$\bigl(A_K(t),\iota_K(t)\bigr)$, and~$t$ is
uniquely determined up to the action
of~$\Gamma_K$~\cite[§9.2]{birkenhakeComplexAbelianVarieties2004}. The
involution~$\sigma$ descends to~$\Htwo(\C)$ and exchanges the real
multiplication embedding with its conjugate.  In fact, the quotient~$\Htwo(\C)$
is the set of complex points of an algebraic variety~$\Htwo$ defined over~$\Q$,
called the \emph{Hilbert surface} attached to~$K$.

Let~$k_1, k_2\in \Z$. A \emph{Hilbert modular function} of weight $(k_1, k_2)$ is a
meromorphic function~$f\from\Half_1^2\to\C$ such that for all
$\gamma = \bigl(\begin{smallmatrix}a & b \\ c &
  d\end{smallmatrix}\bigr)\in\Gamma_K$ and all~$t\in\Half_1^2$,
\begin{displaymath}
  f(\gamma t) = \bigl(c\, t_1 + d\bigr)^{k_1}
  \bigl(\wideconj{c}\,t_2 + \conj{d}\bigr)^{k_2} f(t).
\end{displaymath}
Note that all irreducible finite-dimensional representations of~$\GL_1(\C)^2$ have
dimension~$1$, so there is no need to consider vector-valued forms.  We say
that~$f$ is \emph{symmetric} if~$f\circ\sigma = f$.  If~$f$ is nonzero and
symmetric, then its weight~$(k_1, k_2)$ is automatically \emph{parallel},
meaning~$k_1 = k_2$. A \emph{Hilbert modular form} is a holomorphic Hilbert
modular function.

\subsection{The Hilbert embedding}
\label{subsec:hilbert-siegel}

Forgetting the real multiplication structure yields a map
$\Htwo(\C)\to \Atwo(\C)$ from the Hilbert surface to the Siegel threefold. This
forgetful map comes from a linear map
$H\from \Half_1^2 \to \Half_2$
called the \emph{Hilbert embedding}, which we now describe explicitly. Let
$(e_1,e_2)$ be a $\Z$-basis of~$\Z_K$. To make a deterministic choice, we take
$e_1=1$ and $e_2 = \frac12({1 - \sqrt{\Delta}})$ (resp.~$e_2 = \sqrt{\Delta}$) when
$\Delta$ is~$1\bmod{4}$ (resp.~$0 \bmod{4}$). Set
$R = \tmat{e_1}{e_2}{\conj{e_1}}{\conj{e_2}}$,
and define
\begin{displaymath}
  H\from \Half_1^2\to \Half_2,\qquad t = (t_1,t_2)\mapsto R^\tp\, \Diag(t_1,t_2)\, R.
\end{displaymath}
Then, for every $t\in\Half_1^2$, the left multiplication by~$R^\tp$ on~$\C^2$
induces an isomorphism $A_K(t) \to A \bigl(H(t) \bigr)$
\cite[p.\,209]{vandergeerHilbertModularSurfaces1988}. Indeed we have
\begin{displaymath}
  \Lambda_K(t) = R^{-t}\Z^2 \oplus R^{-t} \bigl(R^t \Diag(t_1,t_2) R\bigr) \Z^2 = R^{-t}\Lambda(H(t)).
\end{displaymath}

The Hilbert embedding is compatible with the actions of the modular groups, as
follows.  Let~$\Gamma_K$ act on~$\Half_2$ by means of the morphism
$\Gamma_K\to \Sp_4(\Z)$ given by
\begin{displaymath}
  \mat{a}{b}{c}{d} \mapsto \mat{R^\tp}{0}{0}{R^{-1}}
  \mat{a^*}{b^*}{c^*}{d^*}
  \mat{R^{-\tp}}{0}{0}{R}
\end{displaymath}
where we write~$x^* = \Diag(x,\conj{x})$ for~$x\in K$. The Hilbert
embedding~$H$ is then equivariant for the actions of~$\Gamma_K$ on~$\Half_1^2$
and~$\Half_2$. The involution~$\sigma$ of~$\Half_1^2$ also corresponds via~$H$
to an element~$M_\sigma\in \Sp_4(\Z)$, namely
\begin{displaymath}
  M_\sigma = \mat{
    \begin{matrix}
      1&0\\ \delta &-1
    \end{matrix}
  }{(0)}{(0)}{
    \begin{matrix}
      1& \delta\\0&-1
    \end{matrix}
  }
\end{displaymath}
where~$\delta = 1$ if~$\Delta = 1\ \mathrm{mod}\ 4$, and~$\delta=0$ otherwise
\cite[Prop.~3.1]{lauterComputingGenusCurves2011}.

Using this compatibility, we can directly check that pulling back a Siegel
modular form via the Hilbert embedding yields Hilbert modular forms.

\begin{prop}
  \label{prop:mf-pullback}
  Let $k\in\Z$, $n\in\Z_{\geq 0}$, and let~$f \from \Half_2\to \C_n[x]$ be a
  Siegel modular form of weight~$\rho = \det^k\Sym^n$. Define the
  functions~$g_i\from\Half_1^2\to\C$ for $0\leq i\leq n$ by
  \begin{displaymath}
    \sum_{i=0}^n g_i(t)\, x^i =
    \rho(R) f \bigl(H(t) \bigr) \quad \text{for all } t\in \Half_1^2.
  \end{displaymath}
  Then each~$g_i$ for $0\leq i\leq n$ is a Hilbert modular form of
  weight~$(k+i,\, k+n-i)$, and we have $g_i\circ\sigma = g_{n-i}$. In
  particular, if $n=0$ and~$f$ is a scalar-valued Siegel modular form of
  weight~$\,\det^k$, then the function
  $H^*f\defby t\mapsto f \bigl(H(t) \bigr)$ is a symmetric Hilbert modular form
  of parallel weight~$(k,k)$.
\end{prop}

The image of the Hilbert embedding~$H$ in~$\Atwo(\C)$ is called the
\emph{Humbert surface} attached to~$K$. The pullback of~$\chi_{10}$ by the
Hilbert embedding is nonzero because a generic p.p.~abelian
surface over~$\C$ with real multiplication by~$\Z_K$ is not a product of two
elliptic curves~\cite[IX,
Prop.~1.2]{vandergeerHilbertModularSurfaces1988}. Moreover, the pullback
of~$\psi_4$ is nonzero, since its Fourier expansion as a Hilbert modular form
has a nonzero constant
term~\cite[Prop.~3.1]{lauterComputingGenusCurves2011}. As a consequence, the
Igusa invariants define a birational map from the Humbert surface to its image
in~$\C^3$. The squarefree polynomial cutting out this image is called the
\emph{Humbert equation}. This equation grows quickly in size with the
discriminant~$\Delta$, but can be computed in small
cases~\cite{gruenewaldComputingHumbertSurfaces2010}.

\subsection{Isogenies between abelian surfaces}
\label{subsec:isogenies}

Let~$A$ be a p.p.~abelian surface over~$k$. Denote its dual by~$A^\dualv$ and
its principal polarization by $\pi\from A\to A^\dualv$. For every line
bundle~$\Lb$ on~$A$, there is a morphism $\phi_{\Lb}\from A\to A^\dualv$
defined by $\phi_\Lb(x) = t_x^* \Lb\otimes\Lb^{-1}$, where~$t_x$ denotes
translation by~$x$ on~$A$. Let~$\NS(A)$ denote the Néron--Severi group of~$A$,
consisting of algebraic equivalence classes of line bundles. A fundamental fact
is that $\NS(A)$ is completely described in terms of endomorphisms of~$A$
over~$k$.

\begin{thm}[{\cite[Thm.~2 p.\,188, Thm.~3 p.\,231 and Application~III
    p.\,209]{mumfordAbelianVarieties1970}}]
  \label{thm:NS-End}
  For every $\xi\in\smash{\End^{\dagger}}(A)$, there exists a line
  bundle~$\Lb_A(\xi)$ (possibly defined over an extension of~$k$) such that
  $\phi_{\Lb_A(\xi)} = \pi\circ\xi$.  The map~$\xi \mapsto \Lb_A(\xi)$ induces
  an isomorphism of groups
  $(\smash{\End^{\dagger}}(A),+) \simeq (\NS(A),\otimes)$.
  The morphism $\phi_{\Lb_A(\xi)}$ is a polarization on~$A$ if and only
  if~$\xi\in \smash{\End^{\dagger}}(A)$ is totally positive.
\end{thm}
In this notation,~$\Lb_A(1)$ is the line bundle associated with the
polarization~$\pi$.

Now, let~$\isog:A\to A'$ be an isogeny between p.p.~abelian surfaces. The line
bundle $\isog^* \Lb_{A'}(1)$ defines another polarization on~$A$, hence is
algebraically equivalent to~$\Lb_A(\xi)$ for some totally
positive~$\xi\in \smash{\End^\dagger}(A)$. Provided that~$A$ is simple, there
are two possibilities~\cite[p.\,202]{mumfordAbelianVarieties1970}:
either~$\Q(\xi)=\Q$, in which case~$\xi$ is a positive integer; or~$\Q(\xi)$ is
a real quadratic field~$K$. For simplicity, we assume in this paper that~$\xi$
is a prime, and~$A$ has real multiplication by the maximal order~$\Z_K$ in the
latter case. These assumptions often hold in practice, and our techniques would
also apply with suitable modifications to more exotic cases.
Then~$\isog: A\to A'$ is an isogeny of one of the two following types.

\begin{defn}
  \label{def:beta-isog}
  Let~$k$ be a field, and let~$A, A'$ be p.p.~abelian surfaces over~$k$.
  \begin{enumerate}
  \item Let $\ell\in\Z_{\geq 0}$. An isogeny $\isog\from A\to A'$ is called an
    \emph{$\ell$-isogeny} if
    \begin{displaymath}
      \isog^* \Lb_{A'}(1) = \Lb_A({\ell}) \quad \text{in } \NS(A).
    \end{displaymath}
  \item Let~$K$ be a real quadratic field, and let $\beta\in\Z_K$ be a totally
    positive prime. Assume that~$A,A'$ have real multiplication by~$\Z_K$,
    given by embeddings~$\iota$ and~$\iota'$. An isogeny
    $\isog\from A\to A'$ is called a \emph{$\beta$-isogeny} if
    \begin{displaymath}
      \isog^* \Lb_{A'}(1) = \Lb_A({\,\iota(\beta)}) \quad\text{in }\NS(A)
    \end{displaymath}
    and the real multiplication embeddings $\iota$ and~$\iota'$ are compatible
    under~$\isog$, meaning that for all $\alpha\in\Z_K$, we have
    $ \isog\circ\iota(\alpha) = \iota'(\alpha)\circ\isog$.
  \end{enumerate}
\end{defn}

An~$\ell$-isogeny $\isog:A\to A'$ has degree~$\ell^2$; its kernel is a maximal
isotropic subgroup in the $\ell$-torsion subgroup~$A[\ell]$ for the Weil
pairing, and isomorphic to~$(\Z/\ell\Z)^2$ as an abstract group \cite[(1)
p.\,228 and Thm.~4 p.\,233]{mumfordAbelianVarieties1970}. In the real
multiplication case, $\beta$-isogenies are even smaller. The kernel of a
$\beta$-isogeny $\isog:A\to A'$ is maximal isotropic in~$A[\beta]$, thus
$\deg(\isog) = N_{K/\Q}(\beta)$, and~$\ker(\isog)$ is cyclic when the ideal
$(\beta)$ lies above a split prime in $K/\Q$.

Both~$\ell$- and $\beta$-isogenies are easily described over~$\C$. Up to
isomorphism, every $\ell$-isogeny is of the form
\begin{displaymath}
  A(\tau)\to A(\tau/\ell)
\end{displaymath}
(induced by the identity on~$\C^2$) for
some~$\tau\in
\Half_2$~\cite[Thm.~3.2]{brokerModularPolynomialsGenus2009}. Similarly, write
$t/\beta \defi \bigl(t_1/\beta, t_2/\conj{\beta}\bigr)$ for
$t = (t_1,t_2)\in\Half_1^2$. Then every $\beta$-isogeny is of the form
\begin{displaymath}
  \bigl(A_K(t),\iota_K(t)\bigr)\to \bigl(A_K(t/\beta),\iota_K(t/\beta)\bigr)
\end{displaymath}
for some choice of~$t$ \cite[Lem.~4.9]{martindaleHilbertModularPolynomials2020}.

\subsection{Modular equations}
\label{subsec:modpol}

Modular equations encode the presence of an isogeny between p.p.~abelian
surfaces, and generalize the classical modular polynomials that are widely used
to compute isogenies between elliptic curves.

In the Siegel case, let $\Gamma^0(\ell)\subset \Sp_4(\Z)$ be the subgroup
consisting of matrices whose upper right $2\times 2$ block is divisible
by~$\ell$, and consider the map
\begin{align*}
  \PHI_{\ell,\C}: \Gamma^0(\ell)\backslash \Half_2 &\to \Atwo(\C)\times \Atwo(\C)\\
  \tau &\mapsto (\tau, \tau/\ell).
\end{align*}
The map $\PHI_{\ell,\C}$ is the analytification of the map $\PPHI_\ell$
described in the introduction, which exists at the level of algebraic stacks
over~$\Q$. The Siegel modular equations are equations for the image
of~$\PHI_{\ell,\C}$ in $\C^3\times \C^3$ via the Igusa invariants; we consider
them as elements of~$\Q[J_1,J_2,J_3,J_1',J_2',J_3']$. Any such set of equations
would work in the context of the isogeny algorithm.  We can nonetheless define
the Siegel modular equations uniquely, using the fact that the extension of the
field $\C\bigl(j_1(\tau), j_2(\tau), j_3(\tau)\bigr)$ constructed by
adjoining~$j_1(\tau/\ell)$, $j_1(\tau/\ell)$, and~$j_3(\tau/\ell)$ is finite
and generated
by~$j_1(\tau/\ell)$~\cite[Lem.~4.2]{brokerModularPolynomialsGenus2009}.

\begin{defn}
  \label{def:modeq}
  Let $\ell$ be a prime. The \emph{Siegel modular equations of level~$\ell$}
  are the three following irreducible polynomials
  $\Psi_{\ell,1},\Psi_{\ell,2},\Psi_{\ell,3}\in \Q[J_1,J_2,J_3,J_1',J_2',J_3']$:
  \begin{itemize}
  \item $\Psi_{\ell,1} \in \Q[J_1,J_2,J_3,J_1']$ is the (non-monic) minimal
    polynomial of the function~$j_1(\tau/\ell)$ over
    $\C\bigl(j_1(\tau),j_2(\tau),j_3(\tau) \bigr)$.
  \item For $i\in\{2, 3\}$, we have
    $\Psi_{\ell,i}\in \Q[J_1,J_2,J_3,J_1',J_i']$, with
    $\deg_{J_i'}\Psi_{\ell,i} = 1$, and an equality of meromorphic functions
    \begin{displaymath}
      \Psi_{\ell,i}\bigl(j_1(\tau),j_2(\tau),j_3(\tau),j_1(\tau/\ell),j_i(\tau/\ell)\bigr) = 0.
    \end{displaymath}
  \end{itemize}
\end{defn}

In the Hilbert case, we let $\Gamma^0(\beta)\subset \Gamma_K$ be the subgroup
of matrices whose upper right entry $b$ lies in $\beta (\different)^{-1}$, and consider the map
\begin{align*}
  \PHI_{\beta,\C} : \Gamma^0(\beta)\backslash \Half_1^2 \to \Atwo(\C)\times \Atwo(\C)\\
  t \mapsto \bigl(H(t), H(t/\beta) \bigr).
\end{align*}
We call \emph{Hilbert modular equations of level~$\beta$} any set of three
irreducible polynomials $\Psi_{\beta,k}\in \Q[J_1,J_2,J_3,J_1',J_2',J_3']$ for
$1\leq k\leq 3$ which, together with the Humbert equation in~$\Q[J_1,J_2,J_3]$,
are equations for the image of $\PHI_{\beta,\C}$ in~$\C^3\times \C^3$ via the
Igusa invariants. One can adapt \cref{def:modeq} to also define the Hilbert
modular equations uniquely: see
\cite[Prop.~4.11]{milioModularPolynomialsIlbert2020} and
\cite[§3.2]{kiefferDegreeHeightEstimates2022}.

Since the Igusa invariants are symmetric by \Cref{prop:mf-pullback}, the
Hilbert modular equations encode~$\beta$- and $\conj{\beta}$-isogenies
simulaneously~\cite[Ex.~4.17]{milioModularPolynomialsIlbert2020}. It would be
better to consider modular equations in terms non-symmetric invariants;
however, we know of no explicit choice of such invariants in general.

From a practical point of view, modular equations in genus~$2$ are very large
polynomials. This is especially true for the Siegel modular equations of
level~$\ell$. For each~$1\leq k\leq 3$, the degree of~$\Psi_{\ell,k}$ in each
variable is~$O(\ell^3)$, and the height of the coefficients
is~$O(\ell^3\log\ell)$, for a total size
of~$O(\ell^{15}\log\ell)$~\cite{kiefferDegreeHeightEstimates2022}. The
situation is less desperate for Hilbert modular equations of level~$\beta$:
their total size is $O_K(\ell^{4} \log \ell)$ where~$\ell =
N_{K/Q}(\beta)$. Modular equations have only been computed in full (using
different invariants) up to $\ell = 7$ in the Siegel case, and up to
$N(\beta) = 97$ in the Hilbert case for
$K = \Q(\sqrt{2})$~\cite{milioDatabaseModularPolynomials2016}.

Luckily, directly evaluating modular equations and their derivatives at a given
point is much cheaper than writing them down in
full~\cite{kiefferEvaluatingModularEquations2022}: for example, over a prime
finite field~$\F_p$, the evaluation cost is only~$\Otilde(\ell^6\log p)$ and
$\Otilde(\ell^2\log p)$ binary operations for the Siegel and Hilbert modular
equations, respectively.  These evaluations are all we need to apply the
isogeny algorithm.

\section{Explicit Kodaira--Spencer over \texorpdfstring{$\C$}{C}}
\label{sec:cov}

In~§\ref{subsec:hyperelliptic}, we explain how a choice of genus~$2$ curve
equation~$\Crv_E\defby y^2 = E(x)$ over~$\C$ naturally encodes a basis~$\omega_E$ of
differential forms on the Jacobian of~$\Crv_E$. If~$f$ is a Siegel
modular form, this gives rise to a map
\begin{displaymath}
  \Cov(f)\defby E\mapsto f\bigl(\Jac(\Crv_E), \omega_E\bigr)
\end{displaymath}
Following~\cite{cleryCovariantsBinarySextics2017}, we show that~$\Cov(f)$ is a
polynomial in the coefficients of~$E$ in~§\ref{subsec:cov}. We describe an
algorithm to obtain this polynomial from the $q$-expansion of~$f$
in~§\ref{subsec:identification}, and apply it to the derivatives of the Igusa
invariants to obtain the explicit Kodaira--Spencer isomorphism.  This allows us
to compute the deformation map and the tangent map of a generic $\ell$-isogeny
over~$\C$ in~§\ref{subsec:norm-matrix}. Finally, we adapt these methods to the
Hilbert case in~§\ref{subsec:explicit-hilbert}.

\subsection{Genus~2 curve equations}
\label{subsec:hyperelliptic}

Let~$E\in \C_6[x]$ be a polynomial with six distinct roots
in~$\Pvar^1(\C)$ (hence $\deg(E)\in\{5,6\}$). We associate to~$E$ the
genus~$2$ curve
\begin{displaymath}
  \Crv_E: y^2 = E(x).
\end{displaymath}
We refer to~$E$ as a \emph{genus~$2$ curve equation}. Choosing~$E$ not only
specifies~$\Crv_E$ up to isomorphism: indeed,~$\Crv_E$ is also endowed with the
basis of differential forms
\begin{displaymath}
  \omega_E \defi \Bigl(\frac{x\,dx}{y},\frac{dx}{y}\Bigr).
\end{displaymath}
Any choice of base point $P$ on a genus~$2$ curve~$\Crv$ gives an embedding
$\eta_P \from \Crv \embeds \Jac(\Crv)$
sending~$Q$ to the divisor class $[Q-P]$.
Then~$\eta_P^*: \Omega^1(\Jac(\Crv)) \to \Omega^1(\Crv)$ is an isomorphism and
is independent
of~$P$~\cite[Prop.~5.3]{milneJacobianVarieties1986}. Throughout, we identify
$\Omega^1(\Jac(\Crv))$ and~$\Omega^1(\Crv)$ via this isomorphism, so that we
may also view~$\omega_E$ as a basis of differential forms
on~$\Jac(\Crv_E)$. The following lemma (a simple calculation:
see~\cite[§4]{cleryCovariantsBinarySextics2017}) justifies why our choice
of~$\omega_E$ is convenient.

\begin{lem}
  \label{lem:hyperell-isomorphism}
  Let~$E$ be a genus~$2$ curve equation, and let
    $r = \tmat{a}{b}{c}{d} \in \GL_2(\C)$.
  Let~$E' = \det^{-2}\Sym^6(r)\,E$, and let $\eta\from\Crv_{E'}\to\Crv_E$ be
  the isomorphism defined by
  \begin{displaymath}
    \eta(x,y) = \left( \dfrac{a x + c}{b x + d},\ \dfrac{(\det
        r)\,y}{(bx + d)^3} \right).
  \end{displaymath}
  Then the matrix of $\eta^*\from \Omega^1(\Crv_E)\to \Omega^1(\Crv_{E'})$
  in the bases~$\omega_E$ and~$\omega_{E'}$ is~$r$.
\end{lem}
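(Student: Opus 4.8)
The plan is to prove this by a direct computation. \textbf{Step 1: $\eta$ is a well-defined isomorphism $\Crv'\to\Crv$.} By the formula for the action of $\Sym^6$ recalled in §\ref{subsec:siegel-g2}, we have $E_{\Crv'}(x') = (\det r)^{-2}(bx'+d)^6\, E_\Crv\bigl(\frac{ax'+c}{bx'+d}\bigr)$. Substituting $u = \frac{ax'+c}{bx'+d}$ and $v = \frac{(\det r)\,y'}{(bx'+d)^3}$ into $v^2 = E_\Crv(u)$ and clearing denominators gives exactly $y'^{\,2} = E_{\Crv'}(x')$, so $\eta$ maps $\Crv'$ into $\Crv$; the map built from $r^{-1}$ by the same recipe is an inverse, so $\eta$ is an isomorphism. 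One also checks that $\Crv'$ is again a nonsingular genus-$2$ hyperelliptic equation — its defining polynomial stays squarefree of degree $5$ or $6$ because $\det^{-2}\Sym^6(r)$ multiplies the discriminant of $E_\Crv$ by a nonzero scalar — so that the basis $\omega(\Crv')$ is defined.

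\textbf{Step 2: pull back the differential forms.} From $u\circ\eta = \frac{ax'+c}{bx'+d}$ we get $\eta^*(du) = \frac{\det r}{(bx'+d)^2}\,dx'$, and dividing by $v\circ\eta = \frac{(\det r)\,y'}{(bx'+d)^3}$ yields $\eta^*\bigl(\frac{du}{v}\bigr) = \frac{(bx'+d)\,dx'}{y'} = b\cdot\frac{x'\,dx'}{y'} + d\cdot\frac{dx'}{y'}$; multiplying by $u\circ\eta$ then gives $\eta^*\bigl(\frac{u\,du}{v}\bigr) = \frac{(ax'+c)\,dx'}{y'} = a\cdot\frac{x'\,dx'}{y'} + c\cdot\frac{dx'}{y'}$. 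Reading these identities in the ordered basis $\omega(\Crv') = \bigl(\frac{x'\,dx'}{y'}, \frac{dx'}{y'}\bigr)$, the images of the two forms in $\omega(\Crv)$ have coordinate columns $(a,c)^\tp$ and $(b,d)^\tp$, i.e.\ the matrix of $\eta^*$ in the bases $\omega(\Crv),\omega(\Crv')$ is $\mat{a}{b}{c}{d} = r$, as claimed.

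There is no genuine obstacle here; the computation is short. The only points requiring care are keeping track of the $\det^{-2}$ twist so that the nonsingular genus-$2$ property is preserved — including the degenerate case where the Möbius substitution $u = \frac{ax'+c}{bx'+d}$ moves a branch point to or from infinity, which changes the degree of the model between $5$ and $6$ — and fixing the convention for ``the matrix of $\eta^*$ in these bases'' (columns record images of basis vectors) so that it comes out as $r$ rather than its transpose.
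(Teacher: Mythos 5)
Your proof is correct and follows essentially the same route as the paper's: the paper's entire argument is the "simple calculation" $\eta^*(du/v) = (bx'+d)\,dx'/y'$ and $\eta^*(u\,du/v) = (ax'+c)\,dx'/y'$, which is exactly your Step 2. Your Step 1 (checking that $\eta$ is well defined and that $\Crv'$ is again nonsingular) is a correct elaboration of details the paper leaves implicit.
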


By \Cref{lem:hyperell-isomorphism} and Torelli's theorem, if~$A$ is a
p.p.~abelian surface over~$\C$ that is not the product of two elliptic curves,
and if~$\omega$ be a basis of~$\Omega^1(A)$, then there exists a unique
genus~$2$ curve equation~$E$ such that the pairs
$\bigl(\Jac(\Crv_E),\omega_E \bigr)$ and~$(A,\omega)$ are isomorphic. We can
thus make the following definition.

\begin{defn}
  \label{def:std-curve}
  Let $\tau\in\Half_2$, and assume that $\chi_{10}(\tau)\neq 0$. We
  define~$E(\tau)$ to be the unique genus~$2$ curve equation such that
  \begin{displaymath}
    \bigl(\Jac(\Crv_{E(\tau)}), \omega_{E(\tau)} \bigr) \simeq
    \bigl(A(\tau),\omega(\tau) \bigr),
  \end{displaymath}
  and call it the \emph{standard curve equation} attached to~$\tau$. We define
  the meromorphic functions~$a_i(\tau)$ for $0\leq i\leq 6$ to be the
  coefficients of~$E(\tau)$:
  \begin{displaymath}
    E(\tau) = \sum_{i=0}^6 a_i(\tau) x^i.
  \end{displaymath}
\end{defn}

\begin{lem}
  \label{lem:E-as-smf}
  The function
  $\tau\mapsto E(\tau)$ is a vector-valued Siegel modular function of
  weight~$\det^{-2}\Sym^6$ which has no poles on the open set
  $\{\chi_{10}\neq 0\}$.
\end{lem}

\begin{proof}
  The function $\tau\mapsto E(\tau)$ is well-defined on~$\{\chi_{10}\neq 0\}$
  and is holomorphic on this open set. To prove the transformation rule,
  fix~$\gamma=\tmat{a}{b}{c}{d}\in \Sp_4(\Z)$ and ~$\tau\in \Half_2$ such that
  $\chi_{10}(\tau)\neq 0$. Let~$\eta:A(\tau)\to A(\gamma\tau)$ be the
  isomorphism $z\mapsto (c\tau+d)^{-t}z$. Then the matrix of
  $\eta^*: \Omega^1(A(\gamma\tau))\to \Omega^1(A(\tau))$ in the bases
  $\omega(\gamma\tau)$ and~$\omega(\tau)$ is~$(c\tau + d)^{-1}$. On the other
  hand, writing $E' = \det^{-2}\Sym^6(c\tau+d) E(\tau)$, we have an isomorphism
  $\eta': \Jac(\Crv_{E(\tau)}) \to \Jac(\Crv_{E'})$ such that the matrix
  of~$\eta'^*$ in the bases $\omega_{E'}$ and $\omega_{E(\tau)}$ is
  $(c\tau + d)^{-1}$ by \Cref{lem:hyperell-isomorphism}. Thus $E'$ satisfies
  the equality of \cref{def:std-curve} at~$\gamma\tau$, so
  $E(\gamma\tau) = E' = \det^{-2}\Sym^6(c\tau+d) E(\tau)$.
\end{proof}

\subsection{Covariants}
\label{subsec:cov}

Let~$f$ be a Siegel modular form of weight~$\rho$. The construction of
§\ref{subsec:hyperelliptic} yields an algebraic map
\begin{align*}
  \Cov(f): E &\mapsto f\bigl(\Jac(\Crv_E), \omega_E \bigr).
\end{align*}
The map~$\Cov(f)$ is then a \emph{covariant} of~$E$. These are classical
objects, studied in the 19th century by
Clebsch~\cite{clebschTheorieBinaerenAlgebraischen1872}. A more modern
reference for covariants is Mestre's
article~\cite{mestreConstructionCourbesGenre1991}. In light of
\Cref{lem:hyperell-isomorphism}, we use the following terminology.

\begin{defn}
  \label{def:cov}
  Let $\rho:\GL_2(\C)\to \GL(V)$ be a finite-dimensional holomorphic
  representation of~$\GL_2(\C)$ on a vector space~$V$. A \emph{fractional
    covariant} of weight~$\rho$ is a rational map
    $C\from \C_6[x] \to V$
  that satisfies the following transformation rule: for all $r\in\GL_2(\C)$
  and $E\in\C_6[x]$,
  \begin{displaymath}
    C\bigl(\det\nolimits^{-2}\Sym^6(r) \,E\bigr) =
    \rho(r)\, C(E).
  \end{displaymath}
  If $\dim V\geq 2$, then~$C$ is said to be \emph{vector-valued}, and otherwise
  \emph{scalar-valued}.  A \emph{covariant} is a fractional covariant that is
  also a polynomial map.
\end{defn}

It is enough to consider covariants of weight~$\det^k\Sym^n$, for $k\in\Z$ and
$n\in\Z_{\geq 0}$. As in the case of Siegel modular forms, multiplication of
polynomials allows us to consider (fractional) covariants as elements of a
graded~$\C$-algebra.  What we call a vector-valued covariant of weight
$\det^k\Sym^n$ is in Mestre's paper a covariant of order~$n$ and degree
$k + n/2$; what we call a scalar-valued covariant of weight~$\det^k$ is in
Mestre's paper an invariant of degree~$k$.

A precise correspondence between Siegel modular forms and covariants is
established in~\cite{cleryCovariantsBinarySextics2017} by studying how modular
forms and covariants extend to the toroidal compactification of $\Atwo$. We
reformulate some of these results as follows.

\begin{thm}[{\cite[§4 and §6]{cleryCovariantsBinarySextics2017}}]
  \label{thm:cov-mf}
  The map~$f\mapsto \Cov(f)$ induces a weight-preserving bijection between the
  graded algebras of Siegel modular functions and fractional covariants. Its
  inverse bijection is
  \begin{displaymath}
    C \mapsto \bigl(f: \tau\mapsto C(E(\tau))\bigr).
  \end{displaymath}
  Further, if~$f$ is a Siegel modular form, then~$\Cov(f)$ is a covariant.
  If~$f$ is a cusp form, then~$\Cov(f/\chi_{10})$ is a also a covariant.
\end{thm}

A second key input is the structure of the graded algebra of covariants which,
unlike the graded algebra of Siegel modular forms, is finitely generated.

\begin{thm}[{\cite[p.\,296]{clebschTheorieBinaerenAlgebraischen1872}}]
  \label{thm:cov-structure}
  The graded $\C$-algebra of covariants is generated by~$26$ elements
  defined over~$\Q$. The number of generators of weight $\det^k\Sym^n$
  is indicated in the following table:
\end{thm}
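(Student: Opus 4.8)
The plan is to invoke the classical symbolic method of invariant theory. Finite generation itself is not the difficulty: since $\SL_2(\C)$ is reductive, Hilbert's finiteness theorem already shows that the graded $\C$-algebra of covariants of the binary sextic is finitely generated, and Gordan's theorem supplies in addition an explicit algorithm producing a generating set. The content of the statement is therefore the precise count~$26$, the distribution of weights $\det^k\Sym^n$ among the generators, and the fact that they can be taken over~$\Q$.

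First I would set up the transvectant calculus. For covariants $C_1, C_2$ of weights $\det^{k_1}\Sym^{n_1}$ and $\det^{k_2}\Sym^{n_2}$ and an integer $0 \le r \le \min(n_1,n_2)$, the $r$-th transvectant $(C_1,C_2)_r$ is again a covariant, of weight $\det^{k_1+k_2+r}\Sym^{n_1+n_2-2r}$; this is just the Clebsch--Gordan decomposition of $\Sym^{n_1}\tens\Sym^{n_2}$ with the determinant twists accounted for. Starting from the sextic $f$ itself, which has weight $\det^{-2}\Sym^6$, one forms the even self-transvectants $(f,f)_2$, $(f,f)_4$, $(f,f)_6$ (the odd ones vanish as $(f,f)_r = (-1)^r(f,f)_r$), then transvectants of those with $f$ and with one another, and iterates, discarding at each stage any covariant that is already a polynomial in those retained so far. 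Gordan's termination criterion --- a covariant whose degree in the coefficients of $f$ exceeds an explicit bound is necessarily reducible --- guarantees that the process halts after finitely many rounds, and the covariants that survive form Clebsch's fundamental system (among them, five covariants of order~$0$, i.e.\ the classical invariants of the sextic). Their weights are read off from the transvectant rule as the computation proceeds, which yields the table.

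The delicate point, and historically the hard part of the theorem, is \emph{completeness}: that every covariant really is a polynomial in these~$26$. One route is to push Gordan's reduction all the way to the termination bound; a cleaner a posteriori verification goes through Hilbert series. The Cayley--Sylvester formula --- equivalently a Molien--Weyl integral over $\SU(2)$ --- gives, for each weight $\det^k\Sym^n$ with $n$ even, the dimension of the corresponding space of covariants, hence the full bigraded Hilbert series of the covariant ring; using the known syzygies among the $26$ proposed generators, one then checks that the subalgebra they generate has the same Hilbert series. Since that subalgebra is contained in the full ring and agrees with it in every bidegree, the two coincide. Rationality is automatic: the transvectant differential operators have integer coefficients, so every covariant built from the universal sextic $f$ by iterated transvection has coefficients in~$\Q$.

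The main obstacle is exactly this completeness-and-counting step. Reductive finiteness and the transvectant formalism are routine; extracting the exact number~$26$ together with the full weight table requires either the long explicit Gordan--Clebsch reduction, carried to its termination bound, or a nontrivial computer-algebra computation of the covariant Hilbert series and of the relations among the generators --- which is why this is a classical theorem of Clebsch rather than an immediate corollary of the general theory.
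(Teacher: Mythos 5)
The paper offers no proof of this statement: it is quoted as a classical result of Clebsch, and your sketch is a faithful outline of exactly the argument behind that citation --- transvectant generation starting from the sextic, Gordan's termination bound, completeness checked against the Cayley--Sylvester/Hilbert-series count, and rationality from the integrality of the transvectant operators. Nothing to correct.
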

\begin{center}
  \begin{tabular}{c|cccccccccccccccc}
    $n\ \backslash\ k$ & -3 & -2 & -1 & 0 & 1 & 2 & 3 & 4 & 5 & 6 & 7 & 8
    & 9 & 10 & 11 & 15 \\\hline
    0 & & & & & &1& &1& &1& & & &1& &1 \\
    2 & & & & & &1& &1& &1&1& &1& &1 \\
    4 & & & &1& &1&1& &1& & 1 \\
    6 & &1& &1&1& &2 \\
    8 & &1&1& &1 \\
    10& & &1 \\
    12&1 \\
  \end{tabular}
\end{center}

We will only manipulate a small number of these generators. Take our scalar
generators of even weight to be the Igusa--Clebsch invariants
$I_2, I_4, I_6, I_{10}$, in Mestre's notation $A',B',C',D'$, and set
\begin{displaymath}
  I_6' \defi (I_2 I_4 - 3 I_6)/2.
\end{displaymath}
Other generators can be computed
following~\cite[§1]{mestreConstructionCourbesGenre1991} (in this reference,
the integers~$m$ and~$n$ on page~315 should be the orders of~$f$ and~$g$, not
their degrees).  Denote the generator of weight~$\det^{15}$ by~$S$, and denote
by $y_1, y_2, y_3$ the generators of weights~$\det^2\Sym^2$, $\det^4\Sym^2$,
and~$\det^6\Sym^2$ respectively. Finally, the generator of
weight~$\det^{-2}\Sym^6$ is the degree 6 polynomial itself. To help the reader
check their computations, we mention that the coefficient of~$a_1^5 a_4^{10}$
in~$S$ is~$2^{-2} 3^{-6} 5^{-10}$.

\subsection{From \texorpdfstring{$q$}{q}-expansions to covariants}
\label{subsec:identification}

We now explain how to compute the polynomial covariant associated with a Siegel
modular form of known $q$-expansion. The works of Igusa already provide the answer
in the scalar-valued case.

\begin{thm}
  \label{thm:scalar-identification}
  We have
  \begin{displaymath}
    \begin{aligned}
      4\, \Cov(\psi_4) &= I_4, \qquad\qquad\ \;
      4\, \Cov(\psi_6) = I_6', \\
      -2^{12}\, \Cov(\chi_{10}) &= I_{10},\qquad\quad
      2^{15} \,\Cov(\chi_{12}) = I_2 I_{10}, \\
      2^{32} 3^{-9} 5^{-10} \Cov(\chi_{35}) &= I_{10}^2 S.
    \end{aligned}
  \end{displaymath}
\end{thm}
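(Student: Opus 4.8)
The strategy is to match the two sides of each identity by comparing $q$-expansions, using the fact that both Siegel modular forms and polynomial covariants are rigid enough that a finite amount of data pins them down. More precisely, by \Cref{prop:siegel-cov-weight} the covariant $\Cov(f)$ is the unique fractional covariant whose associated Siegel modular function is $f$; hence two modular forms of the same weight are equal precisely when their covariants are equal, and I only need to verify each displayed equation at the level of $q$-expansions. The key computational tool is the standard curve $\Crv(\tau)$ of \Cref{prop:curve-siegel-mf}, whose coefficients $a_i(\tau)$ I can compute as $q$-expansions to any desired precision; then for any polynomial covariant $C$ the composition $\tau\mapsto C\bigl(\Crv(\tau)\bigr)$ is a $q$-expansion that I evaluate by plugging in the $a_i(\tau)$, and by \Cref{thm:siegel-structure} (and its vector-valued analogue) a $q$-expansion known to sufficient precision determines the modular form, hence the covariant, uniquely.

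The first concrete step is to produce the $q$-expansion of the standard curve $\Crv(\tau)$, i.e.\ of the seven functions $a_0(\tau),\dots,a_6(\tau)$. One way is to invert the relations between Siegel modular forms and Igusa--Clebsch invariants: the $a_i$ are determined by the requirement that the Igusa--Clebsch invariants $I_2,I_4,I_6,I_{10}$ of the sextic $\sum a_i x^i$ match the values of the corresponding Siegel modular forms (via \Cref{thm:siegel-structure}), together with the normalization fixing the differential forms $\omega(\Crv(\tau))=\omega(\tau)$ encoded in \Cref{cor:hyperell-rep}. Equivalently, one uses the known expression of $\Crv(\tau)$ in terms of theta constants. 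Either way one obtains $a_i(\tau)$ as explicit elements of $\C(q_2)[[q_1,q_3]]$. Once these are in hand, the right-hand sides are computed by substituting into the classical formulas for $I_2,I_4,I_6,I_{10}$ and $R$ (as recalled from Mestre's article in §\ref{subsec:cov}), while the left-hand sides are the $q$-expansions of $\psi_4,\psi_6,\chi_{10},\chi_{12},\chi_{35}$ from \Cref{thm:siegel-structure}; one then checks term-by-term agreement up to a precision guaranteed to be sufficient for the relevant weight. For $\chi_{35}$, which is a cusp form, one uses the second half of \Cref{thm:siegel-cov} to know in advance that $\Cov(\chi_{35}/\chi_{10})$ — equivalently the quotient appearing as $I_{10}^2 R / I_{10} = I_{10}R$ after clearing — is again polynomial, which is what makes the identity $2^{37}3^{-9}5^{-10}\Cov(\chi_{35}) = I_{10}^2 R$ well-posed.

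The main obstacle is bookkeeping rather than conceptual: getting the $q$-expansion of $\Crv(\tau)$ correct, and in particular nailing down the precision needed for each identity so that the $q$-expansion comparison is a genuine proof and not merely strong numerical evidence. Since covariants of weight $\det^k\Sym^n$ form a finite-dimensional space (indeed a single generator in each of the relevant weight slots by \Cref{thm:cov-structure}), the precision bound is effective: it suffices to separate the finitely many monomials of bounded degree in the coefficients $a_i$ that can contribute, and the exact power-of-$q$ truncation can be read off from the weight. A secondary nuisance is the proliferation of normalization constants — the powers of $2$, $3$, $5$ — which are fixed once and for all by the chosen normalizations of $\chi_{10},\chi_{12},\chi_{35}$ (stated after \Cref{thm:siegel-structure}) and of the Igusa--Clebsch covariants (Mestre's $A',B',C',D'$, with the calibration $a_1^5 a_4^{10}$-coefficient of $R$ given at the end of §\ref{subsec:cov}); I would track these carefully by matching the leading nonzero Fourier coefficient in each identity before checking the rest. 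Finally, the scalar-valued case treated here is essentially due to Igusa, so for the classical identities $4\,\Cov(\psi_4)=I_4$, $4\,\Cov(\psi_6)=I_6'$, $2^{12}\,\Cov(\chi_{10})=I_{10}$ one may simply cite Igusa's work and only verify that the present normalizations are consistent with his; the identities for $\chi_{12}$ and $\chi_{35}$ then follow from the structure theorem \Cref{thm:siegel-structure} together with a short $q$-expansion check to fix the constant.
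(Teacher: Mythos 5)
Your proposal contains a circularity in its primary route. You propose to compute the $q$-expansions of the coefficients $a_i(\tau)$ of the standard curve $\Crv(\tau)$ by ``inverting the relations'': requiring that the Igusa--Clebsch invariants of $\sum a_i x^i$ match the values of the corresponding Siegel modular forms. But the statement that the Igusa--Clebsch invariants of $\Crv(\tau)$ equal (specific constants times) $\psi_4(\tau),\psi_6(\tau),\chi_{10}(\tau),\dots$ \emph{is} the theorem being proved; you cannot use it to produce the $a_i(\tau)$ and then ``verify'' it by substitution. Moreover, even granting the relations up to unknown constants, the invariants only determine the sextic up to the $\GL_2(\C)$-action, and the whole content of the normalization $\omega(\Crv(\tau))=\omega(\tau)$ is to kill that ambiguity; rescaling the equation by $\det^{-2}\Sym^6(\lambda\,\mathrm{id})$ multiplies each $I_k$ by $\lambda^k$ without changing the curve, which is exactly the one-parameter family of candidate identities that a weight/degree argument cannot distinguish. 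Your fallback --- ``equivalently, one uses the known expression of $\Crv(\tau)$ in terms of theta constants'' --- is not an equivalent convenience but the crux: an independent, analytically normalized expression for the curve attached to $(A(\tau),\omega(\tau))$ is precisely what is needed to fix the constant, and this is supplied by Thomae's formul\ae.

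The paper's proof is organized exactly around this point: it cites Igusa (p.\,848 of his 1962 paper) for the existence of a single $\lambda\in\C^\times$ such that each displayed relation holds up to the factor $\lambda^k$, $k\in\{4,6,10,12,35\}$ (after translating Igusa's covariant $E$ into $R$), and then invokes Thomae's formul\ae\ to conclude $\lambda=1$. Note also that the paper deliberately proves this theorem \emph{before} \Cref{prop:f86}: the $q$-expansion of $\Crv(\tau)$ via $f_{8,6}/\chi_{10}$ is itself normalized using \Cref{thm:scalar-identification}, so it is not available as an input here. If you restructure your argument so that the theta-constant route is the main line --- i.e., obtain the $q$-expansions of the $a_i(\tau)$ from Thomae, then compare leading Fourier coefficients --- you recover a correct proof that is essentially the paper's, just with the Igusa ``up to $\lambda^k$'' step replaced by a direct (and more laborious) $q$-expansion comparison with a Sturm-type precision bound.
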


\begin{proof}
  By~\cite[p.\,848]{igusaModularFormsProjective1967}, there exists a constant
  $\lambda\in\C^\times$ such that these relations hold up to a
  factor~$\lambda^k$, for $k\in\{4,6,10,12,35\}$ respectively. (Note that
  Igusa's covariant~$E$ is $2^5 3^9 5^{10} S$.) To determine~$\lambda$, we
  apply Thomae's formula~\cite[Thm.~IIIa.8.1]{mumfordTataLecturesTheta1984} on
  the genus~$2$ curve\footnote{The even more obvious choice
    $y^2 = \prod_{j=1}^6 (x-j)$ has a vanishing~$S$.}
  \begin{displaymath}
    \Crv_E: y^2 = E(x) =  x\prod_{j=2}^6 (x-j)
  \end{displaymath}
  whose Weierstrass points are ordered in the obvious way.
  Let~$\tau\in \Half_2$ be a period matrix of~$\Jac(\Crv_E)$, choose an
  isomorphism~$\eta:\Jac(\Crv_E)\to A(\tau)$, and let~$\sigma$ be the matrix
  of~$\eta^*$ in the bases~$\omega(\tau)$
  and~$\omega$. By~\cite[Thm.~IIIa.8.1]{mumfordTataLecturesTheta1984}, up to a
  common factor~$\mu\in \C^\times$ with~$\mu^2 = \det(\sigma)$, the ten even
  theta constants at~$\tau$ are
  \begin{displaymath}
    2\sqrt[4]{30},\ 3\sqrt{2},\ 2\sqrt[4]{18},\ 2\sqrt[4]{15},\ 2\sqrt{3},\
    \sqrt[4]{60},\ \sqrt[4]{180},\ 2\sqrt[4]{6} \text{ (twice)},\ \sqrt[4]{12}.
  \end{displaymath}
  (The correct roots of unity can be computed by noticing that these values are
  positive real numbers \cite[pp.\,216--217]{thomaeBeitragZurEstimmung1870},
  or by analytic computations as in \cref{rem:vector-identification-numcheck}
  below.)  Using the formulas from \cite[§7.1]{strengComputingIgusaClass2014}
  and~\cite[p.\,493]{bolzaDarstellungRationalenGanzen1887}, the values of the
  modular forms~$\psi_4,\ldots,\chi_{35}$ at~$\tau$ are
  \begin{align*}
    \psi_4(\tau) &= 345168\det(\sigma)^4, \qquad\qquad\quad
                   \psi_6(\tau) = 78382080\det(\sigma)^6,\\
    \chi_{10}(\tau) &= -128595600\det(\sigma)^{10},
                      \qquad \chi_{12}(\tau) = 129720811500 \det(\sigma)^{12},\\
    \chi_{35}(\tau) &=  57046688433310783937336006400000\det(\sigma)^{35}.
  \end{align*}
  On the other hand, using the formulas
  in~\cite{mestreConstructionCourbesGenre1991}, we obtain
  \begin{align*}
    I_4(E) &= 1380672, \qquad\qquad\qquad I_6'(E) = 313528320,\\
    I_{10}(E) &= 526727577600, \qquad\ I_2I_{10}(E) = 4250691551232000,\\
    I_{10}^2S(E) &= 3983354751469532799105506450866176/3125.
  \end{align*}
  Thus~$\lambda^4 = \lambda^6 = \lambda^{10} = \lambda^{12} = \lambda^{35} =
  1$, hence~$\lambda= 1$.
\end{proof}

Therefore, the Igusa invariants satisfy, in accordance
with~\cite[§2.1]{strengComputingIgusaClass2014}:
\begin{displaymath}
  \Cov(j_1) = \dfrac{I_4 I_6'}{I_{10}},\quad \Cov(j_2) = \dfrac{I_2I_4^2}{I_{10}},\quad
  \Cov(j_3) = \dfrac{I_4^5}{I_{10}^2}.
\end{displaymath}
In order to obtain similar formulas for vector-valued modular forms, we first
compute the $q$-expansion of the standard curve $\Crv(\tau)$ from
\Cref{def:std-curve}.

\begin{prop}
  \label{prop:f86}
  The following equality of Siegel modular functions holds:
  \begin{displaymath}
    \Crv(\tau) = \frac{\chi_{6,8}(\tau)}{\chi_{10}(\tau)}.
  \end{displaymath}
\end{prop}

\begin{proof}
  The modular form~$\chi_{6,8}$ introduced in §\ref{subsec:siegel-g2} is a cusp
  form. By \Cref{thm:cov-mf}, $\Cov(\chi_{6,8}/\chi_{10})$ is a covariant of
  weight~$\det^{-2}\Sym^6$, and this space of covariants is $1$-dimensional by
  \Cref{thm:cov-structure}. Therefore, the claimed equality holds up to a
  certain factor~$\lambda\in\C^\times$. This yields $q$-expansions for the
  coefficients~$a_i(\tau)$ of~$\Crv(\tau)$ up to a factor~$\lambda$. Then,
  \cref{thm:scalar-identification} implies that
  $\lambda^4=\lambda^6=\lambda^{35}=1$, hence~$\lambda=1$.
\end{proof}

\Cref{prop:f86} improves slightly on
\cite[§6]{cleryCovariantsBinarySextics2017} (which follows the same proof
strategy) in that we determine the correct scalar factor.

Given a Siegel modular form~$f$ of weight~$\rho$ whose $q$-expansion
can be computed, the following algorithm now recovers the expression
of~$\Cov(f)$ as a polynomial.

\begin{algo}~
  \label{alg:qexp-to-cov}
  \begin{enumerate}
  \item Compute a generating family for the vector space of
    polynomial covariants of weight~$\rho$ using
    \Cref{thm:cov-structure}, and extract a basis~$\mathcal{B}$ using the
      embedding into~$\C[a_0,\dots,a_6]$.
  \item \label{step:qexp} Choose a precision~$\prc$ and compute the
    $q$-expansion of~$f$ modulo $(q_1^\prc, q_3^\prc)$.
  \item For every $B\in \mathcal{B}$, compute the $q$-expansion of the Siegel
    modular function $\tau\mapsto B\bigl(\Crv(\tau)\bigr)$ modulo
    $(q_1^\prc, q_3^\prc)$ using \Cref{prop:f86}.
  \item Solve a linear system to write~$\Cov(f)$ as a linear combination of the
    elements of~$\mathcal{B}$; if the matrix does not have full rank, go back
    to step~\ref*{step:qexp} with a larger~$\prc$.
  \end{enumerate}
\end{algo}

We now apply \Cref{alg:qexp-to-cov} to the derivatives of the Igusa invariants,
denoted by~$Dj_k$ for $1\leq k\leq 3$ following the notation
of~§\ref{subsec:siegel}.

\begin{thm}
  \label{thm:vector-identification}
  We have
  \begin{align*}
    \Cov(Dj_1)
    &=
    \dfrac{1}{8I_{10}}\bigl(153\, I_2^2 I_4 y_1 -
      540\, I_2 I_6 y_1 + 540\, I_4^2 y_1 +
      93150\, I_2 I_4 y_2\\
    &\qquad\qquad - 243000\, I_6 y_2 + 10935000\, I_4 y_3
    \bigr), \\
    \Cov(Dj_2)
    &=
      \dfrac{1}{I_{10}}\bigl( 90\, I_2^2 I_4 y_1 + 900\, I_2^2 y_1 +
      40500\, I_2 I_4 y_2
      \bigr), \\
    \Cov(Dj_3)
    &=
      \dfrac{1}{I_{10}^2}\bigl( 225\, I_2 I_4^4 y_1 + 101250\, I_4^4
      y_2 \bigr).
  \end{align*}
\end{thm}

\begin{proof}
  Let $1\leq k\leq 3$. The function~$\chi_{10}^2 j_k$ has no poles
  on~$\Atwo(\C)$, so
    $f_k \defi \chi_{10}^3\, Dj_k$
  is a Siegel modular form. Its $q$-expansion can be computed from
  the $q$-expansion of~$j_k$ by formal differentiation. Since
  \begin{displaymath}
     \frac{1}{2\pi i}\dfrac{\partial}{\partial \tau_l} = q_l \frac{\partial}{\partial q_l}
  \end{displaymath}
  for $1\leq l\leq 3$, we check that~$f_k$ is a cusp form. By
  \Cref{thm:cov-mf}, $\Cov(f_k/\chi_{10})$ is a polynomial covariant of
  weight~$\det^{20}\Sym^2$, and by \Cref{thm:cov-structure}, a basis of this
  space of covariants is given by covariants of the form~$I y$ where
  $y\in\{y_1,y_2,y_3\}$ and~$I$ is a scalar-valued covariant of the appropriate
  even weight. \Cref{alg:qexp-to-cov} succeeds with $\prc=3$; the computations
  were done using Pari/GP~\cite{theparigroupPariGPVersion2019}.
\end{proof}

\begin{rem}
  \label{rem:vector-identification-numcheck}
  \Cref{thm:scalar-identification,thm:vector-identification} can be checked
  numerically. Computing big period matrices of genus~$2$ curves (see for
  instance~\cite{molinComputingPeriodMatrices2019}) provides pairs
  $\bigl(\tau,\, \Crv(\tau)\bigr)$ with $\tau\in\Half_2$. We can evaluate the Igusa
  invariants at a given $\tau$ to high precision using their expression in
  terms of theta
  functions~\cite{labrandeComputingThetaFunctions2016}. Therefore, we can also
  evaluate their derivatives numerically with high precision and compute the
  associated covariant using floating-point linear algebra. We used the
  libraries \texttt{hcperiods}~\cite{molinHcperiodsPeriodMatrices2018} and
  \texttt{cmh}~\cite{engeCMHComputationGenus2014} for these computations.
\end{rem}

Using \Cref{thm:vector-identification} and linear algebra, one can obtain
similar formulas for the derivatives of other invariants such as the
invariants~$h_k$ defined in \cref{rem:invariants_bielliptic}.

\subsection{Deformation matrix and action on tangent spaces}
\label{subsec:norm-matrix}

Let $E$ and~$F$ be genus~$2$ curve equations over~$\C$, let $A$ and~$A'$ be
the Jacobians of~$\Crv_E$ and~$\Crv_{F}$, and let $\isog\from A\to A'$ be an
$\ell$-isogeny. Taking the dual bases of~$\omega_E$ and~$\omega_{F}$ defines
bases of the tangent spaces~$T_0(A)$ and~$T_0(A')$. If the pair~$(A,A')$ is
sufficiently generic, then there exists only one~$\ell$-isogeny $\isog:A\to A'$
up to sign, and we show how to compute, up to sign, the matrix of the tangent map
$d\isog\from T_0(A)\to T_0(A')$ in the above bases from the data
of the curve equations and modular equations of level~$\ell$. First, we
introduce the following matrix notations.

\begin{defn}
  \label{def:djdtau}
  For $\tau\in\Half_2$, we define
  \begin{displaymath}
    \djdtau(\tau) \defi \left(\dfrac{1}{2\pi i}\dfrac{\partial
        j_k}{\partial\tau_l}(\tau)\right)_{1\leq k, l\leq 3}
    \cdot \left(
      \begin{matrix}
        2&0&0\\0&1&0\\0&0&2
      \end{matrix} \right).
  \end{displaymath}
  In other words, if we set 
  \begin{displaymath}
    v_1 = \mat{2}{0}{0}{0},\quad v_2 = \mat{0}{1}{1}{0},\quad
    v_3 = \mat{0}{0}{0}{2},
  \end{displaymath}
  then for each $1\leq l\leq 3$, the $l$-th column of~$\djdtau(\tau)$ contains
  (up to dividing by~$2\pi i$) the derivatives of the Igusa invariants at~$\tau$ in
  the direction~$v_l$.
\end{defn}

\hspace{-2pt}The next two lemmas summarize the properties of the matrix-valued function~$DJ$.

\begin{lem}
  \label{lem:sym2-DJ}
  Let~$\tau\in \Half_2$ be a point where the Igusa invariants are defined, and
  let~$r\in \GL_2(\C)$. Then the columns of~$DJ(\tau)\Sym^2(r)$ contain the
  derivatives of the three Igusa invariants at~$\tau$ in the
  directions~$r v_l r^t$ for~$1\leq l\leq 3$, divided by~$2\pi i$.
\end{lem}

\begin{proof}
  This relation comes from the fact that the representation of~$\GL_2(\C)$ on
  the space of symmetric $2\times2$ matrices for which~$r$ acts
  by~$v\mapsto r v r^t$ is isomorphic to~$\Sym^2$. Here we check it by a direct
  calculation.  Write $r = \tmat{a}{b}{c}{d}$. We have
  \begin{align*}
    r v_1 r^t &= a^2v_1 + 2acv_2 + c^2v_3,\\
    r v_2 r^t &= ab v_1 + (ad+bc)v_2 + cd v_3,\\
    r v_3 r^t &= b^2 v_1 + 2bd v_2 + d^2 v_3.
  \end{align*}
  This matches the entries of the matrix~$\Sym^2(r)$ defined
  in~§\ref{subsec:siegel-g2}.
\end{proof}

\begin{lem}
  \label{lem:weight-DJ}
  Let~$\rho$ be the representation of~$\GL_2(\C)$ on~$V = \Mat_{3\times 3}(\C)$
  given by
  \begin{displaymath}
    \rho(r): M\mapsto M \Sym^2(r^t), \quad\text{for all } r\in \GL_2(\C).
  \end{displaymath}
  Then~$DJ$ is a vector-valued Siegel modular function on~$V$ of weight~$\rho$.
\end{lem}

\begin{proof}
  We know that for each~$1\leq k\leq 3$, the function~$Dj_k$ is a vector-valued
  modular function of weight~$\Sym^2$ as defined
  in~§\ref{subsec:siegel-g2}. Hence each column of the matrix $DJ(\tau)^t$ is a
  vector-valued modular form for the representation
  \begin{displaymath}
    \rho: r\mapsto \Diag(2,1,2) \Sym^2(r) \Diag(2,1,2)^{-1} = \Sym^2(r^t)^t,
  \end{displaymath}
  and the conclusion follows by transposing.
\end{proof}

We also denote by $\Cov(\djdtau)$ the associated ``matrix-valued'' fractional
covariant. For a given curve equation~$E$, \Cref{thm:vector-identification}
expresses the entries of the $3\times 3$ matrix $\Cov(\djdtau)(E)$ in terms of
the coefficients of~$E$.

\begin{defn}
  \label{defn:modeq-matrix}
  Consider the Siegel modular
  equations~$\Psi_{\ell,1}, \Psi_{\ell,2}, \Psi_{\ell,3}$ of
  level~$\ell$ as elements of the ring
  $\Q[J_1,J_2,J_3,J_1',J_2',J_3']$. We define
  \begin{displaymath}
    D\Psi_{\ell,L} \defi
    \left(\dfrac{\partial\Psi_{\ell,n}}{\partial J_k}\right)_{1\leq n,
      k\leq 3}
    \quad\text{and} \quad
    D\Psi_{\ell,R} \defi \left(\dfrac{\partial\Psi_{\ell,n}}{\partial J_k'}\right)_{1\leq n,
      k\leq 3}.
  \end{displaymath}
  They are $3\times 3$ matrices with coefficients in $\Q[J_1,J_2,J_3,J_1',J_2',J_3']$.
\end{defn}

With these notations in place, we can define what a generic isogeny is in the
context of \Cref{thm:main}, and define its attached deformation
matrix~$\defo(\isog)$.

\begin{defn}
  \label{def:generic}
  Let~$\isog:A\to A'$ be an $\ell$-isogeny as above. Write~$j$ as a shorthand
  for the Igusa invariants $(j_1,j_2,j_3)$ of~$A$, and~$j'$ for the Igusa invariants
  $(j_1',j_2',j_3')$ of~$A'$. We say that~$(A,A')$ is \emph{generic}, or
  that~$\isog$ is generic, when the complex $3\times 3$ matrices
  $D\Psi_{\ell,L}(j,j')$, $D\Psi_{\ell,R}(j,j')$, $\Cov(\djdtau)(E)$ and
  $\Cov(\djdtau)(F)$ are invertible. In this case, we define the
  \emph{deformation matrix}~$\defo(\isog)$ of~$\isog$ as
  \begin{displaymath}
    \defo(\isog) \defi - \Cov(\djdtau)(F)^{-1}\cdot D\Psi_{\ell,R}(j, j')^{-1}
    \cdot  D\Psi_{\ell,L}(j, j') \cdot \Cov(\djdtau)(E).
  \end{displaymath}
\end{defn}

The deformation matrix~$\defo(\isog)$ has a geometric interpretation that we
detail in~\cref{sec:moduli}: if~$x,x'$ are the points of~$\AAtwo$ corresponding
to~$A,A'$, then~$\defo(\isog)$ is the matrix of the deformation map of~$\isog$
in the bases of~$\tangent{x}{\AAtwo}$ and~$\tangent{x'}{\AAtwo}$ associated
with~$\omega_E$ and~$\omega_{F}$ via the Kodaira--Spencer isomorphism.

Now we can relate the deformation matrix~$\defo(\isog)$ to the tangent
map~$d\isog$, also identified with its matrix in the specified bases
of~$T_0(A)$ and~$T_0(A')$.

\begin{prop} 
  \label{prop:norm-matrix}
  With the above notation, assume that~$(A,A')$ is generic. Then there exists
  only one $\ell$-isogeny $\isog: A\to A'$ up to sign, and we have
  \begin{displaymath}
    \Sym^2(d\isog) = \ell\, \defo(\isog).
  \end{displaymath}
\end{prop}

\begin{proof} Choose $\tau\in\Half_2$ and isomorphisms $\eta, \eta'$ giving a
  commutative diagram
  \begin{displaymath}
    \begin{tikzcd}
      A \ar[r,"\isog"] \ar[d, "\eta"]& A'
      \ar[d, "\eta'"]
      \\ A(\tau) \ar[r, "z\,\mapsto z"] & A(\tau/\ell).
    \end{tikzcd}
  \end{displaymath}
  Let~$r$ be the matrix of~$\eta^*$ in the bases $\omega(\tau)$
  and~$\omega_E$, and define~$r'$ similarly.  Then we have
  $d\isog = r'^\tp r^{-\tp}$. By the definition of modular equations, we have
  \begin{displaymath}
    \Psi_{\ell,k}\bigl(j_1(\tau), j_2(\tau), j_3(\tau), j_1(\tau/\ell),
    j_2(\tau/\ell), j_3(\tau/\ell)\bigr) = 0\quad \text{for } 1\leq k\leq 3.
  \end{displaymath}
  We differentiate these equalities with respect to $\tau_i$ for
  $1\leq i\leq 3$. This yields
  \begin{displaymath}
    \sum_{n=1}^3 \frac{\partial\Psi_{\ell,k}}{\partial J_n}(j,j') \frac{\partial j_n}{\partial \tau_i}(\tau) + \frac{1}{\ell} \sum_{n=1}^3 \frac{\partial \Psi_{\ell,k}}{\partial J_n'}(j,j') \frac{\partial j_n}{\partial\tau_i}(\tau/\ell) = 0
  \end{displaymath}
  for all $1\leq i,k\leq 3$, which corresponds to the coefficient $(k,i)$ of
  the matrix relation
  \begin{displaymath}
    D\Psi_{\ell,L}(j, j') \cdot \djdtau(\tau) + \dfrac{1}{\ell} D\Psi_{\ell,R}(j, j')
    \cdot \djdtau(\tau/\ell) = 0.
  \end{displaymath}
  We rewrite this last relation as
  \begin{displaymath}
    -\ell \, D\Psi_{\ell,L}(j, j') \cdot \Cov(\djdtau)(E) \cdot \Sym^2(r^\tp)
    =  D\Psi_{\ell,R}(j, j')
    \cdot \Cov(\djdtau)(F) \cdot \Sym^2(r'^\tp),
  \end{displaymath}
  and the expression of~$\Sym^2(d\isog)$ follows.

  This determines~$d\isog$ up to sign, so~$\pm \isog$ are the only
  $\ell$-isogenies from~$A$ to~$A'$, as all isogenies in characteristic zero
  are separable.
\end{proof}

\subsection{The Hilbert case}
\label{subsec:explicit-hilbert}

We now adapt our methods to recover the tangent matrix of a generic isogeny in
the Hilbert case, for any real multiplication field~$K$. If the attached ring
of Hilbert modular forms is known, several improvements to this general
strategy can be made: see \cref{sec:Qr5} for the case~$K = \Q(\sqrt{5})$.

A crucial difference with the Siegel case is that we cannot directly compute
the tangent matrix of a $\beta$-isogeny, where~$\beta\in \Z_K$ is a totally
positive prime, from an arbitrary choice of curve equations attached to~$A$ and~$A'$:
the real multiplication embedding has to play a role. The convenient notion for
us will be the following.

\begin{defn}
  \label{def:hilbert-normalized}
  Let~$(A,\iota)$ be a p.p.~abelian surface with real multiplication
  by~$\Z_K$. We say that a basis~$\omega$ of~$\Omega^1(A)$ is
  \emph{Hilbert-normalized} if for every~$\alpha\in \Z_K$, the matrix of
  $\iota(\alpha)^*: \Omega^1(A)\to\Omega^1(A)$ in the basis~$\omega$
  is~$\Diag(\alpha,\conj{\alpha})$. We say that a genus~$2$ curve equation~$E$
  such that $A = \Jac(\Crv_E)$ is \emph{Hilbert-normalized} if~$\omega_E$ is.
\end{defn}

In other words, a basis~$\omega$ of~$\Omega^1(A)$ is Hilbert-normalized if and
only if its dual basis consists of eigenvectors for the action of~$\Z_K$
on~$\tangent{0}{A}$. Hilbert-normalized bases are the right notion to consider
in the context of evaluating a Hilbert modular form on a pair~$(A,\omega)$, in
analogy with covariants in the Siegel case: we refer to \cref{sec:Qr5} for a
detailed discussion.

For the moment, assume that we have a $\beta$-isogeny
$\isog\from (A,\iota)\to (A',\iota')$ between abelian surfaces with real
multiplication by~$\Z_K$, and that we are given Hilbert-normalized curve
equations~$E$ and~$F$. We use the notation $D\Psi_{\beta,L}$
and~$D\Psi_{\beta,R}$ in the Hilbert case in
analogy with \cref{defn:modeq-matrix}. We also write
\begin{displaymath}
  T \defi \left(
    \begin{matrix}
      1&0 \\ 0&0 \\ 0&1
    \end{matrix}
  \right).
\end{displaymath}

\begin{lem}
  \label{lem:dt-dtau}
  Let~$E$ be a genus~$2$ curve equation such that~$\Jac(\Crv_E)$ has real
  multiplication by~$\Z_K$. Choose an isomorphism
  $\eta:\Jac(\Crv_E) \to A_K(t)$ for some $t\in \Half_1^2$, and let
  $r\in \GL_2(\C)$ be the matrix
  of~$\eta^*:\Omega^1(A_K(t))\to \Omega^1(\Jac(\Crv_E))$ in the bases
  $\omega_K(t)$ and $\omega_E$. Finally, let~$\tau = H(t)$. Then we have
  \begin{displaymath}
    \Cov(DJ)(E) = DJ(\tau) \Sym^2(R^t r^t).
  \end{displaymath}
  In other words, by \cref{lem:sym2-DJ}, the columns of~$\Cov(DJ)(E)$ contain
  the derivatives of the Igusa invariants at~$\tau$ in the directions
  \begin{displaymath}
    \frac{1}{\pi i} R^t r^t \mat{1}{0}{0}{0} r R,
    \quad \frac{1}{2\pi i} R^t r^t \mat{0}{1}{1}{0} r R \quad\text{and}\quad
    \frac{1}{\pi i} R^t r^t \mat{0}{0}{0}{1} r R.    
  \end{displaymath}
\end{lem}

\begin{proof}
  Let~$\zeta: A_K(t)\to A(\tau)$ be the isomorphism induced by left
  multiplication by~$R^t$ on~$\C^2$. The matrix of~$\zeta^*$ in the bases
  $\omega(\tau)$ and~$\omega_K(t)$ is~$R$, so the action
  of~$(\zeta \circ \eta)^*$ on differential forms is given by the matrix~$r
  R$. The conclusion follows from the definition of covariants and
  \cref{lem:weight-DJ}.
\end{proof}

\begin{prop} 
  \label{prop:norm-matrix-hilbert}
  Let $\isog\from A\to A'$ be a $\beta$-isogeny and $E, F$ be
  Hilbert-normalized curve equations as above. Then the tangent matrix~$d\isog$
  is diagonal, and we have
  \begin{align*}
    D\Psi_{\beta,L}(j, j') \cdot \Cov(\djdtau)(E) \cdot T \Diag(\beta, \conj{\beta})
    = - D\Psi_{\beta,R}(j,j')\cdot \Cov(\djdtau)(F) \cdot T\, (d\isog)^{2}.
  \end{align*}
\end{prop}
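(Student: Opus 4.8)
The plan is to mirror the proof of \Cref{prop:norm-matrix}, with \Cref{prop:djdt-transformation} replacing the weight-$\Sym^2$ transformation law and part~(2) of \Cref{prop:complex-isog} replacing part~(1).

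First, $d\isog$ is diagonal. By \Cref{def:beta-isog}, a $\beta$-isogeny satisfies $\isog\circ\iota(\alpha)=\iota'(\alpha)\circ\isog$ for all $\alpha\in\Z_K$; differentiating at the neutral point gives $d\isog\circ d\iota(\alpha)=d\iota'(\alpha)\circ d\isog$. Since $\Crv,\Crv'$ are Hilbert-normalized, the matrices of $\iota(\alpha)^*$ and $\iota'(\alpha)^*$ in $\omega(\Crv),\omega(\Crv')$ are both $\Diag(\alpha,\conj{\alpha})$, hence so are the matrices of $d\iota(\alpha),d\iota'(\alpha)$ in the dual bases. Thus $d\isog$ commutes with $\Diag(\alpha,\conj{\alpha})$, and picking $\alpha$ with $\alpha\neq\conj{\alpha}$ forces $d\isog$ to be diagonal.

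Next I apply part~(2) of \Cref{prop:complex-isog}: there are $t\in\Half_1^2$ and isomorphisms $\eta\from(A,\iota)\to(A_K(t),\iota_K(t))$ and $\eta'\from(A',\iota')\to(A_K(t/\beta),\iota_K(t/\beta))$ fitting into a commutative square whose lower arrow is $z\mapsto z$. Let $r$ be the matrix of $\eta^*$ in $\omega_K(t),\omega(\Crv)$ and $r'$ that of $\eta'^*$ in $\omega_K(t/\beta),\omega(\Crv')$; since all the triples involved are Hilbert-normalized and $\eta,\eta'$ respect the real multiplication, $r$ and $r'$ are diagonal. The arrow $z\mapsto z$ pulls $\omega_K(t/\beta)$ back to $\omega_K(t)$, so it has matrix the identity, and chasing the square gives $\isog^*=\eta^*\circ(z\mapsto z)^*\circ(\eta'^*)^{-1}$ with matrix $r\,r'^{-1}$ in $\omega(\Crv'),\omega(\Crv)$; dualizing, and using that $r,r'$ are diagonal, yields $d\isog=r\,r'^{-1}$.

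Finally, by \Cref{def:modeq} the identities $\Psi_{\beta,n}\bigl(j(t),j(t/\beta)\bigr)=0$ hold for $n=1,2,3$ as meromorphic functions of $t$. Differentiating with respect to $t_1,t_2$, and using $t/\beta=(t_1/\beta,\ t_2/\conj{\beta})$, the chain rule gives
\[
  D\Psi_{\beta,L}(j,j')\cdot\djdt(t)\;+\;D\Psi_{\beta,R}(j,j')\cdot\djdt(t/\beta)\cdot\Diag(1/\beta,1/\conj{\beta})\;=\;0.
\]
Substitute $\djdt(t)=\djdt(\Crv)\,r^{-2}$ and $\djdt(t/\beta)=\djdt(\Crv')\,r'^{-2}$ from \Cref{prop:djdt-transformation}, multiply on the right by $r^{2}$, and use that the diagonal $2\times2$ matrices $r^{2},r'^{-2},\Diag(1/\beta,1/\conj{\beta})$ pairwise commute, so that $r'^{-2}\,\Diag(1/\beta,1/\conj{\beta})\,r^{2}=\Diag(1/\beta,1/\conj{\beta})\,(r\,r'^{-1})^{2}=\Diag(1/\beta,1/\conj{\beta})\,(d\isog)^{2}$. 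This gives exactly the asserted identity.

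I expect the main obstacle to be the bookkeeping around $d\isog=r\,r'^{-1}$: tracking transposes and inverses through $\isog^*=\eta^*\circ(z\mapsto z)^*\circ(\eta'^*)^{-1}$ so as to get the orientation right, and correctly extracting the factor $\Diag(1/\beta,1/\conj{\beta})$ from the chain rule for $t\mapsto t/\beta$. Once $r,r'$ are recognized as diagonal — which is precisely where Hilbert-normalization is used — the final rearrangement is formal, the squaring $(d\isog)^{2}$ playing the role that $\Sym^2(d\isog)$ plays in \Cref{prop:norm-matrix}.
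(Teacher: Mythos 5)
Your proof is correct and follows the paper's argument essentially verbatim: uniformize via \Cref{prop:complex-isog}, differentiate the modular equations, and substitute using \Cref{prop:djdt-transformation}. The only differences are cosmetic: you supply an explicit RM-equivariance argument for the diagonality of $d\isog$ where the paper simply reads it off from the diagonal matrices $r,r'$, and your orientation conventions ($d\isog = r\,r'^{-1}$ and $\djdt(t)=\djdt(\Crv)\cdot r^{-2}$) are the inverses of those appearing in the paper's proof — but these two inversions cancel in the final rearrangement, and yours are in fact the ones literally consistent with \Cref{prop:djdt-transformation} as stated.
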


\begin{proof}
  Choose $t\in\Half_1^2$ and isomorphisms $\eta, \eta'$ giving a commutative
  diagram
  \begin{displaymath}
    \begin{tikzcd}
      \bigl(A,\iota\bigr) \ar[r,"\isog"] \ar[d, "\eta"] &
      \bigl(A',\iota'\bigr) \ar[d, "\eta'"] \\
      \bigl(A_K(t),\iota_K(t)\bigr) \ar[r, "z\mapsto z"] &
      \bigr(A_K(t/\beta),\iota_K(t/\beta)\bigr).
    \end{tikzcd}
  \end{displaymath}
  Let~$r$ be the matrix of~$\eta^*$ in the bases $\omega_K(t),\omega$, and
  define~$r'$ similarly; they are diagonal.  We have
  $d\isog = r'^\tp r^{-\tp} = r'r^{-1}$. We differentiate the modular equations
  \begin{displaymath}
    \Psi_{\beta,k}\bigl(j_1(H(t)),j_2(H(t)),j_3(H(t)),
    j_1(H(t/\beta)), j_2(H(t/\beta)), j_3(H(t/\beta))\bigr) = 0
  \end{displaymath}
  with respect to $t\in \Half_1^2$. Using \Cref{lem:dt-dtau}, the resulting
  equality can be written as
  \begin{align*}
    &D\Psi_{\beta,L}(j, j') \cdot \Cov(\djdtau)(E) \cdot \Sym^2(r^t) \cdot T\\
    &\quad + D\Psi_{\beta,R}(j,j') \cdot \Cov(\djdtau)(F)
      \cdot \Sym^2(r'^t) \cdot T\cdot  \Diag(1/\beta,1/\conj{\beta}) = 0.    
  \end{align*}
  We can reorganize this equality into the claimed result as~$r$ and~$r'$ are diagonal.
\end{proof}

In view of \cref{prop:norm-matrix-hilbert}, we say that the pair $(A,A')$ is
\emph{generic} if the $3\times 2$ matrices
$D\Psi_{\beta,L}(j, j') \cdot \Cov(\djdtau)(E) \cdot T$ and
$D\Psi_{\beta,R}(j,j')\cdot \Cov(\djdtau)(F) \cdot T$ have rank~$2$. In this
case, we can indeed recover~$(d\isog)^2$ from the derivatives of modular
equations. However, in contrast with the Siegel case, we obtain two possible
candidates for~$\pm d\isog$ as we have to extract two uncorrelated square
roots.

We now address the question of constructing a Hilbert-normalized curve equation
from the input of the Igusa invariants $(j_1,j_2,j_3)$ of a p.p.~abelian
surface~$(A,\iota)$ with real multiplication by~$\Z_K$. Note that we are
missing some information, as the two pairs $(A,\iota)$ and $(A,\overline{\iota})$,
where~$\overline{\iota}$ denotes the real conjugate of~$\iota$, have the same Igusa
invariants. The best we can hope for is thus to obtain a \emph{potentially
  Hilbert-normalized} curve in the following sense.

\begin{defn}
  We say that a genus~$2$ curve equation~$E$ is \emph{potentially
    Hilbert-normalized} if there exists a real multiplication embedding
  $\iota:\Z_K\embeds \End^\dagger(\Jac(\Crv_E))$ such
  that~$(\Jac(\Crv_E),\iota,\omega_E)$ is Hilbert-normalized.
\end{defn}

Generically, we can use the derivatives of the Igusa invariants to characterize
potentially Hilbert-normalized curve equations.

\begin{prop}
  \label{prop:tangent-humbert}
  Let~$E$ be a genus~$2$ curve equation such that~$\Jac(\Crv_E)$ has real
  multiplication by~$\Z_K$. Let~$(j_1,j_2,j_3)$ denote its Igusa invariants,
  and assume that the matrix $\Cov(\djdtau)(E)$ is invertible. Then~$E$ is
  potentially Hilbert-normalized if and only if the two columns of
  the~$3\times 2$ matrix $\Cov(\djdtau)(E)\cdot T$ are tangent vectors to the
  Humbert surface at $(j_1,j_2,j_3)$.
\end{prop}

\begin{proof}
  Let $t, \tau, \eta$ and~$r$ be as in
  \cref{lem:dt-dtau}. Since~$\Cov(\djdtau)(E)$ is invertible, the directions
  \begin{displaymath}
    R^t r \mat{1}{0}{0}{0} r^{\tp} R \quad\text{and}\quad
    R^t r \mat{0}{0}{0}{1} r^{\tp} R
  \end{displaymath}
  are tangent to the Humbert surface at~$\tau$ (i.e.~lie inside the image
  of~$\Half_1^2$ by the Hilbert embedding) if and only if the two columns
  $\Cov(\djdtau)(E)\cdot T$ are tangent to the algebraic Humbert surface at
  $(j_1,j_2,j_3)$. By the expression of the Hilbert embedding, this happens if
  and only if both $r (\begin{smallmatrix} 1&0\\0&0 \end{smallmatrix})r^\tp$
  and $r (\begin{smallmatrix} 0&0\\0&1 \end{smallmatrix})r^\tp$ are
  diagonal. This is equivalent to saying that~$r$ is is either diagonal or
  anti-diagonal, in other words~$E$ is potentially Hilbert-normalized.
\end{proof}

Assume that we are given the equation of the Humbert surface for~$K$ in terms of the Igusa
invariants: this precomputation depends only on~$K$.  Given a tuple of
Igusa invariants $(j_1,j_2,j_3)$ on the Humbert surface such that the
genericity condition of \cref{prop:tangent-humbert} is satisfied, the following
algorithm reconstructs a potentially Hilbert-normalized curve equation; its
correctness follows from \cref{lem:hyperell-isomorphism}.

\begin{algo}~
  \label{algo:hilb-curve-2}
  \begin{enumerate}
  \item Construct a curve equation~$E_0$ such that $\Jac(\Crv_{E_0})$ has
    Igusa invariants $(j_1,j_2,j_3)$ using Mestre's
    algorithm~\cite{mestreConstructionCourbesGenre1991}.
  \item \label{step:tangent-humbert} Find $r\in \GL_2(\C)$ such
    that the two columns of the matrix
    \begin{displaymath}
      \Cov(\djdtau)(E_0)\cdot \Sym^2(r^\tp)\cdot T
    \end{displaymath}
    are tangent to the Humbert surface at $(j_1,j_2,j_3)$.
  \item Output $\det^{-2}\Sym^6(r)\,E_0$.
  \end{enumerate}
\end{algo}

In step~\ref*{step:tangent-humbert}, if $a,b,c,d$ denote the entries of~$r$, we
only have to solve a quadratic equation in $a,c$, and a quadratic equation in
$b, d$. Therefore, \Cref{algo:hilb-curve-2} costs~$O_K(1)$ field operations
and~$O(1)$ square roots.

In practice, when computing a $\beta$-isogeny $\isog\from A\to A'$ in the
Hilbert case, we are only given the Igusa invariants of~$A$ and~$A'$, or
possibly a genus~$2$ curve equation. Constructing potentially
Hilbert-normalized curve equations $E,F$ then amounts to making a choice
of real multiplication embedding for each abelian surface (namely, the
embeddings for which~$E$ and~$F$ are Hilbert-normalized). If these embeddings
are incompatible via~$\isog$, we obtain antidiagonal matrices when attempting
to compute the tangent matrix with \cref{prop:norm-matrix-hilbert}; in this
case, we apply the change of variables $x\mapsto 1/x$ on~$E$ or~$F$ to make
them compatible. After that, $\isog$ will be either a $\beta$- or a
$\conj{\beta}$-isogeny depending on the choices of real multiplication
embeddings. In total, we obtain four possible candidates for the tangent matrix
up to sign.

\section{Moduli spaces and the deformation map}
\label{sec:moduli}

In this section, we use the language of moduli stacks to give an algebraic
interpretation of the results in \cref{sec:cov} and to generalize them to
isogenies between abelian schemes of any dimension over any base. We also give
precise conditions guaranteeing genericity in the sense of \cref{def:generic}.

Another way to generalize the previous computations to arbitrary fields (say)
would be to lift the isogeny to characteristic zero and invoke the
complex-analytic computations there. The reader who is satisfied with this
direct argument (and the genericity assumption) may directly skip to
\Cref{sec:alg}. However, we think that the moduli-theoretic approach provides
more geometric insight.

In~§\ref{subsec:moduliav}, we recall general facts on moduli stacks of
p.p.~abelian varieties. In~§\ref{subsec:moduli-deformation}, we formally define
the deformation map attached to an isogeny and compare its incarnations at the
levels of stacks and coarse spaces, thereby obtaining precise conditions for
genericity. In~§\ref{subsec:kodaira-spencer}, we introduce the Kodaira--Spencer
isomorphism and use it to reinterpret results from \cref{sec:cov}, in
particular the relation between the tangent and deformation matrices
(\cref{prop:norm-matrix}). In~§\ref{subsec:moduli-covariants}, we recast the
definition of covariants in the algebraic setting to show that the formulas to
evaluate $\Cov(DJ)(E)$ hold over any base. Finally, we treat the Hilbert case
in~§\ref{subsec:moduli-hilbert}.

\subsection{Moduli stacks of abelian varieties}
\label{subsec:moduliav}

We denote by~$\AAg$ the moduli stack of p.p.~abelian varieties of
dimension~$g$, and by~$\AAgn$ the moduli stack of p.p.~abelian varieties of
dimension~$g$ with a level~$n$ symplectic structure, defined
over~$\Z[1/n]$~\cite{faltingsDegenerationAbelianVarieties1990}.  Both~$\AAg$
and~$\AAgn$ are separated Deligne--Mumford stacks, and~$\AAgn$ is smooth
over~$\Z[1/n]$ with~$\phi(n)$ geometrically irreducible fibers.

We denote by $\Ag$, $\Agn$ their corresponding coarse moduli spaces. By
Mumford's geometric invariant theory
\cite{mumfordGeometricInvariantTheory1994}, they are quasi-projective schemes.
We can extend $\Agn$ over~$\Z$ by taking the normalization of~$\Ag$
in~$\Agn/\Z[1/n]$, as in \cite{mumfordStructureModuliSpaces1971,
  deligneSchemasModulesCourbes1973, dejongModuliSpacesPolarized1993}.
Over~$\C$, the analytification of~$\AAg$ is the Siegel
space~$\Half_g/\Sp_{2g}(\Z)$ seen as an orbifold, generalizing the setting
of~§\ref{subsec:siegel}. If $n \geq 3$, then~$\AAgn$ has trivial inertia, so
$\AAgn$ is isomorphic to its coarse space~$\Agn$, and~$\Agn$ is smooth
over~$\Z[1/n]$.  If $n \leq 2$, then the generic inertia group on~$\AAgn$
is~$\mu_2 = \{\pm 1\}$.

The moduli stack~$\AAgl$ parametrizing $\ell$-isogenies can be constructed as
follows. Let~$\Gamma^0(\ell) \subset \Sp_{2g}(\smash{\Zhat})$ be the congruence
subgroup encoding $\ell$-isogenies, defined as
in~§\ref{subsec:modpol}. Then~$\AAgl$ is the quotient stack~$[\AAGL/\Gamma']$,
where~$\Gamma'$ denotes the image of~$\Gamma^0(\ell)$
in~$\Sp_{2g}(\Z/\ell\Z)$. It is smooth over~$\Z[1/\ell]$. The maps
$\AAGL \to \AAgl$ and $\AAgl \to \AAg$ are finite, étale, and representable
\cite[\S IV.2 and~\S IV.3]{deligneSchemasModulesCourbes1973}.  We can extend
the coarse space~$\Agl$ to~$\Z$ by normalization, as we did for~$\Agn$.

One can also define Siegel modular forms algebraically on~$\AAg$.  Let
$\pi\from \XXg \to \AAg$ be the universal abelian variety. The vector bundle
\begin{displaymath}
  \Hodge = \pi_{\ast} \Omega^1_{\XXg/\AAg}
\end{displaymath}
over~$\AAg$, which is dual to~$\Lie_{\XXg/\AAg}$, is called the \emph{Hodge
  bundle}. If~$\rho$ is a representation of $\GL_g$, a Siegel modular form of
weight~$\rho$ is a section of $\rho(\Hodge)$; in particular, a scalar-valued
modular form of weight~$k$ is a section of $(\wedge^g \Hodge)^{\otimes k}$. In
other words, a Siegel modular form~$f$ can be seen as a map
\begin{displaymath}
  (A, \omega) \mapsto f(A, \omega)
\end{displaymath}
where~$A$ is a point of~$\AAg$ and~$\omega$ is a basis of differential forms
on~$A$, with the following property: if $\eta\from A \to A'$ is an isomorphism,
and~$r\in\GL_g$ is the matrix of~$\eta^*$ in the bases~$\omega',\omega$, then
$f(A', \omega)=\rho(r) f(A,\omega')$.  The link with classical modular forms
over~$\C$ is the following: if $\tau \in \Half_g$, then we define
\begin{displaymath}
  f(\tau)=f\bigl(\C^g/(\Z^g+\tau \Z^g),
  (2 \pi i \,dz_1, \ldots, 2 \pi i\,dz_g)\bigr).
\end{displaymath}
This choice of basis is made so that the $q$-expansion principle holds
\cite[p.\,141]{faltingsDegenerationAbelianVarieties1990}. We already
used it to define~$f(A,\omega)$ over~$\C$ in §\ref{subsec:siegel}.
The canonical line bundle
$\wedge^g \Hodge$ is ample, so modular
forms give local coordinates on $\Ag$.

In the case $g=2$, the structure of the coarse moduli space~$\Atwo$ has been
worked out explicitly~\cite{igusaArithmeticVarietyModuli1960,
  igusaRingModularForms1979}. In particular, the modular
forms~$\psi_4,\psi_6,\chi_{10},\chi_{12}$ from \cref{thm:siegel-structure} are
defined over~$\Z$. The Jacobian locus $\Mtwo$ consisting of Jacobians of
hyperelliptic curves is the open subscheme of $\Atwo$ defined
by~$\chi_{10}\neq 0$. The Igusa invariants $j_1, j_2, j_3$ have bad reduction
modulo~$2$ and do not generate the function field of~$\Mtwo$ modulo~$3$.
Over~$\Z[1/6]$ however, they define a birational map, and more precisely an
isomorphism from $\mathbf{U} = \{\psi_4\chi_{10} \ne 0\}\subset \Mtwo$ to
$\{j_3 \ne 0\}\subset\Avar^3$.

\subsection{The deformation map}
\label{subsec:moduli-deformation}

Consider the map
\begin{align*}
  \PPHI_{\ell} = (\PPHI_{\ell,1},\PPHI_{\ell,2})\from \AAgl &\to \AAg \times \AAg\\
  A &\mapsto (A,A/K).
\end{align*}
It induces a map at the level of coarse moduli spaces, denoted by
\begin{displaymath}
  \PHI_{\ell} = (\PHI_{\ell,1},\PHI_{\ell,2})\from \Agl \to \Ag\times \Ag.
\end{displaymath}
We now study the relations between~$\PPHI_\ell$, $\PHI_\ell$ and modular
equations in detail in order to give precise conditions that guarantee the
genericity of an isogeny in the sense of \cref{def:generic} over any
field~$k$. An overview is as follows:
\begin{enumerate}
\item \label{overview:etale}
  At the level of stacks over~$\Z[1/\ell]$, $\PPHI_{\ell,1}$
  and~$\PPHI_{\ell,2}$ are always finite étale, so there exists a deformation
  map $d\PPHI_{\ell,2}\circ d\PPHI_{\ell,1}^{-1}$ attached to every
  $\ell$-isogeny~$\isog$.
\item \label{overview:stabilizer}
  At points where~$\PPHI_{\ell,1}$ and~$\PPHI_{\ell,2}$ are
  \emph{stabilizer-preserving}, we can compute this deformation map directly at
  the level of the coarse space~$\Agl$.
\item \label{overview:generic}
  If further the domain and codomain of~$\isog$ have \emph{generic
    automorphisms}, then we can compute the deformation map as
  $d\PHI_{\ell,2}\circ d\PHI_{\ell,1}^{-1}$.
\item \label{overview:modeq}
  Under the assumptions of~\eqref{overview:generic}, the deformation map
  can be computed from a suitable normalization of the Siegel modular equations. In
  particular, if~$\isog$ corresponds to a normal point in the image
  of~$\PHI_\ell$, then~$\isog$ is generic.
\end{enumerate}

Item~\eqref{overview:etale} concretely means that the deformation map can
always be computed after adding sufficient structure to rigidify the stacks
involved, a costly procedure in general. The additional assumptions listed make
the computations more and more tractable, at the expense of introducing
new exceptions.

We begin with definitions, assuming all our stacks to be separated
Deligne--Mumford stacks. We denote by $I_\XX$ the inertia stack of a
stack~$\XX$. If~$x$ is a point of~$\XX$, we denote by~$I_x$ the fiber of
$I_\XX$ at~$x$, in other words the finite group of automorphisms of~$x$. We say
that a point~$x$ of~$\AAg$ has \emph{generic automorphisms} if $I_x = \mu_2$,
or equivalently if the abelian variety~$A$ corresponding to~$x$ satisfies
$\Aut(A) = \{\pm 1\}$. Points with generic automorphisms form an open substack
of~$\AAg$.

Let $f\from \XX \to \YY$ be a morphism of stacks. Then~$f$ is representable if
and only if the map $I_{\XX} \to \XX \times_{\YY} I_{\YY}$ induced by~$f$ is a
monomorphism \stackcite{04YY}.  We then say that~$f$ is
\emph{stabilizer-preserving} at~$x$ if the monomorphism on inertia
$I_x \to I_{f(x)}$ induced by~$f$ is an isomorphism.

The following proposition accounts for step~\eqref{overview:etale} of the
overview, and characterizes points where the maps~$\PPHI_{\ell,i}$ are
stabilizer-preserving.

\begin{prop} Let~$\ell$ be a prime.
  \label{prop:moduli-etale}
  \begin{enumerate}
    \item The maps~$\PPHI_{\ell,1}$ and~$\PPHI_{\ell,2}$ are finite, étale and
    representable over~$\Z[1/\ell]$.
  \item Let~$k$ be a field of characteristic distinct from~$\ell$. Let
    $x \in \AAgl(k)$ be a point represented by~$(A,K)$, and let $K'\subset A/K$
    be the kernel of the dual isogeny. Then~$\PPHI_{\ell,1}$ is
    stabilizer-preserving at~$x$ if and only if all automorphisms of~$A$
    stabilize~$K$, and~$\PPHI_{\ell,2}$ is stabilizer-preserving at~$x$ if and
    only if all automorphisms of~$A/K$ stabilize~$K'$.
  \end{enumerate}
\end{prop}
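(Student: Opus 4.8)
The plan is to treat the two parts separately, using the moduli-theoretic description of $\AAgl$ as $[\AAgnl/\widetilde{\Gamma_0(\ell)}]$ for $n\geq 3$ together with the general facts recalled in §\ref{subsec:coarse}. For part~(1), I would argue as in the discussion preceding the proposition: fix $n\geq 3$ coprime to $\ell$, so that $\AAgnl$ is a common finite étale presentation of $\AAgl$, $\AAgone$ (via the two projections $\PPHI_{\ell,i}$ lifted to level~$n$) and indeed of $\AAgone\times\AAgone$. Concretely, a level $n$ structure on $A$ together with a choice of $\ell$-kernel $K$ induces a level $n$ structure on $A/K$ (the image of the chosen symplectic basis under the isogeny, suitably rescaled by the polarization), so the composite $\AAgnl \to \AAgl \xrightarrow{\PPHI_{\ell,i}} \AAgone$ factors through $\AAgn \to \AAgone$, which is finite étale; descending along the finite étale cover $\AAgnl\to\AAgl$ shows $\PPHI_{\ell,i}$ is finite and étale. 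Representability is immediate because $I_{\AAgl/\AAgone}$ is a monomorphism: an automorphism of $(A,K)$ mapping to the trivial automorphism of $A$ under $\PPHI_{\ell,1}$ (resp. of $A/K$ under $\PPHI_{\ell,2}$) is already the identity on $A$, so the induced map on inertia $I_x \to \AAgl\times_{\AAgone} I_{\AAgone}$ is a monomorphism, and by \stackcite{04YY} this is exactly representability. Since $\PPHI_{\ell,i}$ is also unramified (being étale), by \stackcite{0CIS} the map on inertia is in fact an open immersion, which sets up part~(2).

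For part~(2), the point is to make explicit what "stabilizer preserving" means for the map $\PPHI_{\ell,1}$. By definition $\PPHI_{\ell,1}$ is stabilizer preserving at $x=(A,K)$ iff the monomorphism $I_x \hookrightarrow I_{\PPHI_{\ell,1}(x)} = \Aut(A)$ is an isomorphism. Now $I_x = \Aut(A,K)$ is precisely the subgroup of $\Aut(A)$ consisting of automorphisms that preserve the subgroup scheme $K$ (compatibly with the polarization, which is automatic since automorphisms of a principally polarized abelian variety preserve the polarization and $K$ is an $\ell$-kernel, i.e. maximal isotropic in $A[\ell]$ — so stabilizing $K$ as a subgroup suffices). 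Hence $I_x \to \Aut(A)$ is surjective, i.e.\ an isomorphism, exactly when every automorphism of $A$ stabilizes $K$. For $\PPHI_{\ell,2}$ the argument is identical after observing that $\PPHI_{\ell,2}(x) = (A/K)$ and that the natural map $\Aut(A,K) \to \Aut(A/K)$ identifies $\Aut(A,K)$ with $\Aut(A/K, K')$, where $K' = \ker(A/K \to A)$ is the kernel of the contragredient isogeny: an automorphism of $A$ preserving $K$ descends to an automorphism of $A/K$ preserving $K'$, and conversely an automorphism of $A/K$ preserving $K'$ lifts (via the quotient $A/K \to (A/K)/K' \simeq A$) to an automorphism of $A$ preserving $K$. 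Thus $I_x \to \Aut(A/K)$ is an isomorphism iff every automorphism of $A/K$ stabilizes $K'$.

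The main obstacle I anticipate is the bookkeeping in the second part: one must be careful that $I_x$ computed inside $\AAgl$ genuinely equals the stabilizer of $(A,K)$ as a rigidified object — in particular that the $\mu_2$-gerbe issue for $g\leq 2$, $n\leq 2$ does not interfere — and that the identifications $\Aut(A,K) \simeq \Aut(A/K,K')$ via the contragredient isogeny are canonical and compatible with the inertia maps induced by $\PPHI_{\ell,1}$ and $\PPHI_{\ell,2}$. This is where I would spend the most care; concretely, I would phrase everything at the level of $S$-points of the inertia stacks and use that the isogeny $A \to A/K$ and its contragredient $A/K \to A$ are mutually dual with kernels $K$ and $K'$, the composite being multiplication by $\ell$, so that conjugation by these isogenies gives mutually inverse isomorphisms between the relevant automorphism groups. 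The remaining details — that stabilizing $K$ automatically respects the Weil pairing and the polarization — are routine given \Cref{thm:NS-End} and the definition of an $\ell$-isogeny in \Cref{def:beta-isog}.
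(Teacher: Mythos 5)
Your proposal is correct and follows essentially the same route as the paper: part~(2) and representability are argued identically, via the identification of $I_x$ with the automorphisms of~$A$ stabilizing~$K$ and the descent/lifting of automorphisms through the isogeny and its contragredient. The only minor divergence is in establishing that $\PPHI_{\ell,2}$ is finite étale — you transport level-$n$ structures through the isogeny on a rigid finite étale presentation, whereas the paper obtains $\PPHI_{\ell,2}$ from $\PPHI_{\ell,1}$ by composing with the map induced by the universal isogeny over~$\AAgl$; both are standard and amount to the same thing.
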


\begin{proof}
  Let~$x$ be a point of~$\AAgl$ corresponding to a pair~$(A,K)$ in the moduli
  interpretation. The automorphisms of~$x$ in $\AAgl$ are exactly the
  automorphisms of~$A$ stabilizing~$K$. In particular~$\PPHI_{\ell,1}$ is
  representable, and it is stabilizer-preserving at~$x$ if and only if all
  automorphisms of~$A$ stabilize~$K$. The map~$\PPHI_{\ell,1}$ is finite étale
  by construction of~$\AAgl$.

  Any automorphism of $(A,K)$, descends to $A' = A/K$, so~$\PPHI_{\ell,2}$ is
  representable as well. An automorphism of~$A'$ comes from an automorphism
  of~$(A,K)$ if and only if it stabilizes~$K'$, hence the condition
  for~$\PPHI_{\ell,2}$ to be stabilizer-preserving.  We finally prove that
  $\PPHI_{\ell,2}$ is finite étale. Denote by $\pi_1\from \XXg \to \AAg$
  the universal abelian scheme, and by $\pi_{\ell}\from \XXgl \to \AAgl$ the
  universal abelian scheme with a $\Gamma^0(\ell)$-level structure. Then the
  universal isogeny $f\from \XXgl \to \XXg \times_{\AAg} \AAgl$ is
  separable over~$\Z[1/\ell]$. Let $s_1\from \AAg\to \XXg$ and
  $s_{\ell}\from\AAgl\to \XXgl$ be the zero sections. Then
  \begin{displaymath}
    \PPHI_{\ell,2}=\PPHI_{\ell,1} \circ \pi_1 \times_{\AAg} \AAgl
    \circ f \circ s_{\ell}.
  \end{displaymath}
  so $\PPHI_{\ell,2}\from \AAgl \to \AAg$ is finite étale as well.
\end{proof}

The next proposition accounts for step~\eqref{overview:stabilizer} in the
overview. From now on, if~$x$ is a point of~$\AAgl$ or~$\AAg$, we denote
by~$\mathbf{x}$ its reduction to the coarse moduli space.

\begin{prop}
  \label{prop:moduli-stabilizer}
  Let~$i=1$ or~$2$.  Let~$x$ be a $k$-point of~$\AAgl$, and assume
  that~$\PPHI_{\ell,i}$ is stabilizer-preserving at~$x$. Then~$\PHI_{\ell,i}$
  is strongly étale at~$\mathbf{x}$, in other words we have étale-locally
  around~$\mathbf{x}$
  \begin{displaymath}
    \AAgl = \Agl \underset{\Ag}{\times} \AAg.
  \end{displaymath}
  The point~$\mathbf{x}$ is smooth in~$\Agl$ if and only if~$\PHI_{\ell,i}(\mathbf{x})$
  is smooth in~$\Ag$.
\end{prop}

\begin{proof}
  By \cref{prop:moduli-etale}, $\PPHI_{\ell,i}$ is finite étale. The étaleness
  of~$\PHI_{\ell,i}$ at a stabilizer-preserving point then comes from Luna's
  fundamental lemma: see e.g.~\cite[Prop.~6.5 and
  Thm.~6.10]{rydhExistencePropertiesGeometric2013}. Strong étaleness comes
  from the cartesian diagram in
  \cite[Thm.~6.10]{rydhExistencePropertiesGeometric2013}, and directly implies
  the last statement in the proposition.
\end{proof}

Under the assumptions of \cref{prop:moduli-stabilizer}, if~$\PPHI_{\ell,1}(x)$
is represented by an abelian variety~$A$ defined over~$k$, then the isogeny
$\isog\from A \to A'$ representing~$x$ is also defined over~$k$ by the same
reasoning as \cite[\S VI.3.1]{deligneSchemasModulesCourbes1973}. Indeed, if
$(A,K)$ represents~$x$ over~$\kbar$, the obstruction for $(A,K)$ to descend
over~$k$ is given by an element in $H^2(\Spec k, \Aut(x))$.  But this
obstruction vanishes since~$\PPHI_{\ell,1}(x)$ is represented by~$A/k$, and the
automorphism groups of~$x$ and~$\PPHI_{\ell,1}(x)$ are equal.

\begin{rem}
  \label{rem:stabpreserving}
  Concretely, \cref{prop:moduli-stabilizer} could be used in computations as
  follows.  Let~$x$ be a $k$-point of~$\AAgl$ where both~$\PPHI_{\ell,1}$
  and~$\PPHI_{\ell,2}$ are stabilizer-preserving, and let~$\mathbf{x}$ be its
  image in~$\Agl$.  For~$i\in\{1,2\}$, let
  $\mathbf{y}_i = \PHI_{\ell,i}(\mathbf{x})$, and let $y_i$ be a lift of
  $\mathbf{y}_i$ to~$\AAg$.  Let~$G=I_x$ be the common automorphism group of
  these objects. Finally, suppose that $\mathbf{x}$ is smooth in $\Agl$ (equivalently,
  $\mathbf{y}_1$ or $\mathbf{y}_2$ is smooth in $\Ag$).  By strong étaleness, the maps
  \begin{displaymath}
    d\PHI_{\ell,i}: T_{\mathbf{x}}(\Agl)\to T_{\mathbf{y}_i}(\Ag)
  \end{displaymath}
  for $i\in\{1,2\}$ are isomorphisms.

  Let~$B_1$ be the completed local ring of~$\AAg$ at~$y_1$. By \cite[\S
  I.8.2.1]{deligneSchemasModulesCourbes1973}, the completed local ring
  of~$\Ag$ at~$\mathbf{y}_1$ is~$B_1^G$.  Therefore, given $m = g(g+1)/2$
  uniformizers $u_1,\ldots, u_m$ of~$\AAg$ at~$y_1$, we obtain $g(g+1)/2$
  uniformizers of~$\Ag$ at~$\mathbf{y}_1$ as $G$-invariant polynomials in
  $u_1,\ldots,u_m$. Assume that such uniformizers of~$\Ag$ have been computed
  at both~$\mathbf{y}_1$ and~$\mathbf{y}_2$. Then we can recover the
  deformation map at the level of stacks from the maps~$d\PHI_{\ell,i}$ up to
  an action of non-generic elements of~$G$, i.e.~up to choosing other
  lifts~$y_1$ and~$y_2$.
  
  In practice, it may be more convenient to work at the level of stacks to
  recover the deformation map directly rather than using $G$-invariant
  uniformizers on~$\Ag$. A key factor in this choice is the degree of the field
  extension we have to consider in order to rigidify the stack. For instance,
  if~$A$ is an abelian surface and~$k$ is a finite field, we can give~$A$ a
  full level~$2$ structure over an extension of degree at most~$6$; over a
  number field, this could take an extension of degree up to~$720$.
\end{rem}

Under the additional assumption of generic
automorphisms~\eqref{overview:generic}, computing the deformation map becomes
considerably easier.

\begin{prop}
  \label{prop:moduli-generic}
  Let~$x$ be a $k$-point of~$\AAgl$, and assume that both~$\PPHI_{\ell,1}(x)$
  and~$\PPHI_{\ell,2}(x)$ have generic automorphisms. Then:
  \begin{enumerate}
  \item \label{it:stab-pres}
    Both~$\PPHI_{\ell,1}$ and~$\PPHI_{\ell,2}$ are stabilizer-preserving at~$x$.
  \item \label{it:smooth1}
    Both~$\PHI_{\ell,1}(\mathbf{x})$ and~$\PHI_{\ell,2}(\mathbf{x})$ are smooth points
    of~$\Ag$, and the map $\AAg\to \Ag$ is étale at these points.
  \item \label{it:smooth2}
    The point~$\mathbf{x}$ is smooth in~$\Agl$, and the map $\AAgl\to \Agl$ is étale
    at~$\mathbf{x}$.
  \item \label{it:diagram}
    We have a commutative diagram
    \begin{displaymath}
      \begin{tikzcd}
        \tangent{\PPHI_{\ell,1}(x)}{\AAg} \ar[d] &
        \tangent{x}{\AAgl}
        \ar[r, "d\PPHI_{\ell,2}"] \ar[l, swap, "d\PPHI_{\ell,1}"] \ar[d]&
        \tangent{\PPHI_{\ell,2}(x)}{\AAg} \ar[d] \\
        \tangent{\PHI_{\ell,1}(\mathbf{x})}{\Ag} &
        \tangent{\mathbf{x}}{\Agl}
        \ar[r, "d\PHI_{\ell,2}"] \ar[l, swap, "d\PHI_{\ell,1}"]&
        \tangent{\PHI_{\ell,2}(\mathbf{x})}{\Ag}
      \end{tikzcd}
    \end{displaymath}
    where the vertical arrows are isomorphisms induced by $\AAgl\to\Agl$ and
    $\AAg\to\Ag$. In particular, the deformation map of the isogeny~$\isog$
    attached to~$x$ is
    $\defo(\isog)= d{\PHI_{\ell,2}}(\mathbf{x}) \circ d{\PHI_{\ell,1}}^{-1}(\mathbf{x})$.
  \end{enumerate}  
\end{prop}

\begin{proof} Item~\eqref{it:stab-pres} follows from the
  definitions. For~\eqref{it:smooth1}, let~$y = \PPHI_{\ell,1}(x)$. Since~$y$
  has generic automorphisms, the map $[\AAg/\mu_2]\to \Ag$ is an isomorphism
  étale-locally around~$\mathbf{y}$, by general facts on the étale-local structure of
  stacks \cite[Lem.~2.2.3]{abramovichCompactifyingSpaceStable2002},
  \cite[Thm.~2.12]{olssonUnderlineRmHom2006}.  The conclusion follows
  since~$\AAg \to [\AAg/\mu_2]$ is étale. Item~\eqref{it:smooth2}
  similarly follows from the fact that~$\AAgl\to \Agl$ is an isomorphism
  étale-locally around~$\mathbf{x}$. Finally, \eqref{it:smooth1} and \eqref{it:smooth2}
  imply \eqref{it:diagram}.
\end{proof}

In the setting of \cref{prop:moduli-generic}, performing a change of
uniformizers as sketched in \cref{rem:stabpreserving} is no longer necessary.

We finally proceed to step~\eqref{overview:modeq} in the overview, and
investigate the relations between the coarse map~$\PHI_\ell$ and modular
equations. The map~$\PHI_\ell$ is not injective, but reasoning as in \cite[\S
VI.6]{deligneSchemasModulesCourbes1973} shows that it induces a birational
isomorphism to its image. The open subscheme of~$\Agl$ where~$\PHI_{\ell}$ is
an embedding is dense in every fiber of characteristic $p \nmid \ell$. We
denote by~$\PHII$ the schematic image of~$\PHI_\ell$, and denote by
$p_1,p_2\from\PHII\to\Ag$ the two projections. When~$g=2$, the modular
equations~$\Psi_{\ell,i}$ from §\ref{subsec:modpol} are equations for the image
of $\PHII \cap (\mathbf{U}\times \mathbf{U})$ in $\Avar^3 \times \Avar^3$ via
the Igusa invariants $j_1,j_2,j_3$.

\begin{prop}
  \label{prop:moduli_normal}
  The scheme~$\Agl$ is the normalization of~$\PHII$.  Thus, if~$\mathbf{x}_0$
  is a point of~$\PHII$, then~$\PHI_\ell\from \Agl \to \PHII$ induces a local
  isomorphism around~$\mathbf{x}_0$ if and only if~$\mathbf{x}_0$ is normal
  in~$\Psi_0$.
\end{prop}

\begin{proof}
  The map $\Agl \to \PHII$ is separated and quasi-finite, and is birational by
  the above discussion. The scheme~$\Agl$ is normal because~$\AAgl$ is normal,
  as seen from the description of its completed local rings~\cite[\S
  I.8.2.1]{deligneSchemasModulesCourbes1973}. We deduce that~$\Agl$ is the
  normalization of~$\PHII$ by Zariski's main theorem
  \cite[Cor.~IV.8.12.11]{grothendieckElementsGeometrieAlgebrique1964}.
\end{proof}

Combining \cref{prop:moduli-generic,prop:moduli_normal}, we obtain
the following conclusion.

\begin{cor}
  \label{cor:moduli-modeq}
  Let~$x$ be a $k$-point of~$\A_g(\ell)$ corresponding to an
  $\ell$-isogeny~$\isog$, and let~$\mathbf{x}_0=\PHI_\ell(\mathbf{x})$. Assume
  that both~$\PPHI_{\ell,1}(x)$ and~$\PPHI_{\ell,2}(x)$ have generic
  automorphisms and that~$\Psi_0$ is normal at~$\mathbf{x}_0$. Then the
  deformation map $\defo(\isog)$ can be computed as
  $d{p_2}(\mathbf{x}_0) \circ d{p_1}(\mathbf{x}_0)^{-1}$. If
  further~$\mathbf{x}_0\in \mathbf{U}\times \mathbf{U}$, then~$\defo(\isog)$
  can be computed from the derivatives of the Siegel modular equations at the
  point~$\mathbf{x}_0$ seen in~$\Avar^3\times\Avar^3$.
\end{cor}

\begin{rem}
  We have the following characterization of non-normal points on~$\PHII$,
  generalizing the remark of
  \cite[p.\,248]{schoofCountingPointsElliptic1995}. Let~$k$ be a field of
  characteristic~$p>0$, and let~$\mathbf{x}_0$ be a $k$-point of~$\PHII$.  We
  remark that $\Psi_0 \otimes k$ is reduced (because the generic automorphisms
  over $k$ are $\{\pm 1\}$ hence the generic points are smooth), so satisfies
  Serre's conditions $S_1$ and $R_0$ \stackcite{031R}.  Normality is equivalent
  to Serre's conditions $S_2$ and~$R_1$ \stackcite{031S}.  Let $\xi$ be a point
  specializing to~$x_0$ and of codimension~$1$ (resp.~2). If $\xi$ is of
  characteristic~$p$, it is of codimension~$0$ (resp.~$1$) in
  $\Psi_0 \otimes k$, hence satisfies Serre's conditions.  So $\mathbf{x}_0$ is
  normal in $\Psi_0\otimes k$ if and only if every lift~$\xi$ of~$\mathbf{x}_0$
  of characteristic~$0$ is normal.

  Now assume that $\PPHI_{\ell,1}$ is stabilizer-preserving at~$x \in \AAgl$,
  let $\mathbf{x}_0=\PHI_{\ell}(\mathbf{x})$ and assume that
  $\PHI_{\ell,1}(\mathbf{x}) \in \Ag$ is smooth.  Then by
  \cref{prop:moduli-stabilizer,prop:moduli_normal}, $\mathbf{x}_0$ is smooth in
  $\PHII$ if and only if $p_1$ is étale at $\mathbf{x}_0$, if and only if
  $\mathbf{x}_0$ is normal in $\PHII$. Hence, by the above discussion,
  $\mathbf{x}_0$ is singular if and only if it is the reduction of a singular
  point in characteristic~$0$.
\end{rem}

\subsection{The Kodaira--Spencer isomorphism}
\label{subsec:kodaira-spencer}

Let~$A\to S$ be a proper abelian scheme, and assume for simplicity that~$S$ is
smooth over~$\Z[1/2]$. Its associated \emph{Kodaira--Spencer map} was first
introduced in \cite{kodairaDeformationsComplexAnalytic1958}; we refer to
\cite[\S III.9]{faltingsDegenerationAbelianVarieties1990} and \cite[\S
1.3]{andreKodairaSpencerMap2017} for more details. This map takes the form
\begin{displaymath}
  \kappa\from T_S \to \Sym^2 \Lie_S (A)
  = \Hom_{\Sym}\bigl(\Lie_S(A)^\dualv, \Lie_S(A^\dualv)\bigr),
\end{displaymath}
where~$T_S$ denotes the tangent bundle on~$S$.  If we apply this construction
to the universal abelian scheme $\XXg \to \AAg$ (or rather, the pullback
of~$\XXg$ to an étale presentation~$S$ of~$\AAg$), the Kodaira--Spencer map is
an isomorphism \cite[\S 2.1.1]{andreKodairaSpencerMap2017}.  In particular, if
$x$ is a $k$-point of~$\AAg$ represented by a p.p.~abelian variety~$A/k$, we
have a canonical isomorphism $\tangent{x}{\AAg} \iso \Sym^2\tangent{0}{A}$.

As a consequence, if~$j$ is a modular invariant (i.e.~a rational
map~$\AAg\to \Avar^1$), then via the Kodaira--Spencer isomorphism, its
differential~$dj$ naturally becomes a Siegel modular function of
weight~$\Sym^2$ in the sense of §\ref{subsec:moduliav}.

Over~$\C$, the Kodaira--Spencer isomorphism can be described explicitly.

\begin{prop}
  \label{prop:analytic_kodaira}
  Let~$V$ be the trivial vector bundle~$\C^g$ on~$\Half_g$, identified
  with the tangent space at~$0$ of the universal abelian
  variety~$A(\tau)$ over~$\Half_g$. Then the pullback of the
  Kodaira--Spencer map $\kappa\from T_{\AAg} \to \Sym^2 \Lie_S \XXg$
  by $\Half_g \to \A_g^{\mathrm{an}}$ is an isomorphism
  $T_{\Half_g} \iso \smash{\Sym^2} V$ given by
  \begin{displaymath}
    \kappa \Bigl( \frac{1+\smash{\delta_{jk}}}{2\pi i} \frac{\partial}{\partial \tau_{jk}} \Bigr)
    = \frac{1}{(2\pi i)^2} \frac{\partial}{\partial z_j}\otimes\frac{\partial}{\partial z_k}.
  \end{displaymath}
  for all $1\leq j,k\leq g$, where $\delta_{jk}$ is the Kronecker symbol.
\end{prop}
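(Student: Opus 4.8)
The plan is to make the Kodaira--Spencer isomorphism completely explicit on the upper half-space by unwinding the definition of the Gauss--Manin connection on the universal family $A(\tau) = \C^g/(\Z^g\oplus\tau\Z^g)$. First I would fix the standard frame for the relative de Rham cohomology: over $\Half_g$, the local system $R^1p_*\C$ has the flat basis given by the dual of the cycles $\gamma_1,\dots,\gamma_g$ (coming from $\Z^g$) and $\gamma_1',\dots,\gamma_g'$ (coming from $\tau\Z^g$), and the Hodge filtration $p_*\Omega^1_{A/\Half_g}$ is spanned by the classes of $dz_1,\dots,dz_g$. Writing a de Rham class in the flat basis and imposing that $dz_j$ pairs to the appropriate period matrix entries, one finds $dz_j = \sum_k \tau_{jk}\,\gamma_k^\vee + \gamma_j'^\vee$ modulo normalization. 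Since the flat sections are precisely those killed by $\nabla$, applying $\nabla$ to this expression gives $\nabla(dz_j) = \sum_k d\tau_{jk}\otimes \gamma_k^\vee$.

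Next I would push this through the three canonical identifications used to rewrite $\kappa$ in the excerpt. The Kodaira--Spencer map $\kappa: T_S\to R^1p_*T_{A/S}$ is the composite of $\nabla$ restricted to $p_*\Omega^1_{A/S}$ with the projection $R^1p_*\Omega^1_{A/S}\to R^1p_*\OO_A = \Lie_S(A^\vee)$, viewed as a map $T_S\otimes p_*\Omega^1_{A/S}\to \Lie_S(A^\vee)$, i.e.\ as $T_S\to \Lie_S(A)\otimes\Lie_S(A^\vee)$. Concretely, $\partial/\partial\tau_{jk}$ contracted against $\nabla(dz_l)$ produces the class of $\gamma_j^\vee$ (or $\gamma_k^\vee$) in $R^1p_*\OO_A$; under the complex-analytic identification $R^1p_*\OO_A\cong \overline{\Lie_0 A}$ this class is $\partial/\partial z_j$ (resp.\ $\partial/\partial z_k$), and under the principal polarization $A\cong A^\vee$ the frames match up with $dz$. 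Tracking the symmetrization forced by replacing $A^\vee$ by $A$ via the polarization, the off-diagonal entries $\tau_{jk}$ with $j\neq k$ appear twice (once from $dz_j$, once from $dz_k$), which is exactly the source of the factor $(1+\delta_{jk})/2$ in the statement; the two factors of $2\pi i$ come from normalizing the periods, equivalently from the fact that the natural differential forms on $A(\tau)$ in the excerpt are $2\pi i\,dz_j$ rather than $dz_j$ (cf.\ $\omega(\tau)$ in §\ref{subsec:siegel}).

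Finally I would verify directly that the resulting map $T_{\Half_g}\to\Sym^2 V$, $\tfrac{1+\delta_{jk}}{2\pi i}\partial/\partial\tau_{jk}\mapsto \tfrac{1}{(2\pi i)^2}\partial/\partial z_j\otimes\partial/\partial z_k$, is an isomorphism of vector bundles: both sides are trivial bundles of rank $g(g+1)/2$ and the map sends the indicated frame to the standard frame of $\Sym^2 V$, so it is an isomorphism fiberwise, hence globally. That it is the analytification of the algebraic Kodaira--Spencer map follows because both are defined by the Gauss--Manin connection, which is compatible with analytification (GAGA for the relevant coherent sheaves and their connection), so it suffices to check the formula analytically, which is what the computation above does.

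The main obstacle is bookkeeping rather than conceptual: one has to pin down the normalizations in the identifications $R^1p_*\OO_A\cong\overline{\Lie_0 A}$ and $R^1p_*\Omega^1_{A/S}\cong\Lie_S(A^\vee)$, and the polarization-induced symmetrization, carefully enough to get the precise constants $(1+\delta_{jk})/2$ and $(2\pi i)^{-2}$; a sign or factor error here would not affect the qualitative statement but would make the formula wrong as stated. Sorting out these conventions consistently with $\omega(\tau)=(2\pi i\,dz_1,\dots)$ is the delicate part, and I would do it by comparing periods of $dz_j$ against the explicit lattice $\Z^g\oplus\tau\Z^g$.
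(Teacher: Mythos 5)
Your strategy is sound and would work, but it is genuinely different from the paper's. The paper simply cites Andr\'e [\S 2.2] for the statement and then pins down the normalization by a one-line trick: the Kodaira--Spencer class of a deformation can be read off from the deformation of a section of the line bundle inducing the principal polarization, and on $\Half_g\times\C^g\to\Half_g$ one may take that section to be the Riemann theta function, whose $\tau$-derivatives are tied to its $z$-derivatives by the heat equation $2\pi i(1+\delta_{jk})\,\partial\theta/\partial\tau_{jk}=\partial^2\theta/\partial z_j\partial z_k$. That identity hands you the constants $(1+\delta_{jk})/(2\pi i)$ and $1/(2\pi i)^2$ with no further bookkeeping. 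You instead unwind the Gauss--Manin connection on the flat frame of $R^1p_*\C$, compute $\nabla(dz_j)$ from the periods against $\Z^g\oplus\tau\Z^g$, and project to $R^1p_*\OO_A\iso\Lie(A^\dualv)$; this is closer in spirit to what the cited reference actually does, and your explanation of why off-diagonal $\tau_{jk}$ contributes twice (hence the $1+\delta_{jk}$) is the right combinatorial reason.

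The one substantive caveat: since the entire content of the proposition is the exact normalization, deferring the constants to ``careful bookkeeping of the identifications'' leaves the hardest step unexecuted. In particular the identification $R^1p_*\OO_A\iso\Lie_S(A^\dualv)$ and the matching of frames under the principal polarization each carry their own $2\pi i$ conventions, and these must be made consistent with $\omega(\tau)=(2\pi i\,dz_1,\ldots,2\pi i\,dz_g)$ (note that $\tfrac{1}{2\pi i}\partial/\partial z_j$ is the dual frame to $2\pi i\,dz_j$, which is why the right-hand side carries $(2\pi i)^{-2}$). If you want to avoid that chase entirely, the heat-equation argument is the efficient way to certify the constants once the qualitative isomorphism is known.
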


\begin{proof}
  The pullback of the Kodaira--Spencer map is an isomorphism by
  \cite[§2.2]{andreKodairaSpencerMap2017}. Its expression can be obtained by
  looking at the deformation of a section~$s$ of the line bundle on~$\XXg$
  giving the principal polarization.  On $\Half_g \times \C^g \to \Half_g$, we
  can take the Riemann theta function~$\theta$ as a section, and its
  deformation along~$\tau$ is given by the heat equation
  \cite[p.\,9]{cilibertoModuliSpaceAbelian2000}:
  \begin{displaymath}
    2 \pi i (1+\delta_{jk}) \frac{\partial\theta}{\partial \tau_{jk}} =
    \frac{\partial^2
      \theta }{ \partial z_j \partial z_k}. \qedhere
  \end{displaymath}
\end{proof}

From \cref{prop:analytic_kodaira}, we recover that the derivatives of modular
invariants have weight~$\smash{\Sym^2}$ in the sense
of~§\ref{sec:mf}. Moreover, the basis of differential forms~$\omega(\tau)$
from~§\ref{subsec:siegel} and the matrix~$\djdtau$ defined
in~§\ref{subsec:norm-matrix} are correctly normalized.

The Kodaira--Spencer isomorphism allows us to define deformation matrices of
$\ell$-isogenies in an algebraic context, and \Cref{prop:norm-matrix} remains
valid.

\begin{defn}
  \label{def:siegel_defo_matrix}
  Let $k$ be a field of characteristic not~$2$ or~$\ell$, let
  $\isog\from A\to A'$ be an $\ell$-isogeny representing a $k$-point
  of~$\AAgl$, and fix bases of~$\tangent{0}{A}$ and~$\tangent{0}{A'}$ as
  $k$-vector spaces. We call the matrix of the tangent map~$d\isog$ in these
  bases the \emph{tangent matrix} of~$\isog$. By functoriality, this choice of
  bases induces bases of~$\tangent{x}{\AAg}$ and~$\tangent{x'}{\AAg}$ over~$k$,
  where~$x,x'$ are the $k$-points of~$\AAg$ corresponding to~$A$ and~$A'$. We
  call the matrix of the deformation map~$\defo(\isog)$ in these bases the
  \emph{deformation matrix} of~$\isog$. We still denote these matrices
  by~$d\isog$ and~$\defo(\isog)$ when the above choice of bases is understood.
\end{defn}

\begin{prop}
  \label{prop:defo_siegel}
  Let~$\isog$ be as in \cref{def:siegel_defo_matrix}, and let $d\isog$
  and~$\defo(\isog)$ be its tangent and deformation matrices in a choice of
  bases of~$\tangent{0}{A}$ and~$\tangent{0}{A'}$. Then
  \begin{displaymath}
    \Sym^2(d\isog) = \ell \defo(\isog).
  \end{displaymath}
\end{prop}

\begin{proof}
  It suffices to prove this relation for the universal $\ell$-isogeny
  \begin{displaymath}
    \isog\from \XXgl \to \XXg \times_{\AAg} \AAgl
  \end{displaymath}
  over~$\Z[1/2\ell]$. All the line bundles involved are locally free on smooth
  stacks, so are flat over~$\Z$; therefore, since $\Z\to\C$ is injective, it
  suffices to prove the relation over~$\C$.  By rigidity \cite[Prop.~6.1 and
  Thm.~6.14]{mumfordGeometricInvariantTheory1994}, it suffices to prove the
  relation on each fiber. Hence we may assume that $\isog\from A \to A'$ is an
  $\ell$-isogeny over~$\C$. There exists~$\tau\in\Half_g$ such that~$A$ is
  isomorphic to $\C^g/(\Z^g + \tau \Z^g)$ and~$A'$ is isomorphic
  to~$\C^g/(\Z^g + \tau/\ell \Z^g)$, with $\isog$ induced by the identity
  on~$\C^g$. In this case, the deformation map at $\isog$ is given by
  $\tau \to \tau/\ell$, so the result follows from the description of the
  Kodaira--Spencer map over~$\C$ in \cref{prop:analytic_kodaira}.
\end{proof}

\subsection{Modular forms and covariants}
\label{subsec:moduli-covariants}

In §\ref{subsec:kodaira-spencer}, we showed that the differentials of modular
invariants are algebraic Siegel modular functions of weight~$\Sym^2$. In the
case of the Igusa invariants when~$g=2$ over~$\C$, \Cref{thm:vector-identification}
identifies these modular functions with explicit covariants of genus~$2$ curve
equations. We now prove an algebraic analogue of this statement.  As a
consequence, all the computations of \cref{sec:cov} remain valid over every field
of characteristic not~$2$ or~$\ell$.

Note that covariants make sense over every ring~$R$, replacing~$\C$ by~$R$ in
\cref{def:cov}.  In order to relate them with algebraic Siegel modular forms,
we consider the Torelli morphism
\begin{displaymath}
  \tau_g\from \MM_g \to \AAg
\end{displaymath}
where~$\MM_g$ denotes the moduli stack of smooth curves of genus~$g$. Let
$\CC_g\to\MM_g$ denote the universal curve. Then the
pullback~$\smash{\tau_g^\ast \Hodge}$ of the Hodge bundle by~$\tau_g$ is
$\pi_{\ast} \Omega^1{\CC_g/\MM_g}$, and both vector bundles carry compatible
actions of $\GL_g$.

Now assume that $g=2$. Over $\Z[1/2]$, the moduli stack $\MM_2$ is identified
with the moduli stack of nondegenerate binary forms of degree~$6$.  Let
$V=\Z x \oplus \Z y$, let $X=\det^{-2} V \otimes \Sym^6 V$, and
let~$U\subset X$ be the open locus of binary forms with nonzero discriminant.
Then~$U\to \MM_2$ is naturally identified with the Hodge frame bundle
on~$\MM_2$, by sending the binary form $\poly$ to the curve $y^2=\poly(x,1)$
with the basis of differential forms $(x\,dx/y, dx/y)$ \cite[\S
4]{cleryCovariantsBinarySextics2017}. In other words,~$U$ is the moduli space
of genus~$2$ hyperelliptic curves $\pi\from C \to S$ endowed with a
rigidification $\OO_S^{\oplus 2} \iso \pi_\ast \Omega^1_{C/S}$. Therefore,
over~$\Z[1/2]$, every Siegel modular form of weight~$\rho$ pulls back to a
fractional covariant of weight~$\rho$.

Write $\smash{\Cov(f)}$ for the covariant attached to a Siegel modular
function~$f$, and denote by~$C$ the canonical covariant of
weight~$\det^{-2}\Sym^6$, i.e.~the binary sextic form itself. We now show that
\cref{prop:f86} remains true in the algebraic setting.

\begin{prop}
  \label{prop:identification}
  The equality $\Cov(\chi_{10})\,C = \Cov(\chi_{6,8})$ holds over~$\Z[1/2]$.
\end{prop}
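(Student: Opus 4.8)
The plan is to deduce the identity over~$\Z$ from the identity over~$\C$ of \cref{prop:f86}, using the standard principle that an equality of sections of a $\Z$-flat sheaf which holds after base change to~$\C$ already holds over~$\Z$. First I would check that $f_{8,6}$ and~$\chi_{10}$ are defined over~$\Z$: the $q$-expansions written in \cref{thm:siegel-structure} and \cref{ex:f86} have integral Fourier coefficients, so by the $q$-expansion principle \cite[Ch.~V]{faltings_DegenerationAbelianVarieties1990}, together with Koecher's principle since $g=2$, they descend to sections of $\det^8\Sym^6(\Hodge)$ and of~$\hodge^{10}$ over~$\AAtwo/\Z$. The universal binary sextic~$X$ is tautologically defined over~$\Z$. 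By the algebraic form of \cref{thm:siegel-cov} explained in~§\ref{subsec:mf-ZZ}, pulling back a Siegel modular form defined over~$\Z$ along the Torelli morphism produces a polynomial covariant defined over~$\Z$, in a way compatible with the construction over~$\C$ from~§\ref{subsec:cov}; hence $\Cov(f_{8,6})$, $\Cov(\chi_{10})$, and therefore $\Cov(\chi_{10})\,X$, are polynomial covariants over~$\Z$, of weights $\det^8\Sym^6$, $\det^{10}$, and $\det^8\Sym^6$ respectively.

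It then remains to descend. Write~$M$ for the $\Z$-module of polynomial covariants of weight~$\det^8\Sym^6$; being the module of global sections of a $\Z$-flat sheaf it is torsion-free, so the natural map $M\to M\otimes_\Z\C$ is injective, with target the space of polynomial covariants of that weight over~$\C$ by flat base change of~$H^0$ — and this is compatible with the two constructions of~$\Cov$, since the Torelli pullback commutes with the base change $\Z\to\C$ and the $\GL_2$-equivariant identifications of~§\ref{subsec:mf-ZZ} are the ones used over~$\C$ in~§\ref{subsec:cov}. By \cref{prop:f86} the images of $\Cov(f_{8,6})$ and $\Cov(\chi_{10})\,X$ in $M\otimes_\Z\C$ coincide, hence they coincide in~$M$, which is exactly the claim.

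The only point that demands real care, as opposed to the formal argument above, is the assertion that an integral Siegel modular form pulls back to an \emph{integral polynomial} covariant at the primes~$2$ and~$3$, where Igusa invariants degenerate (note that $\Cov(\chi_{10})$ is $2^{-12}I_{10}$, so the relevant integral structure is the one coming from the Hodge bundle, not the naive one on $\Z[a_0,\ldots,a_6]$). Away from~$2$ this is immediate from~§\ref{subsec:mf-ZZ}: there $\MM_2$ is the stack of nondegenerate binary sextics, its extension to curves with at most one node has complement of codimension~$\geq 2$, and normality forces the pulled-back section to be polynomial in the coefficients. In characteristic~$2$ one argues in the same way after replacing the naive binary-sextic model by Igusa's universal form \cite[\S 2]{igusa_ArithmeticVarietyModuli1960}. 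Once this is in place the proposition follows with no further computation, all of its arithmetic content being already contained in the complex identity of \cref{prop:f86}.
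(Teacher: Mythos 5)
Your proof is correct and follows essentially the same route as the paper: show that both sides are polynomial covariants defined integrally (via the $q$-expansion principle for the modular forms and Igusa's universal form to handle characteristic~$2$), then descend the complex identity of \cref{prop:f86} using torsion-freeness of the Hodge bundle, i.e.\ injectivity of $M\to M\otimes_\Z\C$. One small caution: the paper deduces from the $q$-expansion principle only that $f_{8,6}$ is defined over $\Z[1/2,1/3,1/5,1/43]$ — the normalizing factor $1/111456000$ makes full integrality of all Fourier coefficients non-obvious from the few displayed terms — but this does not affect the argument, since the descent only requires that $\Cov(\chi_{10})\,X$ be defined over $\Z$ (checked on Igusa's universal curve) and that the ambient module be torsion-free.
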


\begin{proof}
  The covariants~$\Cov(\chi_{10})$ and~$C$ have integer coefficients, so they
  are defined over~$\Z[1/2]$. Since the Hodge bundle is without torsion, it is
  enough to check equality over~$\C$, which is the content of \cref{prop:f86}.
\end{proof}

As a consequence of \cref{prop:identification}, the identification of
the derivatives of the Igusa invariants as explicit covariants
(\cref{thm:vector-identification}) still holds over~$\Z[1/2]$.

\begin{rem}
  In fact, one can show as in~\cref{thm:cov-mf}, by considering suitable
  compactifications, that a Siegel modular form pulls back to a polynomial
  covariant over every ring~$R$ in which~$2$ is invertible. Using Igusa's
  universal form \cite[\S 2]{igusaArithmeticVarietyModuli1960}, one can also
  use binary forms of degree~$6$ to describe the moduli stack of genus~$2$
  curves even in characteristic~$2$.  This suggests another, entirely algebraic
  proof of \cref{prop:identification}. By dimension considerations, we have
  $\Cov(\chi_{10})\, C= \lambda \Cov(\chi_{6,8})$ for
  some~$\lambda\in\Q^\times$. The covariant $\Cov(\chi_{10})\, C$ is defined
  over~$\Z$ and primitive; therefore, if we can show that the Fourier
  coefficients of~$\chi_{6,8}$ are globally coprime integers, we will
  have~$\lambda=\pm 1$. An algebraic way to obtain~$\lambda=1$ could be to
  study degenerations from hyperelliptic curves to elliptic curves using
  \cite[Thm.~1.II]{liuCourbesStablesGenre1993}.
\end{rem}

\begin{rem}
  Let~$k$ be a field of characteristic different from~$2$ and~$3$, and let~$A$ be a
  p.p.~abelian surface over~$k$ such that $\Aut(A) = \{\pm 1\}$
  and~$j_3(A)\neq 0$. Let~$E$ be a genus~$2$ curve equation for~$A$. Then as a
  consequence of \cref{thm:vector-identification} over~$\Z[1/2]$, we obtain an
  explicit Kodaira--Spencer isomorphism at~$A$: it is equivalent to give
  \begin{enumerate}
  \item A deformation~$\widetilde{E}$ of~$E$ over $k[\epsilon]/(\epsilon^2)$,
  \item The Igusa invariants of~$\Jac(\Crv_{\widetilde{E}})$
    in~$k[\epsilon]/(\epsilon^2)$,
  \item A
    vector~$\alpha w_1^2 + \beta w_1 w_2 + \gamma w_2^2\in \Sym^2
    \Omega^1(\Crv_E)$, where $(w_1,w_2) = \omega_E$ is the canonical basis of
    differential forms on~$\Crv_E$.
  \end{enumerate}
  Switching between representations can be done in~$O(1)$ operations in~$k$.
\end{rem}

\subsection{Hilbert--Blumenthal stacks}
\label{subsec:moduli-hilbert}

There exists a similar algebraic
interpretation of the results of \cref{sec:cov} for isogenies of Hilbert type
in every dimension. This reformulation is based on \emph{Hilbert--Blumenthal
  stacks}, which classify abelian schemes with a real multiplication
structure~\cite{rapoportCompactificationsEspaceModules1978,
  chaiArithmeticMinimalCompactification1990}. We will simply outline the
main results, as the proof methods are similar to the Siegel case.

Let~$K$ be a real number field of dimension~$g$, and let~$\Z_K$ be its maximal
order.  We say that an abelian scheme $A \to S$ has \emph{real multiplication
  by~$\Z_K$} if it is endowed with a morphism $\iota\from \Z_K \to \End(A)$
such that $\Lie(A)$ is locally free of rank~$1$ as
a~$\Z_K \otimes \OO_S$-module.  The stack~$\HHg$ of p.p.~abelian schemes with
real multiplication by~$\Z_K$ is algebraic and smooth of relative dimension~$g$
over $\Spec \Z$ \cite[Thm.~1.14]{rapoportCompactificationsEspaceModules1978}.
Moreover,~$\HHg$ is connected and its generic fiber is geometrically connected
\cite[Thm.~1.28]{rapoportCompactificationsEspaceModules1978}.  Forgetting the
real multiplication yields the \emph{Hilbert embedding} $\HHg \to \AAg$, which
is an $\Aut(K)$-gerbe over its image, the \emph{Humbert stack}.  The map
$\HHg \to \AAg$ is finite \cite[EGA
IV.15.5.9]{grothendieckElementsGeometrieAlgebrique1964},
\cite[Lem~1.19]{deligneSchemasModulesCourbes1973}, and we described its
analytification in \cref{sec:mf}.

If~$\beta$ is a totally positive prime of~$\Z_K$, we can also construct the
stack~$\HHgbeta$ of abelian schemes with real multiplication endowed with the
kernel of a $\beta$-isogeny over~$\Z[1/N_{K/\Q}(\beta)]$. We are interested in
the map
\begin{align*}
  \PPHI_{\beta} = (\PPHI_{\beta,1},\PPHI_{\beta,2})\from \HHgbeta &\to \HHg \times \HHg\\
  A &\mapsto (A, A/K)  .
\end{align*}
As above, we use bold characters to denote the associated coarse maps and
spaces. We then have the following analogue of \cref{prop:moduli-generic}.

\begin{prop}
  \label{prop:hilbert-criteria}
  Let~$k$ be a field of characteristic not dividing $N_{K/\Q}(\beta)$.
  Let~$x$ be a $k$-point of $\HHgbeta$, and assume that
  both~$\PPHI_{\beta,1}(x)$ and~$\PPHI_{\beta,2}(x)$ have generic
  automorphisms. Then~$x$ maps to a smooth point of~$\Hgbeta$,
  both~$\PPHI_{\beta,1}(x)$ and~$\PPHI_{\beta,2}(x)$ map to smooth points
  of~$\Hg$, and we have a commutative diagram
  \begin{displaymath}
    \begin{tikzcd}
      \tangent{\PPHI_{\beta,1}(x)}{\HHg} \ar[d] &
      \tangent{x}{\HHgbeta}
      \ar[r, "d\PPHI_{\beta,2}"] \ar[l, swap, "d\PPHI_{\beta,1}"] \ar[d]&
      \tangent{\PPHI_{\beta,2}(x)}{\HHg} \ar[d] \\
      \tangent{\PHI_{\beta,1}(x)}{\Hg} &
      \tangent{x}{\Hgbeta}
      \ar[r, "d\PHI_{\beta,2}"] \ar[l, swap, "d\PHI_{\beta,1}"]&
      \tangent{\PHI_{\beta,2}(x)}{\Hg}
    \end{tikzcd}
  \end{displaymath}
  where the vertical arrows are isomorphisms.
\end{prop}

We deduce the following sufficient conditions to ensure the genericity of an
isogeny as in~§\ref{subsec:explicit-hilbert}. Let~$\PHIIb\subset\Hg\times\Hg$
be the image of~$\PHI_\beta$, and let $\PHIIH \subset \Ag\times\Ag$ denote the
image of~$\PHIIb$ under the Hilbert embedding.

\begin{cor}
  \label{cor:generic-hilbert}
  Let~$x$ be a $k$-point of $\HHgbeta$ such that both
  $x_1 = \PPHI_{\beta,1}(x)$ and $x_2 = \PPHI_{\beta,2}(x)$ only have generic
  automorphisms.  Assume further that~$(x_1, x_2)$ does not lie in the image of
  $\PPHI_{\betabar}$, in other words the corresponding abelian varieties are
  $\beta$-isogenous but not $\smash{\betabar}$-isogenous, and that $(x_1,x_2)$
  maps to a normal point of $\PHIIb$. Let~$\mathbf{y}$ the image of
  $\mathbf{x}$ by the forgetful morphism $\Hg \times \Hg \to \Ag \times \Ag$,
  and assume finally that~$\mathbf{y}$ lies
  in~$\mathbf{U}\times\mathbf{U}$. Then the $\beta$-isogeny corresponding
  to~$x$ is generic in the sense of~§\ref{subsec:explicit-hilbert}.
\end{cor}

To obtain an algebraic interpretation of \cref{prop:norm-matrix-hilbert}, we
invoke the Hilbert analogue of the Kodaira--Spencer isomorphism \cite[Prop.~1.6
and Prop.~1.9]{rapoportCompactificationsEspaceModules1978}. If $A \to
S$ is an abelian scheme corresponding to a point~$x$ of
$\HHg$, this isomorphism is
\begin{displaymath}
  \tangent{x}{\HHg} \iso \Hom_{\Z_K \otimes \OO_S}(\Lie(A)^\dualv,
  \Lie(A^\dualv)).
\end{displaymath}
Thus, on Hilbert--Blumenthal stacks, the deformation map is represented by an
element of $\Z_K \otimes \OO_S$ rather than a matrix in
$\OO_S$.  By \cite[\S~1.5]{rapoportCompactificationsEspaceModules1978}, the
Kodaira--Spencer isomorphisms
at~$A$ in the Hilbert and Siegel case fit in a commutative diagram with the
forgetful maps:
\begin{displaymath}
  \begin{tikzcd}
    \tangent{x}{\HHg} \ar[r] \ar[d]  & \tangent{x}{\AAg} \ar[d] \\
    \Hom_{\Z_K\otimes \OO_S}(\Lie(A)^\dualv, \Lie(A^\dualv)) \ar[r] &
    \Hom_{\Sym}(\Lie_S(A)^\dualv, \Lie_S(A^\dualv)).
  \end{tikzcd}
\end{displaymath}

In view of \cref{prop:analytic_kodaira} and the analytic description of the
forgetful map in~§\ref{subsec:hilbert-siegel} (easily generalized to every
dimension~$g$), the Kodaira--Spencer isomorphism in the Hilbert case takes the
following form over~$\C$.

\begin{prop}
  \label{prop:analytic_kodaira_hilbert}
  The pullback of the Kodaira--Spencer isomorphism
  under the analytic cover $\Half_1^g \to \HHg^{\text{an}}$ satisfies for every $1\leq j\leq g$:
  \begin{displaymath}
    \kappa \Bigl(\frac{1}{\pi i}  \frac{\partial}{\partial t_{j}}\Bigr)
    = \frac{1}{(2\pi i)^2}
    \frac{\partial}{\partial z_j} \otimes \frac{\partial}{\partial z_j}.
  \end{displaymath}  
\end{prop}

This result gives an algebraic interpretation for the presence of the
matrix~$T$ in \cref{prop:norm-matrix-hilbert}: in genus~$2$, the part of
$\tangent{x}{\AAtwo}$ coming from the Hilbert space is the span of
$dz_1\otimes dz_1$ and $dz_2\otimes dz_2$. We deduce from
\cref{prop:analytic_kodaira_hilbert} a relation between the tangent and
deformation matrices in the Hilbert case.

\begin{prop}
  \label{prop:defo_hilbert}
  Let $\isog\from A \to A'$ be a $\beta$-isogeny between abelian schemes with
  real multiplication over a base~$S \to \Z[1/N_{K/\Q}(\beta)]$. Denote
  by~$d\isog$ and $\defo(\isog)$ its associated tangent and deformation maps,
  seen as elements of $\Z_K\otimes \OO_S$-modules. Then under the
  Kodaira--Spencer isomorphism, we have $(\disog)^2 = \beta \defo(\isog)$.
\end{prop}

The last remaining step to prove that the computations
of~§\ref{subsec:explicit-hilbert} remain valid over every field is to give an
algebraic interpretation of the notion of (potentially) Hilbert-normalized
bases and the method to construct them in \cref{algo:hilb-curve-2}.

Let~$k$ be a field. Provided that $\chr k\nmid \Delta$, and up to taking an
étale extension of~$k$, we may assume that~$k$ splits~$\Z_K$, and fix a
trivialization $ \Z_K \otimes k \iso k^g$. Let~$A$ be an abelian variety
representing a $k$-point of~$\HHg$. Then~$\Lie(A)$ is a free
$\Z_K \otimes k$-module of rank~$1$, and a Hilbert-normalized basis
of~$\tangent{0}{A}$ is simply a basis of~$\Lie(A)$ as a $k$-vector space on
which~$\Z_K$ acts diagonally.  Let $(v_1, \dots, v_g)$ be a Hilbert-normalized
basis of~$\Lie(A)$, let $(w_1, \dots, w_g)$ be another $k$-basis and let~$M$ be
the base-change matrix. Then $w_1 \otimes w_1, \dots, w_g \otimes w_g$ are
tangent to the Humbert variety if and only if they are in the image of the map
\begin{displaymath}
  \Hom_{\Z_K \otimes k}(\Lie(A)^\dualv, \Lie(A^\dualv)) \to
  \Hom_{\Sym}(\Lie(A)^\dualv, \Lie(A^\dualv)).
\end{displaymath}
Therefore, the vectors $w_1 \otimes w_1, \dots, w_g \otimes w_g$ are tangent to
the Humbert variety if and only if~$M$ is diagonal up to a
permutation. When~$g=2$, this ensures that the basis $(w_1,\ldots,w_g)$ is
potentially Hilbert-normalized.

\section{Computing the isogeny from its tangent map}
\label{sec:alg}

Assume that we are given the tangent map~$d\isog$ of an isogeny
$\isog\from A \to A'$ between Jacobians of genus~$2$ curves defined over a
field~$k$, computed for instance from derivatives of modular equations as in
\cref{sec:cov}. We now describe how to compute~$\isog$ as a rational map by
solving a differential system with Newton iterations.

This approach is not new: \cite{elkiesEllipticModularCurves1998} introduces a
differential equation to compute isogenies in genus~$1$,
and~\cite{bostanFastAlgorithmsComputing2008} solves it with Newton
iterations. These ideas were extended to genus~$2$
in~\cite[§6.2]{couveignesComputingFunctionsJacobians2015}
and~\cite[§5.2]{costaRigorousComputationEndomorphism2019}. (Note that~$d\isog$
is obtained there in totally different ways, respectively using the kernel
of~$\isog$ as input and via a numerical approach whose complexity is hard to
control.) We will indicate the relevant differences between these references
and the differential system we set~up. Mainly, Newton iterations allow us to
reach a quasi-linear complexity in~$\ell$ instead of (at best) quasi-quadratic
using an iterative method.

\subsection{General strategy}
\label{subsec:introalg}

In general, the task of computing~$\isog$ may be specified as follows: given
models of~$A$ and~$A'$, that is given very ample line bundles~$\pol_A$
and~$\pol_{A'}$ on~$A$ and~$A'$ and a choice of global sections~$(a_i)$
(resp.~$(a'_j)$) which give a projective embedding of~$A$ (resp.~$A'$), express
the functions~$\isog^\ast a'_j$ on~$A$ as rational fractions in terms of the
coordinates~$(a_i)$.

One method to determine~$\isog$ from~$d\isog$ is to work with formal groups.
Let $x_1, \ldots, x_g$ be local uniformizers at $0_A$. Knowing $d\isog$ allows
us to write a differential system satisfied by the functions~$\isog^\ast a'_j$,
and we can attempt to solve it with a multivariate Newton algorithm. Upon
success, we recover the functions~$\isog^\ast a'_j$ as power series
in~$k[[x_1, \ldots, x_g]]$ up to some precision.  The next step is to use a
multivariate rational reconstruction algorithm to obtain~$\isog$ as a rational
map, assuming that the power series precision is large enough compared to the
degrees of the functions~$\isog^\ast a'_j$ in the variables~$(a_i)$.  For the
whole method to work,~$\isog$ must be completely determined by its tangent
map. This will be the case when $\chr k$ is large with respect to the degree
of~$\isog$. In practice, Newton iterations fail to reach sufficiently high
power series precision if $\chr k$ is too small, hence the bound~$8\ell+1$
in~\cref{thm:main}.

In genus~$2$ and away from characteristic~$2$, nice simplifications occur.
Let~$E$ and~$F$ be genus~$2$ curve equations, let $A = \Jac(\Crv_E)$ and
$A' = \Jac(\Crv_{F})$, and assume that we are given the matrix of $d\isog$ in
the bases of~$T_0(A)$ and~$T_0(A')$ that are dual to~$\omega_E$
and~$\omega_{F}$ respectively (see §\ref{subsec:hyperelliptic}). Then~$\isog$
is determined by the composition
\begin{displaymath}
  \begin{tikzcd}
    \Crv_E \ar[rr, hook, "{Q\mapsto [Q-P]}" ] & & \Jac(\Crv_E) \ar[r, "{\isog}" ]
    & \Jac(\Crv_{F}) \ar[r, dashed, "\sim"] & \Crv_{F}^{2,\sym} \ar[r, dashed, "m"]
    & \Avar^4
  \end{tikzcd}
\end{displaymath}
where~$P$ is any point on~$\Crv_E$, the symbol $\Crv_{F}^{2,\sym}$ denotes the
symmetric square of the curve~$\Crv_{F}$, and $m$ is the rational map given by
\begin{displaymath}
  \{(x_1,y_1),(x_2,y_2)\} \mapsto \Bigl(x_1+x_2,\ x_1x_2,\ y_1y_2,\ \frac{y_2-y_1}{x_2-x_1}\Bigr).
\end{displaymath}
This composite map is a quadruple rational fractions $s,p,q,r\in k(u,v)$ that
we call the \emph{rational representation of~$\isog$ at the base point~$P$}.
We choose a uniformizer~$z$ of~$\Crv_E$ around~$P$ and perform the Newton
iterations and rational reconstruction over the univariate power series
ring~$k[[z]]$.

We explain how to solve the resulting differential system
in~§\ref{subsec:diffsyst}. One difficulty is that the differential system we
obtain is singular, so we need to use the geometry of the curves to find the
first few terms in the series before switching to Newton iterations.
In~§\ref{subsec:rational}, we estimate the degrees of the rational fractions
that we want to compute and present the rational reconstruction step.

\subsection{Solving the differential system}
\label{subsec:diffsyst}

We keep the notation used in~§\ref{subsec:introalg}, and assume that
the characteristic of~$k$ is not~$2$. Write the curve equations
$\Crv_E,\,\Crv_{F}$ and the tangent matrix as
\begin{displaymath}
  \Crv_E\defby v^2 = E(u),\quad \Crv_E\defby y^2 = F(x),
  \quad d\isog = \mat{m_{1,1}}{m_{1,2}}{m_{2,1}}{m_{2,2}}.
\end{displaymath}
We assume that~$\isog$ is separable, so~$d\isog$ is invertible.  Let
$P\in \Crv_E(k)$ be a base point on~$\Crv_E$ (enlarging~$k$ if necessary). We
denote by $\isog_P$ the associated map
$\Crv_E\to\Crv_{F}^{2,\sym}$. Since~$\isog_P(P)$ is zero in~$\Jac(\Crv_{F})$,
we have
\begin{displaymath}
  \isog_P(P) = \bigl\{Q, i(Q)\bigr\}
\end{displaymath}
for some~$Q\in\Crv_{F}$, where~$i$ denotes the hyperelliptic
involution. Below, we will choose~$P$ such a way that~$Q$ is not a Weierstrass
point on~$\Crv_{F}$. If~$z$ is a local uniformizer of~$\Crv_E$ at~$P$, and~$R$
is a finite extension of~$k[[z]]$, we define a \emph{local lift of~$\isog_P$
  with coefficients in~$R$} to be a tuple
$\widetilde{\isog}_P = (x_1, x_2, y_1, y_2) \in R^4$ such that we have a
commutative diagram
\begin{displaymath}
  \begin{tikzcd}
    \Spec R \ar[rr, "{(x_1,y_1), (x_2,y_2)}"]
    \ar[d] && \Crv_{F}\times \Crv_{F} \ar[d] \\
    \Spec k[[z]] \ar[r] & \Crv_E \ar[r, "\isog_P"] & \Crv_{F}^{\,2,\sym}.
  \end{tikzcd}
\end{displaymath}

Assume that~$Q$ is not a Weierstrass point on~$\Crv_{F}$. Since the unordered
pair $\{Q, i(Q)\}$ is defined over~$k$, $Q$ is defined over a quadratic
extension~$k'$ of~$k$. The map $\Crv_{F}\times \Crv_{F}\to \Crv_{F}^{\,2,\sym}$
is étale at $(Q, i(Q))$, and thus induces an isomorphism of completed local
rings. Therefore, a local lift of~$\isog_P$ exists over~$k'[[z]]$.

The basis~$\omega_{F}$ of~$\Omega^1(\Jac(\Crv_{F}))$ corresponds
to the pair of differential forms
\begin{displaymath}
  \Bigl( \frac{x_1\, dx_1}{y_1} + \frac{x_2\, dx_2}{y_2},
  \frac{dx_1}{y_1} + \frac{dx_2}{y_2}  \Bigr)
\end{displaymath}
on $\Crv_{F}^{\,2,\sym}$. Thus, every local lift $(x_1,x_2,y_1,y_2)$ satisfies
the differential system
\begin{equation*}
  \label{eq:diffsyst}
  \tag{$S$}
  \begin{cases}
    \begin{matrix}
      \dfrac{x_1}{y_1} \dfrac{dx_1}{dz} + \dfrac{x_2}{y_2} \dfrac{dx_2}{dz} & = & (m_{1,1}
      u + m_{1,2}) \dfrac{1}{v}\dfrac{du}{dz} \smallskip \\ 
      \dfrac{1}{y_1} \dfrac{dx_1}{dz} + \dfrac{1}{y_2} \dfrac{dx_2}{dz} & = & (m_{2,1} u +
      m_{2,2}) \dfrac{1}{v} \dfrac{du}{dz} \smallskip \\
      y_1^2 = F(x_1) & & \\
      y_2^2 = F(x_2), & &
    \end{matrix}
  \end{cases}
\end{equation*}
where we consider the coordinates $u,v$ on~$\Crv$ as elements of~$k[[z]]$,
and~$d/dz$ denotes differentiation with respect to~$z$. In the remainder of
this section, we focus on solving this system up to a given precision, starting
with the determination of~$Q$.

\begin{rem}
  In~\cite{couveignesComputingFunctionsJacobians2015}, a differential system
  is used to compute a local lift of~$\isog_P$ at a base point other
  than~$P$. In our context, it is unclear how one would initialize such a
  system, as it would require knowing the image of~$\isog$ at a non-zero point
  of~$\Jac(\Crv_E)$. In
  contrast,~\cite[§5]{costaRigorousComputationEndomorphism2019} (specialized
  to the genus~$2$ case) also uses the zero point as a base point. However,
  they consider a birational map~$\Crv_F^{2,\,\sym} \to \Jac(\Crv_F)$ coming
  from a degree~$2$ divisor~$2P_0$ where~$P_0$ is not a Weierstrass point
  (whereas we take the canonical divisor, in other words~$P_0$ is a Weierstrass
  point). This removes the question of determining~$Q$, but in exchange one has
  to work with Puiseux series.
\end{rem}

\begin{prop}
  \label{prop:imagept}
  The point~$Q$ is uniquely determined by the following property: if~$\omega_P$
  (resp.~$\omega'_Q$) is a nonzero differential form on~$\Crv_E$ (resp.~$\Crv_{F}$)
  vanishing at~$P$ (resp.~$Q$), then there exists~$\lambda\in k^\times$ such that
  \begin{displaymath}
    \isog^*\omega'_Q = \lambda\, \omega_P.
  \end{displaymath}
\end{prop}

\begin{proof}
  First, assume that~$Q$ is not a Weierstrass point, so that a local
  lift~$\widetilde{\isog}_P$ exists over~$k'[[z]]$, where~$k'$ is a quadratic
  extension of~$k$.  The tangent space of $\Crv_{F}\times \Crv_{F}$ at
  $(Q, i(Q))$ decomposes as
  \begin{displaymath}
    \tangent{(Q,i(Q))}{\Crv_{F}\times \Crv_{F}}
    = \tangent{Q}{\Crv_{F}} \oplus \tangent{i(Q)}{\Crv_{F}}
    \simeq \tangent{Q}{\Crv_{F}}^2,
  \end{displaymath}
  where the last map is given by the hyperelliptic involution on the second
  term. Now consider the tangent vector $d\widetilde{\isog}_P/dz$ at $z=0$, and
  write it as $(v+w, w)$ for some $v,w\in \tangent{Q}{\Crv_{F}}$. Then~$v\neq 0$: indeed the
  whole direction $(w,w)$ is contracted to zero in the Jacobian, so if~$v$ were
  zero, every differential form on the Jacobian would be pulled back to zero
  via~$\isog_P$, contradicting the separability of $\isog$. Let~$\omega'$ be
  the unique nonzero differential form pulled back to~$\omega_P$
  by~$\isog$. Then~$\omega'$ must vanish on $(v, 0)$, in other words~$\omega'$
  must vanish at~$Q$, as claimed.

  If~$Q$ is a Weierstrass point, we can still find a local lift $(x_1,y_1,x_2,y_2)$
  of~$\isog_P$ with coefficients in~$k'[[\sqrt{z}]]$, where~$k'/k$ is a
  quadratic extension \stackcite{09E8}. After a change of variables, we may
  assume that~$P$ and~$Q$ are not at infinity. Write $P = (u_0, v_0)$ and
  $Q = (x_0, 0)$. The equality in the proposition can be rewritten as
  \begin{equation}
    \label{eq:xQ}
    x_0 = \dfrac{m_{1,1} u_0 + m_{1,2}}{m_{2,1}u_0 + m_{2,2}}.
  \end{equation}
  To show this, we use the system~\eqref{eq:diffsyst}. Write
  \begin{displaymath}
    y_1 = v_1\sqrt{z} + t_1 z + O(z^{3/2}) , \quad y_2 =
    v_2\sqrt{z} + t_2 z + O(z^{3/2}).
  \end{displaymath}
  Then the relation $y^2 = F(x)$ in~\eqref{eq:diffsyst} forces $x_1, x_2$ to have
  no term in $\sqrt{z}$, so that
  \begin{displaymath}
    x_1 = x_0 + w_1 z + O(z^{3/2}),\quad x_2 = x_0 + w_2z + O(z^{3/2}).
  \end{displaymath}
  Using the relation $dx/y = 2 dy/F'(x)$ (where~$F'$ is the derivative of~$F$),
  we have
  \begin{displaymath}
    \begin{cases}
      \dfrac{2 x_1}{F'(x_1)}\dfrac{dy_1}{dz} + \dfrac{2
        x_2}{F'(x_2)} \dfrac{dy_2}{dz} = (m_{1,1} u +
      m_{1,2})\dfrac{1}{v}\dfrac{du}{dz}, \smallskip\\
      \dfrac{2}{F'(x_1)} \dfrac{dy_1}{dz} +
      \dfrac{2}{F'(x_2)} \dfrac{dy_2}{dz} = (m_{2,1} u +
      m_{2,2})\dfrac{1}{v}\dfrac{du}{dz}.
    \end{cases}
  \end{displaymath}
  Inspection of the~$(\sqrt{z})^{-1}$ term gives $v_1 = -v_2$. Write
  $e = F'(x_0)$. Then the constant terms of the series on the left hand
  side are respectively
  \begin{displaymath}
    2 x_0 \Bigl(\frac{t_1}{e} + \frac{t_2}{e}\Bigr)\quad\text{and} \quad
    2 \Bigl(\frac{t_1}{e} + \frac{t_2}{e} \Bigr).
  \end{displaymath}
  The differential forms on the right hand side do not vanish simultaneously
  at~$P$, so $m_{2,1}u_0 + m_{2,2}$ is nonzero, and quotienting the
  two lines gives the result.
\end{proof}

Using \Cref{prop:imagept}, specifically~\eqref{eq:xQ}, we choose a base
point~$P$ such that~$Q$ is not Weierstrass. Then a local lift
$\widetilde{\isog}_P = (x_1,x_2,y_1,y_2)$ of~$\isog_P$ exists over~$k'[[z]]$,
where~$k'$ is quadratic over~$k$, and knowing~$Q= (x_0,y_0)$
specifies its constant term.

The next step is to compute the power series
$x_1,x_2,y_1,y_2$ up to $O(z^2)$. Write
\begin{displaymath}
  x_1 = x_0 + v_1 z + O(z^2), \quad x_2 = x_0 + v_2 z + O(z^2).
\end{displaymath}
Using the curve equations, we can compute $y_1$ and~$y_2$ up to~$O(z^2)$ in
terms of~$v_1$ and~$v_2$ respectively. Let $u_0$ (resp.~$d_0$) be the constant
term of the power series~$u$ (resp.~$1/v \cdot
du/dz$). Then~\eqref{eq:diffsyst} gives
\begin{equation}
  \label{eq:vi1}
  v_1 + v_2 = \dfrac{y_0}{x_0}(m_{1,1}u_0 + m_{2,1}) d_0 = y_0
  (m_{2,1}u_0 + m_{2,2}) d_0.
\end{equation}
Combining the two lines of~\eqref{eq:diffsyst}, we also obtain
\begin{displaymath}
  (x_1 - x_0) \dfrac{dx_1}{y_1} + (x_2 - x_0) \dfrac{dx_2}{y_2} = R,
\end{displaymath}
where $R= r_1z + O(z^2)$ has no constant term. At order 1, this yields
\begin{equation}
  \label{eq:vi2}
  v_1^2 + v_2^2 = y_0 r_1.
\end{equation}
Equalities~\eqref{eq:vi1} and~\eqref{eq:vi2} yield a quadratic equation satisfied by
$v_1, v_2$. This gives the values of $v_1$ and $v_2$ in a quadratic
extension $k'/k$.

We are now ready to begin the Newton iteration procedure. Assume that the
series $x_1, x_2, y_1, y_2$ are known up to~$O(z^n)$ for some $n\geq 2$. The
system~\eqref{eq:diffsyst} is satisfied up to~$O(z^{n-1})$ for the first two
lines, and~$O(z^n)$ for the last two lines. We attempt to double the precision,
and write
\begin{displaymath}
  x_1 = x_1^0(z) + \delta x_1(z) + O(z^{2n}),\ \text{etc.}
\end{displaymath}
where $x_1^0$ is the polynomial of degree at most $n-1$ that has been
computed. The series~$\delta x_i$ and~$\delta y_i$ start at the
term~$z^n$. Linearizing~\eqref{eq:diffsyst}, we obtain the following.

\begin{prop}
  \label{prop:diffsyst-linearize}
  The power series~$\delta x_1$, $\delta x_2$ satisfy a linear
  differential equation of the first order
  \begin{equation}
    \label{eq:mde}
    \tag{$E_n$}
    M(z)
    \vectwo{d(\delta x_1)/dz}{d(\delta x_2)/dz} + N(z) \vectwo{\delta
      x_1}{\delta x_2} = R(z) + O(z^{2n-1})
  \end{equation}
  where $M, N, R$ are $2\times 2$ matrices with coefficients in $k'[[z]]$ and
  have explicit expressions in terms of $x_1^0$, $x_2^0$, $y_1^0$, $y_2^0$,
  $u$, $v$, $E$ and $F$. In particular,
  \begin{displaymath}
    M(z) =
    \mat{x_1^0/y_1^0}{x_2^0/y_2^0}{1/y_1^0}{1/y_2^0}
  \end{displaymath}
  and, writing $e = F'(x_0)$, the constant term of $N$ is
  \begin{displaymath}
    \mat{\dfrac{v_1}{y_0} - \dfrac{x_0 v_1}{2 y_0^3}
      e}{\dfrac{v_2}{y_0} - \dfrac{x_0 v_2}{2 y_0^3}
      e}{-\dfrac{v_1}{2 y_0^3} e}{-\dfrac{v_2}{2 y_0^3} e}.
  \end{displaymath}
\end{prop}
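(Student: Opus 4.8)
The plan is to obtain $(E_n)$ by linearizing the first two equations of the differential system $(S)$ around the known truncation $x_1^0, x_2^0, y_1^0, y_2^0$. Since the base point $P$ was chosen so that $\isog_P$ is of generic type, the point $Q=(x_0,y_0)$ of \cref{prop:imagept} satisfies $y_0\neq 0$; hence each $y_i^0$ is a unit in $k'[[z]]$. I would first use the last two equations of $(S)$ to eliminate the $y_i$: they single out the branch $y_i=\sqrt{E_{\Crv'}(x_i)}$ in $k'[[z]]$ agreeing with the value computed in the initialization step, so one may regard $y_i$ as an explicit function of $x_i$, with $\delta y_i := y_i(x_i^0+\delta x_i)-y_i^0 = \tfrac{E_{\Crv'}'(x_i^0)}{2y_i^0}\,\delta x_i + O\bigl((\delta x_i)^2\bigr)$ and $\tfrac1{y_i}=\tfrac1{y_i^0}-\tfrac{\delta y_i}{(y_i^0)^2}+O\bigl((\delta y_i)^2\bigr)$.

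Next I would substitute $x_i = x_i^0+\delta x_i$ into the left-hand sides $\tfrac{x_i x_i'}{y_i}$ and $\tfrac{x_i'}{y_i}$ and expand to first order in the corrections. The decisive bookkeeping point is that $\delta x_i$ starts at order $z^n$, so $\delta x_i'$ starts at order $z^{n-1}$, and therefore every term quadratic in the corrections — for instance $\delta x_i'\,\delta y_i$ or $(\delta y_i)^2$ — lies in $O(z^{2n-1})$ and can be dropped. Thus the linearization is exact modulo $O(z^{2n-1})$. Collecting the zeroth-order parts $\tfrac{x_i^0(x_i^0)'}{y_i^0}$ and $\tfrac{(x_i^0)'}{y_i^0}$, the inductive hypothesis that $(S)$ holds to $O(z^{n-1})$ on its first two lines lets me move them to the right-hand side, producing the series $R(z)$ (of $z$-adic valuation $\ge n-1$); the terms linear in $\delta x_i'$ assemble into $M(z)\,\binom{\delta x_1'}{\delta x_2'}$ with $M(z)=\mat{x_1^0/y_1^0}{x_2^0/y_2^0}{1/y_1^0}{1/y_2^0}$, and the terms linear in $\delta x_i$ assemble into $N(z)\,\binom{\delta x_1}{\delta x_2}$, whose entries are $N_{1,i}=\tfrac{(x_i^0)'}{y_i^0}-\tfrac{x_i^0(x_i^0)'E_{\Crv'}'(x_i^0)}{2(y_i^0)^3}$ and $N_{2,i}=-\tfrac{(x_i^0)'E_{\Crv'}'(x_i^0)}{2(y_i^0)^3}$. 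Setting $z=0$, i.e. $x_i^0(0)=x_0$, $y_i^0(0)=y_0$, $(x_i^0)'(0)=v_i$, $E_{\Crv'}'(x_0)=e$, gives the claimed constant term of $N$; the explicit dependence of $M,N,R$ on $x_i^0,y_i^0,u,v,E_\Crv,E_{\Crv'}$ is manifest from the construction.

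I do not expect a genuine obstacle here: the statement is a careful first-order expansion, and the explicit formulas for $M$ and $N(0)$ are read off directly. The two points meriting care are (i) the legitimacy of solving $y_i^2=E_{\Crv'}(x_i)$ for $y_i$ as a power series and inverting $y_i^0$, which is exactly where genericity of $\isog_P$ (hence $y_0\neq 0$) enters, and (ii) the precision accounting showing that all quadratic corrections fall into $O(z^{2n-1})$, which relies on $\delta x_i$ and $\delta x_i'$ beginning at orders $z^n$ and $z^{n-1}$ — a consequence of the hypothesis that the current approximation satisfies $(S)$ to $O(z^{n-1})$ on the first two lines and $O(z^n)$ on the last two. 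The analysis of $\det M(z)$, and hence of the solvability of $(E_n)$ to doubled precision, is deferred to \cref{lem:det-valuation} and \cref{prop:newton-mde}.
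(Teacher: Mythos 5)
Your proposal is correct and matches the paper's approach exactly: the paper's entire proof is ``Linearize the system~\eqref{eq:diffsyst}; we omit the calculations,'' and you have carried out precisely that linearization, with the right order bookkeeping (quadratic corrections such as $\delta x_i'\,\delta x_i$ have valuation $\ge 2n-1$) and the correct first-order expansion of $1/y_i$ via the curve equation, recovering the stated $M$ and the constant term of $N$.
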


In order to solve~\eqref{eq:diffsyst} in quasi-linear time in the precision, it
is enough to solve equation~\eqref{eq:mde} in quasi-linear time in~$n$. One
difficulty here, that does not appear in similar
works~\cite{couveignesComputingFunctionsJacobians2015,
  costaRigorousComputationEndomorphism2019} and is related to our choice of
base point at $0_A$, is that the matrix~$M$ is not invertible in $k'[[z]]$. We
can nonetheless adapt the divide-and-conquer strategy
from~\cite[§13.2]{bostanAlgorithmesEfficacesCalcul2017}.

\begin{lem}
\label{lem:det-valuation}
  The determinant
    $\det M(z) = \dfrac{x_1^0 - x_2^0}{y_1^0 y_2^0}$
  has valuation one in~$z$.
\end{lem}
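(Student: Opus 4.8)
The plan is to reduce the statement to two facts: that $y_1^0 y_2^0$ is a unit in~$k'[[z]]$, and that $x_1^0 - x_2^0$ has valuation exactly one. For the first, I would note that the displayed formula $\det M(z) = \frac{x_1^0 - x_2^0}{y_1^0 y_2^0}$ is an immediate computation from the explicit shape of~$M(z)$ in \cref{prop:diffsyst-linearize}, and that — since the base point~$P$ was chosen via \cref{prop:imagept} so that~$\isog_P$ is of generic type — the point $Q = (x_0,y_0)$ is not a Weierstrass point of~$\Crv'$, hence $y_0 \neq 0$. From $y_i^2 = E_{\Crv'}(x_i)$ and $x_i(0) = x_0$ one gets $y_i^0(0)^2 = E_{\Crv'}(x_0) = y_0^2$, so $y_i^0(0) = \pm y_0 \neq 0$ and $y_1^0 y_2^0 \in k'[[z]]^\times$. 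It then suffices to show that $x_1^0 - x_2^0$ has valuation one.

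Recall from Step~2 that $x_1^0 = x_0 + v_1 z + O(z^2)$ and $x_2^0 = x_0 + v_2 z + O(z^2)$, where $v_1,v_2$ are the linear coefficients of the local lift $\widetilde{\isog}_P$ (which exists over~$k'[[z]]$ by \cref{lem:lift}); so $x_1^0 - x_2^0 = (v_1-v_2)z + O(z^2)$ and I only need $v_1 \neq v_2$. For this I would invoke \cref{lem:colinear}. The tangent vector $d\widetilde{\isog}_P/dz$ at $z = 0$ lies in $T_{(Q,i(Q))}\Crv'^{2} = T_Q\Crv' \oplus T_{i(Q)}\Crv'$; its first component has $x$-coordinate~$v_1$, and its second component has $x$-coordinate~$v_2$, hence so does its image in~$T_Q\Crv'$ under the hyperelliptic involution, since~$i$ fixes the $x$-coordinate. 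As~$Q$ is not a Weierstrass point, a tangent vector to~$\Crv'$ at~$Q$ is determined by its $x$-coordinate, so under the identification $T_{(Q,i(Q))}\Crv'^{2} \simeq (T_Q\Crv')^2$ of \cref{lem:colinear} this tangent vector corresponds to the pair~$(v_1,v_2)$. By \cref{lem:colinear} it is not of the form $(v,v)$, so $v_1 \neq v_2$, and $x_1^0 - x_2^0$ has valuation one, which concludes.

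I expect the only delicate point to be this last matching of the two components of the lift's tangent vector with the identification used in \cref{lem:colinear}: one must check that applying the hyperelliptic involution to the second factor introduces neither a sign nor a swap that would change the conclusion. Since $i$ acts by $(x,y)\mapsto(x,-y)$, it preserves $x$-coordinates of tangent vectors, so "$v_1\neq v_2$" is exactly what non-colinearity gives. A harmless bookkeeping point is that one may first need to relabel the two points so that $(x_1(0),y_1(0)) = Q$; this only changes $\det M(z)$ by a sign and so does not affect its valuation.
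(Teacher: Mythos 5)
Your proof is correct and follows essentially the same route as the paper's: the constant terms $\pm y_0\neq 0$ make the denominator a unit, and \cref{lem:colinear} forbids $v_1=v_2$, since the curve equation (equivalently, the fact that $x$ is a local coordinate at the non-Weierstrass point $Q$) would then force the two components of the tangent vector of the lift to coincide in $T_Q\,\Crv'$. Your explicit bookkeeping of the identification $T_{(Q,i(Q))}\Crv'^{\,2}\simeq (T_Q\,\Crv')^2$ is just a slightly more detailed phrasing of the paper's one-line argument.
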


\begin{proof}
  We know that~$y_1^0$ and~$y_2^0$ have constant term~$\pm y_0\neq 0$.  The
  polynomials~$x_1^0$ and~$x_2^0$ have the same constant term~$x_0$, but they
  do not coincide at order~$2$: if they did, then so would~$y_1$ and~$y_2$
  because of the curve equation, and~$\isog_P$ would pull back every
  differential form on~$\Crv_F$ to zero, a contradiction.
\end{proof}

By \Cref{lem:det-valuation}, we can find $I\in\M_2\bigl(k'[[z]]\bigr)$
such that $IM = \mat{z}{0}{0}{z}$.

\begin{lem}
  \label{lem:invertible}
  Let $\kappa\geq 1$, and assume that $\chr k > \kappa+1$. Let $A = IN$. Then
  the matrix $A+\kappa$ has an invertible constant term.
\end{lem}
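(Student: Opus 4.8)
Here is how I would prove the lemma.

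The quantity $A+\kappa$ denotes the matrix $A+\kappa\Id$ over $k'[[z]]$, and its constant term is $A(0)+\kappa\Id$, where $A(0)\in\M_2(k')$ is the constant term of $A=IN$. This matrix is invertible precisely when $\det\bigl(A(0)+\kappa\Id\bigr)\neq 0$, i.e.\ when $-\kappa$ is not an eigenvalue of $A(0)$. So the plan is to show that the characteristic polynomial of $A(0)$ is $t^2-t$, hence that its eigenvalues are $0$ and $1$; granting this, $A(0)+\kappa\Id$ has eigenvalues $\kappa$ and $\kappa+1$, and neither vanishes in $k$ because $1\leq\kappa$ and either $\chr k=0$ or $\chr k>\kappa+1$, so that $0<\kappa<\kappa+1<\chr k$.

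First I would handle $\det A(0)=0$. Since $IM=\left(\begin{smallmatrix}z&0\\0&z\end{smallmatrix}\right)$ we have $I=zM^{-1}=(z/\det M)\operatorname{adj}(M)$, which lies in $\M_2(k'[[z]])$ exactly because $\det M$ has $z$-adic valuation $1$ by \Cref{lem:det-valuation}; consequently $\det I=z^2/\det M$ has valuation $1$, so $\det I(0)=0$. Hence $I(0)$ has rank at most $1$, so does $A(0)=I(0)N(0)$, and therefore $\det A(0)=0$. For the trace I would read off the constant terms explicitly: by the proof of \Cref{lem:det-valuation}, $x_1^0$ and $x_2^0$ have common constant term $x_0$ with $x_1^0-x_2^0=(v_1-v_2)z+O(z^2)$ and $v_1\neq v_2$, while $y_1^0,y_2^0$ have constant terms $\pm y_0$, where $(x_0,y_0)=Q$ is the point from \Cref{prop:imagept}. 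Substituting these into $I=\bigl(z/(x_1^0-x_2^0)\bigr)\left(\begin{smallmatrix}y_1^0&-x_2^0 y_1^0\\-y_2^0&x_1^0 y_2^0\end{smallmatrix}\right)$ yields an $I(0)$ whose two rows are each proportional to $(1,-x_0)$; multiplying by the constant term of $N$ recorded in \Cref{prop:diffsyst-linearize} and using the relation between its two rows visible in that formula (the first row equals $x_0$ times the second plus $y_0^{-1}(v_1,v_2)$ up to the branch sign), the product collapses to $A(0)=(v_1-v_2)^{-1}\left(\begin{smallmatrix}v_1&-v_2\\v_1&-v_2\end{smallmatrix}\right)$, which has trace $1$. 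Thus the characteristic polynomial of $A(0)$ is $t^2-\Tr\bigl(A(0)\bigr)t+\det A(0)=t^2-t$, and the lemma follows.

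The only step needing genuine care is this final computation: the sign conventions of \Cref{prop:diffsyst-linearize}, equivalently the labelling of the two branches of $\isog_P(P)=\{Q,i(Q)\}$ (which of $y_1^0(0),y_2^0(0)$ is $+y_0$), must be tracked consistently through both $M$ and $N$ so that the cancellations leading to $\Tr A(0)=1$ are exact; everything else is formal, the essential inputs being \Cref{lem:det-valuation} (and through it \Cref{lem:colinear}, which supplies $v_1\neq v_2$) and the explicit shape of the constant term of $N$. It is worth noting that $\Tr A(0)=1$ together with $\det A(0)=0$ is exactly what is needed for the divide-and-conquer resolution of \eqref{eq:mde} to succeed at every level of the recursion, which is the use to which this lemma is put.
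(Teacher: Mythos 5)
Your proof is correct and follows the same route as the paper: the paper's own proof simply asserts, omitting the calculation, that the constant term of $\det(A+\kappa)$ is $\lambda^2\kappa(\kappa+1)$, and your explicit computation of $A(0)$ — characteristic polynomial $t^2-t$, hence $\det\bigl(A(0)+\kappa\bigr)=\kappa(\kappa+1)\neq 0$ — supplies exactly that omitted calculation (the normalization factor $\lambda^2$ is immaterial for invertibility). The two inputs you isolate, namely $\det A(0)=0$ from the valuation of $\det M$ and $\Tr A(0)=1$ from the cancellation between $I(0)$ and the constant term of $N$ (tracked with consistent branch signs $y_1^0(0)=-y_2^0(0)$), are precisely the content behind the paper's one-line assertion.
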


\begin{proof}
  By \Cref{lem:det-valuation}, the leading term of $\det(M)$ is
  $\lambda z$ for some nonzero $\lambda\in k'$.  Using
  \Cref{prop:diffsyst-linearize}, we see that the constant term of
  $\det(A+\kappa)$ is~$\lambda^2 \kappa(\kappa+1)$.
\end{proof}

\begin{prop}
  \label{prop:newton-mde}
  Let $1\leq \prc\leq 2n-1$, and assume that $\chr k = 0$ or $\chr k > \prc$. Then we
  can solve~\eqref{eq:mde} to compute $\delta x_1$ and $\delta x_2$ up
  to precision $O(z^\prc)$ using $\Otilde(\prc)$ operations in~$k'$.
\end{prop}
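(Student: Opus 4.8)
The plan is to turn \eqref{eq:mde} into an equivalent regular singular system whose associated coefficient recursion has an invertible leading matrix, and then to run on that recursion the generic divide-and-conquer algorithm for linear recurrences with polynomial coefficients of \cite[§13.2]{bostan_AlgorithmesEfficacesCalcul2017}. First I would multiply \eqref{eq:mde} on the left by the matrix $I\in\M_2\bigl(k[[z]]\bigr)$ with $IM = \mat{z}{0}{0}{z}$ constructed after \cref{lem:det-valuation}; since $\det M$ has valuation exactly one, $I$ is invertible over $k'((z))$ and no information is lost. Writing $Y = \vectwo{\delta x_1}{\delta x_2}$, $A = IN = \sum_{j\geq 0}A_j z^j$ and $S = IR = \sum_m S_m z^m$, this gives
\begin{displaymath}
  z\,Y' + A(z)\,Y = S(z) + O(z^{2n-1}),
\end{displaymath}
and one seeks the solution with $\delta x_1,\delta x_2$ of valuation at least~$n$. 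Extracting the coefficient of $z^m$ and separating the $j=0$ term yields, for $n\leq m\leq 2n-2$,
\begin{displaymath}
  (m\cdot\Id + A_0)\,c_m = S_m - \sum_{j=1}^{m-n} A_j\,c_{m-j},
  \qquad\text{where}\qquad Y = \sum_{m\geq n} c_m z^m .
\end{displaymath}

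By \cref{lem:invertible} applied with $\kappa = m$ — legitimate since $m\geq n\geq 2$ and, whenever $m\leq\prc-1$, the hypothesis forces $\chr k>\prc\geq m+1$ — the matrix $m\cdot\Id + A_0$ is invertible for every index $m$ in the range $n\leq m\leq\prc-1$ that is needed, the bound $\prc\leq 2n-1$ ensuring $m\leq 2n-2$ so that the recursion above applies. Running this recursion directly already computes $\delta x_1,\delta x_2$ modulo $z^\prc$ in $O(\prc^2)$ operations in $k'$. To reach $\Otilde(\prc)$, I would truncate $A$ and $S$ modulo $z^\prc$ and compute the vectors $c_n,\dots,c_{\prc-1}$ by divide and conquer: split the index range in two halves, recursively solve for the first-half coefficients, then use a single middle-product polynomial multiplication of size $O(\prc)$ (hence $\Otilde(\prc)$ operations in $k'$) to add the contribution $\sum_j A_j c_{m-j}$ of the first-half coefficients to the right-hand sides $S_m$ of the second-half equations, and finally recurse on the second half with the corrected right-hand sides. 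This is exactly the scheme of \cite[§13.2]{bostan_AlgorithmesEfficacesCalcul2017}, whose only requirement — invertibility of the indicial matrices $m\cdot\Id+A_0$ at all relevant integers $m$ — is precisely what \cref{lem:invertible} provides under the assumption $\chr k>\prc$. The recursion $T(\prc) = 2T(\prc/2) + \Otilde(\prc)$ gives $T(\prc) = \Otilde(\prc)$.

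The step I expect to be the main obstacle — and the reason this does not reduce verbatim to the analogous computations of \cite{couveignes_ComputingFunctionsJacobians2015,costa_RigorousComputationEndomorphism2019} — is handling the singular leading matrix $M$ of \eqref{eq:mde}: it is to remedy this that one passes to $zY'+AY=S$, a system with a regular singularity at $z=0$ whose indicial matrix is $A_0$, and it is here that the condition on $\chr k$ must enter, through \cref{lem:invertible}, to keep $A_0$ shifted by each integer $m$ in range invertible so that the divide-and-conquer recursion remains well defined at every node. I would also record that $I$, the matrices $A_j$ and the vectors $S_m$ need only be known modulo $z^\prc$ and can be precomputed, so they contribute only $\Otilde(\prc)$ to the overall cost, matching the claimed bound.
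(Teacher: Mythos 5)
Your proposal is correct and follows essentially the same route as the paper: multiply by $I$ to obtain the regular singular system $z\theta' + A\theta = B$, invoke \cref{lem:invertible} to ensure the shifted indicial matrices $A_0 + \kappa$ are invertible for all $\kappa$ up to $\prc-1$ (using $\chr k > \prc$), and apply the divide-and-conquer scheme of \cite[§13.2]{bostan_AlgorithmesEfficacesCalcul2017}. Your phrasing in terms of the coefficient recursion and middle products is just the coefficient-level view of the paper's recursive splitting $\theta = \theta_1 + z^{d_1}\theta_2$, so the two arguments coincide.
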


\begin{proof}
  Write $\theta = \vectwo{\delta x_1}{\delta x_2}$.
  Multiplying~\eqref{eq:mde} by~$I$, we obtain the equation
  \begin{displaymath}
    z \dfrac{d\theta}{dz} + (A + \kappa) \theta = B + O(z^{\prc}), \quad \text{where }
    \kappa=0.
  \end{displaymath}
  We show that~$\theta$ can be computed from this kind of equation up
  to~$O(z^\prc)$ using a divide-and-conquer strategy. If~$\prc>1$, write
  $\theta = \theta_1 + z^{\prc_1}\theta_2$ where $\prc_1 = \lfloor \prc/2\rfloor$. Then
  \begin{displaymath}
    z\dfrac{d\theta_1}{dz} + (A + \kappa)\theta_1 = B + O(z^{\prc_1})
  \end{displaymath}
  for some other series~$B$. By induction, we recover~$\theta_1$
  up to~$O(z^{\prc_1})$. Then, we have
  \begin{displaymath}
    z\dfrac{d\theta_2}{dz} + (A + \kappa + \prc_1)\theta_2 = C + O(z^{\prc-\prc_1})
  \end{displaymath}
  where~$C$ has an expression in terms of~$\theta_1$. This is enough to
  recover~$\theta_2$ up to~$O(z^{\prc-\prc_1})$, so we can recover~$\theta$ up
  to~$O(z^{\prc})$. We initialize the induction with the case $d=1$, where we
  have to solve for the constant term in
  \begin{displaymath}
    (A + \kappa)\theta = B.
  \end{displaymath}
  Since~$\theta$ starts at~$z^2$, the values of~$\kappa$ that occur are
  $2,\ldots, \prc-1$ when computing the solution of~\eqref{eq:diffsyst} up to
  precision~$O(z^\prc)$. By \Cref{lem:invertible}, the constant term of
  $A+\kappa$ is invertible. This concludes the induction. The complexity
  estimate follows from standard lemmas in computer
  algebra~\cite[Lem.~1.12]{bostanAlgorithmesEfficacesCalcul2017}.
\end{proof}

As a consequence of \Cref{prop:newton-mde}, we can indeed
solve~\eqref{eq:diffsyst} in quasi-linear time.

\begin{prop}
  \label{prop:newton}
  Let $\prc\geq 1$, and let~$k$ be a field such that $\chr k = 0$ or
  $\chr k > \prc$. Let~$E$ and~$F$ be genus~$2$ curve equations over~$k$ such
  that there exists an isogeny $\isog:\Jac(\Crv_E)\to \Jac(\Crv_F)$, and assume
  that we are given the matrix $d\isog$ in the bases of~$T_0(\Jac(\Crv_E))$
  and~$T_0(\Jac(\Crv_F))$ associated with this choice of equations. Let~$P\in
  \Crv_E(k)$ be a base point such
  that~$\varphi_P(P) = \{Q,i(Q)\}$ for some non-Weierstrass point~$Q$
  on~$\Crv_F$. Let~$k'$ be the field of definition of~$Q$, and let~$z$ be a
  uniformizer of~$\Crv_E$ at~$P$.  Then one can compute the local
  lift~$\widetilde{\isog}_P$ as power series in~$k'[[z]]$ up to
  precision~$O(z^\prc)$ using~ $\Otilde(\prc)$ operations in $k'$.
\end{prop}

\subsection{Rational reconstruction}
\label{subsec:rational}

Finally, we want to recover the rational representation $(s, p, q, r)$
of~$\isog$ at~$P$ from its power series expansion~$\widetilde{\isog}_P$ at a
finite precision. For this, we need upper bounds on the degrees of these
rational fractions.

The degrees of $s, p, q, r$ as morphisms from~$\Crv_E$ to~$\Pvar^1$ can be
computed as intersection numbers of divisors on~$\Jac(\Crv_F)$,
namely~$\isog_P(\Crv_E)$ and the polar divisors of $s$, $p$, $q$ and $r$. They
are already known in the case of an~$\ell$-isogeny.

\begin{prop}[{\cite[§6.1]{couveignesComputingFunctionsJacobians2015}}]
  \label{prop:degree-siegel}
  Let $\isog\from\Jac(\Crv_E)\to\Jac(\Crv_F)$ be an $\ell$-isogeny, and let
  $P\in\Crv_E(k)$. Let $(s,p,q,r)$ be the rational representation of $\isog$ at
  the base point~$P$. Then the degrees of $s$, $p$, $q$ and $r$
  are $4\ell$, $4\ell$, $12\ell$, and $8\ell$ respectively.
\end{prop}

Now assume that~$\Jac(\Crv_E)$ and~$\Jac(\Crv_F)$ have real
multiplication by $\Z_K$ given by embeddings $\iota_E,\iota_F$, and that
$\isog\from \bigl(\Jac(\Crv_E),\iota_E\bigr) \to \bigl(\Jac(\Crv_F),\iota_F\bigr)$
is a $\beta$-isogeny. Denote the theta divisors on~$\Jac(\Crv_E)$
and~$\Jac(\Crv_F)$ by~$\Theta_E$ and~$\Theta_F$ respectively, and denote by
$\eta_P\from\Crv_E\to\Jac(\Crv_E)$ the map
$Q\mapsto[Q-P]$. Then~$\eta_P(\Crv_E)$ is algebraically equivalent to~$\Theta_E$.

\begin{lem}
  \label{lem:spqr-poles}
  The polar divisors of $s, p, q, r$ as rational functions
  on~$\Jac(\Crv_F)$ are algebraically equivalent to $2\Theta_F$,
  $2\Theta_F$, $6\Theta_F$ and $4\Theta_F$ respectively.
\end{lem}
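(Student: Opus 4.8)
The plan is to compute the polar divisors directly on the symmetric square $\Crv'^{2,\sym}$, where $s,p,q,r$ are literally the components of the map~$m$, and then transport the answer to $\Jac(\Crv')$. The Abel--Jacobi construction gives a proper birational \emph{morphism} $\beta\from\Crv'^{2,\sym}\to\Jac(\Crv')$, $\{P_1,P_2\}\mapsto[P_1+P_2-D_\infty]$, where $D_\infty$ is the divisor at infinity of~$\Crv'$; it is an isomorphism away from the curve of pairs $\{P,i(P)\}$ (the $g^1_2$), which it contracts to the origin. Hence, for a rational function~$f$, the polar divisor $\operatorname{div}_\infty(f)$ on $\Jac(\Crv')$ is the image under~$\beta$ of the polar divisor on $\Crv'^{2,\sym}$, the only possible loss being a component supported on the contracted curve. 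I will carry out the computation assuming $\deg E_{\Crv'}=6$ (as is the case for all the hyperelliptic equations we construct), so that $\Crv'$ has two points $\infty^{+},\infty^{-}$ at infinity, interchanged by~$i$, with $v_{\infty^{\pm}}(x)=-1$ and $v_{\infty^{\pm}}(y)=-3$.

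First I would pin down the components. For $\varepsilon\in\{+,-\}$, let $\Theta'_{\varepsilon}\subset\Crv'^{2,\sym}$ be the curve of pairs one of whose members is~$\infty^{\varepsilon}$; then~$\beta$ restricts on~$\Theta'_{\varepsilon}$ to the Abel embedding $P\mapsto[P-\infty^{-\varepsilon}]$, so $\beta_{*}\Theta'_{\varepsilon}$ is a theta divisor and in particular $\beta_{*}\Theta'_{+}\equiv\beta_{*}\Theta'_{-}\equiv\Theta'$ in $\NS(\Jac(\Crv'))$. Arguing this way---exhibiting each polar component as an explicit Abel image, rather than via an intersection number---is what makes the proof go through in the real multiplication case, where $\NS(\Jac(\Crv'))$ has rank~$2$. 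Now $s=x_{1}+x_{2}$ and $p=x_{1}x_{2}$ have poles precisely where one member of the pair runs off to infinity, that is along $\Theta'_{+}\cup\Theta'_{-}$; parametrizing a germ transverse to~$\Theta'_{\varepsilon}$ by the uniformizer of~$\Crv'$ at~$\infty^{\varepsilon}$ and using $v_{\infty^{\varepsilon}}(x)=-1$ shows each of these poles is simple, so $\operatorname{div}_{\infty}(s)=\operatorname{div}_{\infty}(p)=\Theta'_{+}+\Theta'_{-}\equiv2\Theta'$. The same transverse germ, now with $v_{\infty^{\varepsilon}}(y)=-3$, shows that $q=y_{1}y_{2}$ has a triple pole along $\Theta'_{+}\cup\Theta'_{-}$, whence $\operatorname{div}_{\infty}(q)\equiv6\Theta'$, and that $r=(y_{2}-y_{1})/(x_{2}-x_{1})\sim y(Q)/x(Q)$ as $Q\to\infty^{\varepsilon}$, a pole of order $3-1=2$. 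It then remains to rule out other poles of~$r$: the only other locus where it could blow up is $\{x_{1}=x_{2}\}$, which inside $\Crv'^{2,\sym}$ is the union of the diagonal and the $g^1_2$; the $g^1_2$ is contracted by~$\beta$, and along the diagonal~$r$ extends to the tangent slope $E_{\Crv'}'(x)/(2y)$ and so is regular. Hence $\operatorname{div}_{\infty}(r)=2\Theta'_{+}+2\Theta'_{-}\equiv4\Theta'$ on $\Jac(\Crv')$.

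The only genuinely delicate point I foresee is the regularity of~$r$ along the diagonal: the claim is that~$r$ extends to an honest regular function there, not merely that it is defined up to indeterminacy. To see it one writes a generic point of the diagonal as $\{P_{0},P_{0}\}$ with $P_{0}$ affine and not a Weierstrass point, takes $x-x(P_{0})$ as a local parameter on~$\Crv'$, expands~$y$ as a power series in it, and observes that~$r$ becomes a \emph{symmetric} power series in the two local parameters of the two factors, hence a power series in their elementary symmetric functions, which are local coordinates on~$\Crv'^{2,\sym}$. Everything else is a local valuation count at $\infty^{\pm}$. As a consistency check: combined with $\eta_{P}(\Crv)\equiv\Theta$ and $\Theta\cdot\Theta=2$, this lemma recovers \Cref{prop:degree-siegel}, since for an $\ell$-isogeny $\isog^{*}\Theta'\equiv\ell\Theta$ and so $s,p,q,r$ have degrees $4\ell,4\ell,12\ell,8\ell$ as maps $\Crv\to\Pvar^{1}$.
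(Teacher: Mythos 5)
Your proof is correct and follows the same route as the paper, which simply cites \cite[\S 6.1]{couveignes_ComputingFunctionsJacobians2015} and sketches the case of $s$: identify the polar components as the two divisors of pairs containing a point at infinity, note each is an Abel image hence algebraically equivalent to $\Theta'$, and count pole orders from $v_{\infty^{\pm}}(x)=-1$, $v_{\infty^{\pm}}(y)=-3$. Your additional care with $r$ along the diagonal and the contracted $g^1_2$ fills in exactly the step the paper leaves as "similar."
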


\begin{proof}
  See \cite[§6.1]{couveignesComputingFunctionsJacobians2015}. For
  instance, $s = x_1 + x_2$ has a pole of order~$1$ along each of the
  two divisors $\bigl\{(\infty_{\pm}, Q) \st Q\in \Crv_F\bigr\}$,
  where~$\infty_\pm$ are the two points at infinity on~$\Crv_F$,
  assuming that we choose a degree 6 hyperelliptic model
  for~$\Crv_F$. Each of these divisors is algebraically equivalent
  to~$\Theta_F$. The proof for $p$, $q$, and~$r$ is similar.
\end{proof}

By \Cref{thm:NS-End}, if $(A,\iota)$ is a p.p.~abelian surface with real
multiplication by~$\Z_K$, then we have an injective map $\Z_K\to \NS(A)$ given
by $\alpha\mapsto\Lb_{A}({\iota(\alpha)})$.

\begin{lem}
  \label{lem:image-divisor}
  Let~$\isog$ be a $\beta$-isogeny as above. Then the
  divisor~$\isog_P(\Crv_E)$ is algebraically equivalent to the divisor
  corresponding to the line bundle
  $\displaystyle\Lb_{\Jac(\Crv_F)}({\iota_F(\conj{\beta})})$.
\end{lem}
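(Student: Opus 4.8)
The plan is the following. Write $A=\Jac(\Crv)$ and $A'=\Jac(\Crv')$, let $\pi\from A\to A^\dualv$ and $\pi'\from A'\to A'^\dualv$ be the principal polarizations, and write $\theta_A=[\Lb_A^{1}]\in\NS(A)$ for the class of the theta divisor $\Theta$. Since $\eta_P(\Crv)$ is algebraically equivalent to $\Theta$, and since $\isog$ restricted to $\eta_P(\Crv)$ is birational onto its image (its generic fibre is a single point: set-theoretically $\isog^{-1}(\isog_P(\Crv))$ is the union of the finitely many translates $\eta_P(\Crv)+t$ for $t\in\ker\isog$, so comparing $\isog^{\ast}\isog_{\ast}\Theta=\sum_{t}T_t^{\ast}\Theta$ with $\isog^{\ast}(\isog_{\ast}\Theta)$ forces the degree to be one), the class of $\isog_P(\Crv)$ in $\NS(A')$ equals $\isog_{\ast}\theta_A$. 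It therefore suffices to prove $\isog_{\ast}\theta_A=[\Lb_{A'}^{\iota'(\conj\beta)}]$. Here I would use that Néron--Severi groups of abelian varieties are torsion free and that both $\isog_{\ast}\circ\isog^{\ast}$ and $\isog^{\ast}\circ\isog_{\ast}$ act as multiplication by $\deg\isog$ on them; in particular $\isog^{\ast}\from\NS(A')\to\NS(A)$ is injective, so it is enough to check that $\isog^{\ast}$ sends the two sides to the same class.

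On the left side, $\isog^{\ast}(\isog_{\ast}\theta_A)=\deg(\isog)\,\theta_A=N_{K/\Q}(\beta)\,\theta_A$, which under the isomorphism $\NS(A)\iso\End^{\sym}(A)$ of \cref{thm:NS-End} is $[\Lb_A^{\iota(\beta\conj\beta)}]$, using $\beta\conj\beta=N_{K/\Q}(\beta)$ in $\Z_K$. On the right side I would use the functoriality of the polarization map, $\phi_{\isog^{\ast}\Lb}=\isog^\dualv\circ\phi_\Lb\circ\isog$ for a line bundle $\Lb$ on $A'$, where $\isog^\dualv\from A'^\dualv\to A^\dualv$ is the dual homomorphism. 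Applying this to $\Lb=\Lb_{A'}^{\iota'(\conj\beta)}$, whose polarization map is $\pi'\circ\iota'(\conj\beta)$, and using the real-multiplication compatibility $\iota'(\conj\beta)\circ\isog=\isog\circ\iota(\conj\beta)$ from \cref{def:beta-isog}, gives $\phi_{\isog^{\ast}\Lb_{A'}^{\iota'(\conj\beta)}}=\bigl(\isog^\dualv\circ\pi'\circ\isog\bigr)\circ\iota(\conj\beta)$. The bracketed map is $\phi_{\isog^{\ast}\Lb_{A'}^{1}}$, which by the defining relation $\isog^{\ast}\Lb_{A'}^{1}=\Lb_A^{\iota(\beta)}$ of a $\beta$-isogeny equals $\pi\circ\iota(\beta)$. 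Hence $\phi_{\isog^{\ast}\Lb_{A'}^{\iota'(\conj\beta)}}=\pi\circ\iota(\beta\conj\beta)=\phi_{\Lb_A^{\iota(\beta\conj\beta)}}$, so $\isog^{\ast}[\Lb_{A'}^{\iota'(\conj\beta)}]=[\Lb_A^{\iota(\beta\conj\beta)}]$ by \cref{thm:NS-End} once more.

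The two images agree, and injectivity of $\isog^{\ast}$ then gives $\isog_{\ast}\theta_A=[\Lb_{A'}^{\iota'(\conj\beta)}]$, which is the assertion. I expect the computations themselves to be short; the delicate part is purely bookkeeping — getting the direction of the dual homomorphism $\isog^\dualv$ right, correctly identifying multiplication by $N_{K/\Q}(\beta)$ on $\NS$ with the class $[\Lb_A^{\iota(\beta\conj\beta)}]$ under \cref{thm:NS-End}, and the elementary but necessary point that $\isog_P(\Crv)$, as an algebraic cycle, is the $\isog$-pushforward of $\eta_P(\Crv)$ rather than a proper multiple of the reduced image.
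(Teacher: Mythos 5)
Your proof is correct, and it takes a genuinely different (though closely related) route from the paper's. The paper first invokes \cref{thm:NS-End} to write the class of $\isog_P(\Crv)$ as $\Lb_{\Jac(\Crv')}^{\iota'(\alpha)}$ for an \emph{unknown} $\alpha\in\Z_K$, then computes $\isog^*\bigl(\isog_P(\Crv)\bigr)=N_{K/\Q}(\beta)\,\Theta$ and $\isog^*\Lb_{A'}^{\iota'(\gamma)}=\Lb_A^{\iota(\gamma\beta)}$, and solves $\alpha\beta=N_{K/\Q}(\beta)$ to get $\alpha=\conj{\beta}$. You instead name the candidate $\conj{\beta}$ up front and verify the equality after applying $\isog^*$, using its injectivity on $\NS$. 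The two arguments share their computational core — both rest on $\isog^*[\isog_P(\Crv)]=N_{K/\Q}(\beta)[\Theta]$ and on $\isog^*\Lb_{A'}^{\iota'(\gamma)}=\Lb_A^{\iota(\gamma\beta)}$, which you derive cleanly from $\phi_{\isog^*\Lb}=\isog^\dualv\circ\phi_\Lb\circ\isog$ together with the RM-compatibility in \cref{def:beta-isog}. What your version buys: it avoids the paper's implicit assumption that the class of $\isog_P(\Crv)$ already lies in $\iota'(\Z_K)$ under the isomorphism of \cref{thm:NS-End} (automatic only when $\End^{\sym}(A')=\iota'(\Z_K)$), and it makes explicit the cycle-theoretic point, which the paper passes over "by definition", that $\isog_*[\eta_P(\Crv)]=[\isog_P(\Crv)]$ with multiplicity one. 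Your justification of that last point is stated confusingly (you compare $\isog^*\isog_*\Theta$ "with itself"); the clean version is either that $\isog^*\isog_*[\eta_P(\Crv)]=\sum_t T_t^*[\eta_P(\Crv)]$ has all multiplicities equal to $1$, forcing the degree $d$ of $\eta_P(\Crv)\to\isog_P(\Crv)$ to divide $1$, or simply that for $t\neq 0$ in $\ker\isog$ the translates $\eta_P(\Crv)$ and $\eta_P(\Crv)+t$ are distinct irreducible curves (the theta divisor has trivial stabilizer), hence meet in finitely many points, so $\isog$ is generically injective on $\eta_P(\Crv)$. With that sentence repaired, the argument is complete.
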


\begin{proof}
  Since~$\Jac(\Crv_F)$ is a smooth surface, the divisor~$\isog_P(\Crv_E)$ corresponds
  to a line bundle on~$\Jac(\Crv_F)$.  By \Cref{thm:NS-End}, this line bundle
  is algebraically equivalent to~$\Lb_{\Jac(\Crv_F)}({\iota_F(\alpha)})$ for
  some~$\alpha\in\smash{\End^\dagger}(\Jac(\Crv_F))$. Consider
  $\isog^*\bigl(\isog_P(\Crv_E)\bigr)$ as a divisor on~$\Jac(\Crv_E)$. By
  definition, we have
  \begin{displaymath}
    \isog^*\bigl(\isog_P(\Crv_E) \bigr) = \sum_{x\in\ker\isog} \bigl(x + \eta_P(\Crv_E) \bigr).
  \end{displaymath}
  Therefore, up to algebraic equivalence,
  \begin{displaymath}
    \isog^*\bigl(\isog_P(\Crv_E) \bigr) = (\#\ker\isog)\Theta_E = N_{K/\Q}(\beta)\Theta_E.
  \end{displaymath}
  By \Cref{def:beta-isog}, the pullback~$\isog^*\Theta_F$ corresponds to the
  line bundle~$\Lb_{\Jac(\Crv_E)}({\iota_E(\beta)})$ up to algebraic equivalence.
  Therefore, for every~$\gamma\in\Z_K$,
  \begin{displaymath}
    \isog^*\Lb_{\Jac(\Crv_F)}({\iota_F(\gamma)}) = \Lb_{\Jac(\Crv_E)}({\iota_E(\gamma\beta)}).
  \end{displaymath}
  By \Cref{thm:NS-End} applied on~$\Jac(\Crv_E)$, we have
  $\alpha\beta = N_{K/\Q}(\beta)$, so $\alpha = \conj{\beta}$.
\end{proof}

The next step is to compute the intersection number of~$\Theta_F$ and
the divisor corresponding to $\Lb_{\Jac(\Crv_F)}({\iota_F(\alpha)})$
on~$\Jac(\Crv_F)$, for every~$\alpha\in\Z_K$.

\begin{prop}
  \label{prop:theta-intersection}
  Let~$(A,\iota)$ be a p.p.~abelian surface with real
  multiplication by~$\Z_K$, and let~$\Theta$ be its theta
  divisor. Then for all~$\alpha\in\Z_K$, we have
  \begin{displaymath}
    \bigl(\Lb_A({\iota(\alpha)})\cdot \Theta\bigr)^2 = \Tr_{K/\Q}(\alpha)^2.
  \end{displaymath}
\end{prop}

\begin{proof}
  By \cite[Rem.~16]{kaniEllipticSubcoversCurve2019}, the quadratic form
  \begin{displaymath}
    D \mapsto (D\cdot\Theta)^2 - 2(D\cdot D)
  \end{displaymath}
  on~$\NS(A)$ corresponds via \cref{thm:NS-End} to
  the quadratic form on~$\Z_K$ given by
  \begin{displaymath}
    \alpha\mapsto 2\Tr_{K/\Q}(\alpha^2) - \dfrac{1}{2}\Tr_{K/\Q}(\alpha)^2.
  \end{displaymath}
  Thus, for every $\alpha = a + b\sqrt{\Delta}\in\Z_K$, we have
  \begin{displaymath}
    \bigl(\Lb_A({\iota(\alpha)})\cdot\Theta \bigr)^2
    - 2\, \bigl(\Lb_A({\iota(\alpha)})\cdot \Lb_A({\iota(\alpha)})\bigr) = 2\Tr(\alpha^2)
    - \dfrac{1}{2}\Tr(\alpha)^2 = 4b^2\Delta.
  \end{displaymath}
  On the other hand, the Riemann--Roch
  theorem~\cite[Thm.~13.3]{milneAbelianVarieties1986} gives
  \begin{displaymath}
    \bigl(\Lb_A({\iota(\alpha)})\cdot \Lb_A({\iota(\alpha)}) \bigr)
    = 2\, \chi\bigl(\Lb_A({\iota(\alpha)}) \bigr) = 2\sqrt{\deg(\iota(\alpha))} =
    2(a^2 - b^2\Delta). \qedhere
  \end{displaymath}
\end{proof}

\begin{prop}
  \label{prop:spqr-degrees}
  Let~$\isog$ be a $\beta$-isogeny as above, and let $(s, p, q, r)$ be the
  rational representation of~$\isog$ at~$P$. Then the degrees of $s$, $p$, $q$,
  and $r$ as morphisms from~$\Crv_F$ to~$\Pvar^1$ are $2\Tr_{K/\Q}(\beta)$,
  $2\Tr_{K/\Q}(\beta)$, $6\Tr_{K/\Q}(\beta)$ and $4\Tr_{K/\Q}(\beta)$
  respectively.
\end{prop}

\begin{proof}
  The degrees of $s,p,q$ and~$r$ can be computed as the intersection of the
  polar divisors from \Cref{lem:spqr-poles} and the
  divisor~$\isog_P(\Crv_E)$. By \Cref{lem:image-divisor}, the line
  bundle associated with~$\isog_P(\Crv_E)$, up to algebraic equivalence,
  is~$\Lb_{\Jac(\Crv_F)}({\iota_F(\conj{\beta})})$. Its intersection number
  with~$\Theta_F$ is nonnegative, hence by
  \Cref{prop:theta-intersection}, we have
  \begin{displaymath}
    \bigl(\isog_P(\Crv_E)\cdot\Theta_F\bigr) = \Tr_{K/\Q}(\conj{\beta}) = \Tr_{K/\Q}(\beta).
    \qedhere
  \end{displaymath}
\end{proof}

In order to reformulate \cref{prop:degree-siegel,prop:spqr-degrees} in terms of
concrete degrees of rational fractions, we use the following lemma.

\begin{lem}
  Let $s\from\Crv_E\to\Pvar^1$ be a morphism of degree~$d$.
  \begin{enumerate}
  \item \label{it:invariant}
    If~$s$ is invariant under the hyperelliptic involution~$i$,
    then we can write
     $ s(u,v) = X(u)$
    where the degree of~$X$ is bounded by~$d/2$.
  \item \label{it:noninvariant}
    In general, let~$X$, $Y$ be the rational fractions such that
    \begin{displaymath}
      s(u,v) = X(u) + v\, Y(u).
    \end{displaymath}
    Then the degrees of~$X$ and~$Y$ are bounded by~$d$ and~$d-3$
    respectively.
  \end{enumerate}
\end{lem}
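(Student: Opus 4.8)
The plan is to exploit the canonical decomposition of the function field of a hyperelliptic curve. Since $\Crv\defby v^2=E_\Crv(u)$ with $E_\Crv$ squarefree of degree $5$ or $6$, every function $s\in k(\Crv)$ is uniquely of the form $s(u,v)=X(u)+v\,Y(u)$ with $X,Y\in k(u)$, and the hyperelliptic involution $i$ acts by $i^\ast s=X(u)-v\,Y(u)$. For part~(1), $i$-invariance of $s$ forces $Y=0$, so $s$ is the composite $\Crv\xrightarrow{u}\Pvar^1\xrightarrow{X}\Pvar^1$; since $\deg(u\colon\Crv\to\Pvar^1)=2$, multiplicativity of degrees gives $\deg(s)=2\deg X$, i.e.\ $\deg X=d/2$. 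The same identity $\deg(g(u))=2\deg g$ for non-constant $g\in k(u)$ will be used repeatedly below.

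For part~(2), I would first record two elementary facts about a smooth projective curve: the polar divisor of a sum $f+g$ is dominated by the sum of the polar divisors of $f$ and of $g$, and $i^\ast$ preserves the degree of a polar divisor since $i$ is an automorphism. Applying the first fact to $2X(u)=s+i^\ast s$ and to $2v\,Y(u)=s-i^\ast s$, and using $\deg((s)_\infty)=\deg((i^\ast s)_\infty)=d$, both $X(u)$ and $v\,Y(u)$ have polar divisor of degree at most $2d$. Since $X(u)=X\circ u$, the identity above gives $2\deg X=\deg(X(u))\le 2d$, hence $\deg X\le d$.

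The bound on $\deg Y$ is where a small trick is needed, because $v$ and $Y(u)$ interact at the Weierstrass points and at infinity. I would square: $\bigl(v\,Y(u)\bigr)^2=E_\Crv(u)\,Y(u)^2$ is the pullback along $u$ of the rational function $E_\Crv Y^2$ on $\Pvar^1$, so multiplicativity of degrees yields $\deg(v\,Y(u))=\deg(E_\Crv Y^2)$, the latter taken as a reduced fraction in $u$. Writing $Y=P/Q$ in lowest terms, and using that $E_\Crv$ is squarefree so that $\gcd(E_\Crv,Q^2)=\gcd(E_\Crv,Q)$ has degree at most $\deg E_\Crv\le 6$, a direct comparison of numerator and denominator degrees gives $\deg(E_\Crv Y^2)\ge 2\deg Y-\deg E_\Crv\ge 2\deg Y-6$. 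One also notes that $v\,Y(u)$ is non-constant whenever $Y\neq 0$, since otherwise $E_\Crv$ would be a square in $k(u)$, impossible for a squarefree polynomial of positive degree. Combining $2\deg Y-6\le\deg(v\,Y(u))\le 2d$ gives $\deg Y\le d+3$.

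The main obstacle is exactly this last inequality: one must control how far $\deg(v\,Y(u))$ can fall below $2\deg Y$, which happens through cancellation between $E_\Crv$ and the denominator of $Y$, and --- depending on whether the chosen model has degree $5$ or $6$ --- through the orders of $v$ and $Y(u)$ at the points at infinity. The squaring step is what makes the argument uniform: it trades a case analysis over Weierstrass points and points at infinity for a single statement about degrees of one-variable rational functions, and the squarefreeness of $E_\Crv$ is precisely what caps the loss at $\deg E_\Crv$, producing the slack of $+3$ in $\deg Y$ and no slack in $\deg X$.
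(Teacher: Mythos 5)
Your proof is correct. Part (1) and the bound on $X$ in part (2) follow the paper's own argument exactly: decompose via the involution, use $\deg(u\colon\Crv\to\Pvar^1)=2$, and bound the polar divisor of $s\pm i^\ast s$ by $2d$. Where you genuinely diverge is the bound on $Y$. The paper forms $2Y(u)=(s(u,v)-s(u,-v))/v$ and adds the degree of the zero divisor of $v$ (the six Weierstrass points of a degree-$6$ model) to get $\deg\bigl(2Y(u)\bigr)\le 2d+6$; you instead keep the product $v\,Y(u)$, whose degree is at most $2d$, and recover $\deg Y$ by squaring into $k(u)$, where the squarefreeness of $E_\Crv$ caps the cancellation between $E_\Crv$ and the denominator of $Y^2$ at $\deg E_\Crv\le 6$. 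The two losses of $6$ are of course the same six points seen from the function-field side versus the divisor side, but your version has the merit of being uniform in the degree-$5$ versus degree-$6$ model and of making explicit exactly where the slack of $+3$ comes from; the paper's version is shorter but implicitly assumes the degree-$6$ normalization it uses elsewhere. Both yield the stated bounds $d$ and $d+3$.
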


\begin{proof}
  For~\eqref{it:invariant}, use the fact that the function~$u$ has
  degree~$2$. For~\eqref{it:noninvariant}, write
  \begin{displaymath}
    s(u,v) + s(u,-v) = 2X(u),\quad \dfrac{s(u,v) - s(u,-v)}{v} = 2Y(u).
  \end{displaymath}
  The degrees of these morphisms are bounded by~$2d$ and~$2d-6$
  respectively.
\end{proof}

We can thus summarize the rational reconstruction step as follows.

\begin{prop}
  \label{prop:reconstruction}
  Let $\widetilde{\isog}_P$ and $\widetilde{\isog}_{i(P)}$ be local
  lifts of~$\isog_P$ at~$P$ and~$i(P)$ in the uniformizers~$z$
  and~$i(z)$. Let $\prc = 8\ell+1$ in the Siegel case, and
  $\prc = 4\Tr_{K/\Q}(\beta)+1$ in the Hilbert case.  Then,
  given~$\widetilde{\isog}_P$ and~$\widetilde{\isog}_{i(P)}$ to
  precision~$O(z^\prc)$, we can compute the rational representation
  of~$\isog$ at~$P$ within~$\Otilde(\prc)$ operations in~$k'$.
\end{prop}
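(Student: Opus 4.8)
The plan is to reduce the reconstruction of the whole tuple $(s,p,q,r)$ to that of $s$ and $p$ alone, and then to carry out a univariate Padé-type rational reconstruction for those two fractions. First I would write each of $s,p,q,r\in k'(u,v)$ in the form $f = X_f(u) + v\,Y_f(u)$ with $X_f,Y_f\in k'(u)$; combining the preceding lemma on the shape of morphisms $\Crv\to\Pvar^1$ with \cref{prop:degree-siegel} (resp.\ \cref{prop:spqr-degrees}) bounds $\deg X_s,\deg X_p$ by $4\ell$ and $\deg Y_s,\deg Y_p$ by $4\ell+3$ in the Siegel case, and gives the same bounds with $4\ell$ replaced by $2\Tr_{K/\Q}(\beta)$ in the Hilbert case. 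The fractions $q$ and $r$ have larger degree, and I would \emph{not} reconstruct them this way. Instead, on the symmetric square $\Crv'^{2,\sym}$ the equations $y_i^2 = E_{\Crv'}(x_i)$ show that $q^2 = y_1^2 y_2^2$ is symmetric in $x_1,x_2$, hence a universal polynomial in $s$ and $p$; and expanding $(x_2-x_1)^2 r^2 = y_1^2+y_2^2-2y_1y_2$ shows that $r^2(s^2-4p)$ is a universal polynomial in $s$, $p$ and $q$. Thus, once $s$ and $p$ are known, $q$ and $r$ are determined up to a sign each, and these signs are read off from the constant terms of the given lift $\widetilde{\isog}_P$. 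So it suffices to recover $s$ and $p$.

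For $f\in\{s,p\}$, I would take $z = u-u_0$ as uniformizer at the chosen affine base point $P=(u_0,v_0)$ (so $v_0\neq 0$), and the same function $i(z)$ at $i(P)$; then $u = u_0 + z$ identically, and on the two branches $v = \pm v(z)$ with $v(z) = \sqrt{E_{\Crv}(u_0+z)} \in k'[[z]]$ a unit. Evaluating $s$ on the lifts $\widetilde{\isog}_P$ and $\widetilde{\isog}_{i(P)}$ gives power series with
\begin{displaymath}
  s_P(z) = X_s(u_0+z) + v(z)\,Y_s(u_0+z), \qquad s_{i(P)}(z) = X_s(u_0+z) - v(z)\,Y_s(u_0+z)
\end{displaymath}
modulo $z^{\prc}$; the half-sum gives $X_s(u_0+z)$ and the half-difference, divided by $v(z)$, gives $Y_s(u_0+z)$, both modulo $z^{\prc}$. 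Writing $X_s(u_0+z) = A(z)/B(z)$ with $\deg A,\deg B$ bounded as above, fast rational reconstruction recovers the coprime pair $(A,B)$ from the truncated series as soon as $\prc \ge \deg A + \deg B + 1$; the binding instance is $Y_s$ (resp.\ $Y_p$), for which $\prc = 2(4\ell+3)+1 = 8\ell+7$ in the Siegel case, resp.\ $\prc = 2(2\Tr_{K/\Q}(\beta)+3)+1 = 4\Tr_{K/\Q}(\beta)+7$ in the Hilbert case, suffices. Translating back by $z\mapsto u-u_0$ yields $X_s,Y_s\in k'(u)$, hence $s$; likewise $p$. The half-gcd algorithm runs in $\Otilde(\prc)$ operations in $k'$ \cite{bostan_AlgorithmesEfficacesCalcul2017}, and the remaining steps — forming and inverting $v(z)$, evaluating the universal polynomials in $s,p$ and extracting the square roots giving $q$ and $r$ (a polynomial square root plus a few polynomial multiplications), and fixing the two signs from $\widetilde{\isog}_P$ — are all $\Otilde(\prc)$ as well.

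The step I expect to be the main obstacle is justifying that $\prc$ is \emph{exactly} large enough, i.e.\ that the truncated series determine $X_s,Y_s,X_p,Y_p$ uniquely among rational fractions of the prescribed degrees. This comes down to the coprimality of the numerators and denominators of $s$ and $p$, so that the half-gcd returns precisely the sought convergent and no spurious lower-degree solution intervenes; this holds because $s$ and $p$ genuinely attain the degrees predicted by \cref{prop:degree-siegel} / \cref{prop:spqr-degrees}, so the lemma's degree bounds are tight. One must also check that the square-root extractions are consistent: the expressions for $q^2$ and for $r^2(s^2-4p)$, viewed as elements of $k'(u,v)$, are perfect squares precisely because $q$ and $r$ lie in that field, and the only remaining ambiguity is a sign, resolved over the quadratic extension $k'$ already at hand, so no further field extension is incurred. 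Assembling $(s,p,q,r)$ then completes the algorithm.
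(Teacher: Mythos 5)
Your proposal follows essentially the same route as the paper's proof: reduce to reconstructing $s$ and $p$, split each into even and odd parts $X(u)+v\,Y(u)$ using the two branches $\widetilde{\isog}_P$ and $\widetilde{\isog}_{i(P)}$, and recover univariate fractions of degree at most $4\ell+3$ (resp.\ $2\Tr_{K/\Q}(\beta)+3$) by fast Pad\'e approximation from $2d+1$ coefficients, which gives exactly $\prc=8\ell+7$ (resp.\ $4\Tr_{K/\Q}(\beta)+7$). Your explicit derivation of $q$ and $r$ from $s$, $p$ and the equation of $\Crv'$ (via $q^2=E_{\Crv'}(x_1)E_{\Crv'}(x_2)$ and $r^2(s^2-4p)=E_{\Crv'}(x_1)+E_{\Crv'}(x_2)-2q$, with signs read off the lift) is correct and fills in a step the paper only asserts.

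Two remarks. First, by taking $z=u-u_0$ with $v_0\neq 0$ you silently exclude the case where $P$ is a Weierstrass point of $\Crv$, which the paper treats separately: there $i(P)=P$, so $s$ and $p$ are invariant under the hyperelliptic involution and have degree only $2\ell$ (resp.\ $\Tr_{K/\Q}(\beta)$) in $u$, but $u-u_0$ has valuation $2$ in a uniformizer $z$, so one reconstructs from $\widetilde{\isog}_P$ at precision $O(z^{8\ell+1})$, still within $\prc$. Since the worked example in \cref{sec:ex} uses a Weierstrass base point, this case is not vacuous and should be covered. Second, the issue you single out as the main obstacle --- uniqueness of the reconstructed fraction at precision $\prc$ --- is not actually one: if two fractions with numerator and denominator of degree at most $d$ agree modulo $z^{2d+1}$, cross-multiplying shows they coincide, so no tightness of the degree bounds and no coprimality hypothesis is needed, only the upper bounds from the preceding lemma.
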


\begin{proof}
  It is enough to recover the rational fractions~$s$ and~$p$;
  afterwards,~$q$ and~$r$ can be deduced from the equation
  of~$\Crv_F$.

  First, assume that~$P$ is a Weierstrass point of~$\Crv_E$. Then $s$ and~$p$
  are invariant under the hyperelliptic involution. Therefore, we have to
  recover rational fractions in~$u$ of degree $d\leq 2\ell$
  (resp.~$d\leq \Tr_{K/\Q}(\beta)$). This can be done in quasi-linear time from
  their power series expansion to precision
  $O(u^{2d+1})$~\cite[§7.1]{bostanAlgorithmesEfficacesCalcul2017}.  Since~$u$
  has valuation~$2$ in~$z$, it suffices to compute~$\widetilde{\isog}_P$ to
  precision~$O(z^{4d+1})$.

  Second, assume that~$P$ is not a Weierstrass point of~$\Crv_E$. Then the
  series defining $s(u,-v)$ and $p(u,-v)$ are given
  by~$\widetilde{\isog}_{i(P)}$. It is enough to compute rational fractions of
  degree $d\leq 4\ell$ (resp.~$d\leq 2\Tr_{K/\Q}(\beta)$) in~$u$. Since~$u$ has
  valuation~$1$ in~$z$, this can be done in quasi-linear time
  if~$\widetilde{\isog}_P$ and~$\widetilde{\isog}_{i(P)}$ are known up to
  precision~$O(z^{2d+1})$.
\end{proof}

\section{Summary of the algorithm}
\label{sec:summary}

Now let us summarize the isogeny algorithm and prove \cref{thm:main}. We also
state an analogous result in the case of $\beta$-isogenies
(\cref{thm:proved-main-hilbert}).

Let~$k$ be a field, and let~$A,A'$ be two p.p.~abelian surfaces~$A,A'$
over~$k$. We specify them by giving their Igusa invarants~$j$ and~$j'$, as well
as a genus~$2$ curve equation~$E$ such that $\Jac(\Crv_E) = A$ to resolve
twisting ambiguities. In the Siegel case, we assume that $A$ and~$A'$ are
$\ell$-isogenous over~$k$ for some prime~$\ell$. In the Hilbert case, we assume
that $A$ and~$A'$ have real multiplication by~$\Z_K$ for some real quadratic
field~$K$ and are $\beta$-isogenous for some totally positive
prime~$\beta\in\Z_K$. We then compute the isogeny $\isog:A\to A'$ as follows.

\begin{algo}~
  \label{algo:main}
  \begin{enumerate}
  \item Construct a genus~$2$ curve equation~$F$ over~$k$ such
    that~$A'= \Jac(\Crv_F)$ over~$\kbar$ using Mestre's
    algorithm~\cite{mestreConstructionCourbesGenre1991}. In the Hilbert case,
    use \Cref{algo:hilb-curve-2} to ensure that $E$ and~$F$ are
    potentially Hilbert-normalized.
  \item Compute at most~$4$ candidates for the tangent matrix~$d\isog$
    of~$\isog$ using \cref{prop:norm-matrix} or~\ref{prop:norm-matrix-hilbert}.
    Run the rest of the algorithm on each candidate.
  \item Make a change of basis to ensure that~$E$, $F$ and~$d\isog$ are defined
    over~$k$ (but not necessarily Hilbert-normalized.)
  \item Choose a suitable base point~$P$ on~$\Crv_E$ using \cref{prop:imagept}
    and compute the power series~$\widetilde{\isog}_P$
    and~$\widetilde{\isog}_{i(P)}$ to precision~$O(z^{8\ell+1})$
    or~$O(z^{4\Tr_{K/\Q}(\beta)+1})$ respectively, following \cref{prop:newton}.
  \item \label{step:reconstruction} Try to recover the rational
    representation of~$\isog$ at~$P$ using
    \Cref{prop:reconstruction}. Output the result if rational
    fractions of the correct degrees are found.
  \end{enumerate}
\end{algo}

\begin{thm}
  \label{thm:main_proved}
  Let~$\ell$ be a prime, and let~$k$ be a field such that $\chr k = 0$ or
  $\chr k > 8\ell+ 1$.  Let~$\mathbf{U}\subset\Atwo(k)$ be the open set
  consisting of p.p.~abelian surfaces~$\av$ such that $\Aut(\av)\iso\{\pm 1\}$
  and $j_3(A)\neq 0$.  Let $A,A' \in \mathbf{U}$, let $j,j'$ be their Igusa
  invariants, and let~$E$ be a genus~$2$ curve equation over~$k$ such
  that~$A = \Jac(\Crv_E)$. Assume that~$A$ and~$A'$ are $\ell$-isogenous
  over~$k$, and that the subvariety of $\Avar^3\times \Avar^3$ cut out by the
  Siegel modular equations $\Psi_{\ell,i}$ for $1\leq i\leq 3$ is normal
  at~$(j, j')$. Then, given~$j,j'$ and~$E$ as well as the derivatives
  of the Siegel modular equations of level~$\ell$ at~$(j,j')$, \Cref{algo:main}
  succeeds and returns
  \begin{enumerate}
  \item a genus~$2$ curve equation $F$ over~$k$ such that~$A' = \Jac(\Crv_F)$,
  \item a point $P\in \Crv_E(k')$ where~$k'/k$ is a quadratic extension,
  \item the rational representation $(s,p,q,r)\in k'(u,v)^4$ at the base
    point~$P$ of an $\ell$-isogeny $\isog\from \Jac(\Crv_E)\to\Jac(\Crv_F)$
    defined over~$k$.
  \end{enumerate}
  This algorithm costs~$\Otilde(\ell)$ elementary operations and~$O(1)$ square
  roots in~$k'$.
\end{thm}

\begin{proof}
  Mestre's algorithm returns a curve equation~$F$ defined over~$k$, and
  costs~$O(1)$ operations in~$k$ and~$O(1)$ square roots. Under our hypotheses,
  $\isog$ is generic by \cref{prop:moduli-generic}, so \cref{prop:norm-matrix}
  allows us to recover $\Sym^2(d\isog)$ using $O(1)$ operations in $k$, so we
  recover~$d\isog$ up to sign using~$O(1)$ square roots and elementary
  operations.  We can twist~$F$ in a unique way so that~$d\isog$ is defined
  over~$k$. Then we must have~$A = \Jac(\Crv_F)$ over~$k$. Given our hypothesis
  on~$\chr k$, we can compute the local lifts and perform the rational
  reconstruction in~$\Otilde(\ell)$ operations in~$k'$.
\end{proof}

In the Hilbert case, \cref{thm:main_proved} has the following analogue.

\begin{thm}
  \label{thm:proved-main-hilbert}
  Let~$K$ be a real quadratic field and~$\beta\in \Z_K$ a totally positive
  prime. Let~$k$ be a field such that $\chr k=0$ or
  $\chr k > 4\Tr_{K/\Q}(\beta)+1$. Let~$A,A'\in \mathbf{U}$ be p.p.~abelian
  surfaces over~$k$ with real multiplication by~$\Z_K$, let~$j,j'$ be their
  Igusa invariants, and let~$E$ be a curve equation over~$k$ such
  that~$A = \Jac(\Crv_E)$.  Assume that $A$ and~$A'$ are $\beta$-isogenous but
  not~$\betabar$-isogenous, and that the subvariety of~${\Avar^3\times\Avar^3}$
  cut out by the Hilbert modular equations of level~$\beta$ and the Humbert
  equation is normal at~$(j,j')$.  Then, given~$j,j',E$, and the derivatives of
  the Hilbert modular equations of level~$\beta$ at~$(j,j')$, \cref{algo:main}
  succeeds and returns
  \begin{enumerate}
  \item a genus~$2$ curve equation~$F$ over~$k$ such that $A'=\Jac(\Crv_F)$,
  \item a point $P\in \Crv_E(k')$ where~$k'/k$ is a quadratic extension,
  \item at most~$4$ quadruples $(s,p,q,r)\in k'(u,v)^4$, one of which is the rational representation
     at the base point~$P$ of a~$\beta$-isogeny
    $\isog\from \Jac(\Crv_E)\to\Jac(\Crv_F)$ defined over~$k$.
  \end{enumerate}
  This algorithm costs $\Otilde\bigl(\Tr_{K/\Q}(\beta)\bigr) + O_K(1)$
  elementary operations and~$O(1)$ square roots in~$k'$. The implied constants,
  except in~$O_K(1)$, are independent of~$K$.
\end{thm}

\begin{proof}
  By \cref{cor:generic-hilbert}, the isogeny $\isog:A\to A'$ is generic, and
  defined over~$k$. Using \cref{algo:hilb-curve-2}, we obtain potentially
  Hilbert-normalized curves equations~$E'$ and~$F'$ defined over a common
  quadratic extension of~$k$; this costs $O_K(1)$ elementary operations
  and~$O(1)$ square roots in~$k$. We obtain four candidates for $\pm
  d\isog$. For each candidate, we now make a change of variables to~$E$ and the
  (not necessarily Hilbert-normalized) curve equation~$F$ output by Mestre's
  algorithm, so that both~$\Crv_E$ and~$\Crv_F$ are defined over~$k$, and
  twist~$\Crv_F$ so that~$d\isog$ is also defined over~$k$. We then have
  $A' = \Jac(\Crv_F)$, and we continue as in the Siegel case. For the correct
  value of~$d\isog$, rational reconstruction will succeed and output fractions
  of the correct degrees.
\end{proof}

\begin{rem}
In the Hilbert case, we expect that the algorithm returns only one answer for the
rational representation of~$\isog$ at~$P$, as the incorrect candidates
for~$d\isog$ should lead to garbage in Step~\eqref{step:reconstruction} of the
algorithm. Note that testing for correctness of the output might be more
expensive than the isogeny algorithm itself.
\end{rem}

  \section{The case \texorpdfstring{$K = \Q(\sqrt{5})$}{K=Q(√5)}}
\label{sec:Qr5}

In this final section, we present a variant of our isogeny algorithm in the
case of p.p.~abelian varieties with real multiplication by~$\Z_K$ where
$K = \Q(\sqrt{5})$.  We work over~$\C$, but the methods of §\ref{sec:moduli}
show that the computations remain valid over a general base. The Humbert
surface attached to~$K$ is rational: its function field can be generated by
only two elements called the Gundlach invariants.  Having only two
coordinates reduces the size of modular equations, allowing us to illustrate
our algorithm with an example of a cyclic isogeny of degree~$11$ over a finite
field.

\subsection{Hilbert modular forms for \texorpdfstring{$K = \Q(\sqrt{5})$}{K=Q(√5)}}

We keep the notation used to describe the Hilbert embedding
in~§\ref{subsec:hilbert-siegel}. Hilbert modular forms have Fourier expansions
in terms of
\begin{displaymath}
  w_1 \defi \exp\bigl(2\pi i(e_1 t_1 + \conj{e_1}t_2)\bigr)
  \quad\text{and}\quad
  w_2 \defi \exp\bigl(2\pi i(e_2 t_1 + \conj{e_2}t_2)\bigr).
\end{displaymath}
We use this notation and the term \emph{$w$-expansions} to avoid any confusion with
$q$-expansions of Siegel modular forms.  Apart from the constant term, a term
in~$w_1^a w_2^b$ can appear with a nonzero coefficient only when~$ae_1 + be_2$
is a totally positive element of~$\Z_K$.  Since~$e_1 = 1$ and~$e_2$ has
negative norm, for a given~$a$, only finitely many~$b$'s appear. Therefore, we
can consider truncations of~$w$-expansions as elements
of~$\C[w_2,w_2^{-1}][[w_1]]$ modulo an ideal of the form~$(w_1^\prc)$.

\begin{thm}[{\cite{nagaokaRingIlbertModular1983}}]
  \label{thm:hilbert-structure}
  The graded $\C$-algebra of symmetric Hilbert modular forms of even parallel
  weight for~$K = \Q(\sqrt{5})$ is generated by three elements~$G_2$,
  $F_6$, $F_{10}$ of respective weights~$2$, $6$ and $10$,
  with~$w$-expansions
  \begin{align*}
    G_2(t) &= 1 + (120 w_2 + 120) w_1 \\
           &\quad + \bigl(120 w_2^3 + 600 w_2^2 + 720
             w_2 + 600 + 120w_2^{-1}\bigr)w_1^2 + O(w_1^3),\\
    F_{6}(t) &= (w_2 + 1) w_1 + \bigl(w_2^3 + 20 w_2^2 - 90 w_2 + 20 +
               w_2^{-1}\bigr) w_1^2 + O(w_1^3), \\
    F_{10}(t) &= (w_2^2 - 2 w_2 + 1) w_1^2 + O(w_1^3).
  \end{align*}
\end{thm}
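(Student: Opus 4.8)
\textbf{Proof plan for \Cref{thm:hilbert-structure}.} The statement is a classical result of Nagaoka, so the cleanest route is to cite \cite{nagaoka_RingHilbertModular1983} for the structure of the graded ring and then verify the stated $w$-expansions independently, since the precise normalizations of $G_2$, $F_6$, $F_{10}$ matter for the computations in this appendix. First I would recall (or take as given from Nagaoka) that the ring of symmetric Hilbert modular forms of even parallel weight for $K=\Q(\sqrt5)$ is a polynomial ring on three algebraically independent generators of weights $2$, $6$, $10$; the key input here is that the ring of \emph{all} Hilbert modular forms for $\Q(\sqrt5)$ is generated by forms of weights $2,6,10$ together with an odd form of weight $5$ (the product of the ten theta constants), and that the even-parallel-weight symmetric subring drops the weight-$5$ generator. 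The dimensions of the graded pieces can be read off from the Hilbert series $1/((1-q^2)(1-q^6)(1-q^{10}))$, which one can cross-check against the dimension formula for $\Gamma_K$ (e.g.\ via the trace formula or the tables in \cite{bruinier_HilbertModularForms2008}).

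Next I would pin down the generators concretely. The natural candidate for $G_2$ is the restriction to the Hilbert surface of the normalized Eisenstein series of weight $2$ for $\Gamma_K$; its constant term is $1$ and its first Fourier coefficients are computed from the standard formula for Eisenstein series over a real quadratic field, giving the coefficient $120$ in front of $w_1$ (this is the analogue of $240$ for level $1$ over $\Q$, adjusted by the zeta values of $K$). For $F_6$ and $F_{10}$ I would use the fact that $\chi_{10}$ and a suitable weight-$6$ cusp form pull back under the Hilbert embedding $H$ to symmetric Hilbert modular forms of weights $(5,5)$ and $(3,3)$ — wait, more carefully: by \Cref{cor:scalar-pullback}, a scalar Siegel form of weight $k$ pulls back to a symmetric Hilbert form of weight $(k,k)$, so $\chi_{10}$ gives a weight-$10$ symmetric Hilbert form and $\chi_{12}$ a weight-$12$ one; the weight-$6$ generator $F_6$ is not a pullback of $\psi_6$ (which would give weight $6$ but is not a cusp form) but rather a genuinely new cusp form, which can be produced as a Rankin–Cohen-type bracket or identified inside the space using theta series. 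I would compute the $w$-expansions of these chosen forms to precision $O(w_1^3)$ directly from their defining series (Eisenstein series formula, or theta product expansions), and check that they match the displayed expressions. The normalizations in the theorem are fixed by requiring the leading coefficients shown (constant term $1$ for $G_2$, leading term $(w_2+1)w_1$ for $F_6$, leading term $(w_2-1)^2 w_1^2$ for $F_{10}$).

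Finally I would verify algebraic independence and generation. Algebraic independence of $G_2, F_6, F_{10}$ follows because the Hilbert surface is $2$-dimensional, so any three forms have \emph{some} algebraic relation only if they generate a ring of transcendence degree $\le 2$; to rule this out one checks that the Jacobian of $(G_2^3, F_6, \ldots)$ or equivalently the map to weighted projective space $\Pvar(2,6,10)$ is generically finite, which is visible from the $w$-expansions (the leading terms are independent). Generation is the part I expect to be the main obstacle if one does not simply quote Nagaoka: it requires showing that every symmetric even-parallel-weight form is a polynomial in the three generators, for which the standard argument is a dimension count — verify that the number of monomials $G_2^a F_6^b F_{10}^c$ of weight $2k$ equals $\dim M_{2k}(\Gamma_K)^{\mathrm{sym}}$ for $k$ up to some bound, and invoke a Sturm-type bound (as in \cite{burgosgil_HeckeSturmBounds2017}) to conclude that once the Hilbert series match in low degree and no new generators or relations can appear below the Sturm bound, equality holds in all degrees. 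Since all of this is already in the literature, the honest write-up is: cite \cite{nagaoka_RingHilbertModular1983} for the ring structure, and present the $w$-expansion computation as the only thing requiring explicit verification, noting it is a finite computation done in the same spirit as \Cref{alg:qexp-to-cov}.
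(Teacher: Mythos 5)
The paper gives no proof of this theorem: it is quoted directly from \cite{nagaoka_RingHilbertModular1983}, with the $w$-expansions serving only to fix the normalizations of the generators, exactly as you propose. Your plan — cite the structure result and verify the expansions by a finite computation — is the same approach the paper takes here (and for the analogous Siegel statement, \cref{thm:siegel-structure}), so nothing further is needed.
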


Following~\cite{milioModularPolynomialsIlbert2020}, we define the
\emph{Gundlach invariants} for~$K = \Q(\sqrt{5})$ as
\begin{displaymath}
  g_1 \defi \dfrac{G_2^5}{F_{10}}\quad \text{and}\quad  g_2 \defi \dfrac{G_2^2 F_6}{F_{10}}.
\end{displaymath}
Recall that we denote by~$\sigma$ the involution $(t_1,t_2)\mapsto(t_2,t_1)$
of~$\Htwo(\C)$. The Gundlach invariants define a birational map
$\Htwo(\C)/\sigma\to\C^2$.

By \Cref{prop:mf-pullback}, the pullbacks of the Siegel modular forms
$\psi_4$, $\psi_6$, $\chi_{10}$ and~$\chi_{12}$ via the Hilbert
embedding~$H$ are symmetric Hilbert modular forms of even weight, so
they have expressions in terms of~$G_2, F_6, F_{10}$. These
expressions can be computed using linear algebra on Fourier
expansions~\cite[Prop.~3.2]{lauterComputingGenusCurves2011}: in our
case, the Hilbert embedding is defined by $e_1 = 1$,
$e_2 = (1 - \sqrt{5})/2$, so
\begin{displaymath}
  q_1 = w_1, \quad q_2 = w_2, \quad q_3 =
  w_1 w_2.
\end{displaymath}
As a corollary, we obtain the expression for the pullback of the Igusa
invariants.

\begin{prop}[{\cite[Prop.~4.5]{lauterComputingGenusCurves2011}}]
  \label{prop:igusa-pullback}
  In the case~$K = \Q(\sqrt{5})$, we have
  \begin{displaymath}
    \begin{aligned}
      H^*j_1 &= 8 g_1 \Bigl(3 \dfrac{g_2^2}{g_1} - 2\Bigr)^5, \\
      H^*j_2 &= \dfrac{1}{2} g_1 \Bigl( 3 \dfrac{g_2^2}{g_1} -
        2\Bigr)^3, \\
      H^*j_3 &= \dfrac{1}{8} g_1 \Bigl( 3 \dfrac{g_2^2}{g_1} -
        2\Bigr)^2 \Bigl( 4 \dfrac{g_2^2}{g_1} +
        2^53^2\dfrac{g_2}{g_1} - 3\Bigr).
    \end{aligned}
  \end{displaymath}
\end{prop}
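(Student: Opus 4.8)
\textbf{Proof proposal for \cref{prop:igusa-pullback}.}

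The plan is to compute the three expressions by linear algebra on Fourier ($w$-) expansions, exactly as announced in the text just before the statement. First I would recall from \cref{prop:mf-pullback} and \cref{cor:scalar-pullback} that the pullbacks $H^*\psi_4$, $H^*\psi_6$, $H^*\chi_{10}$, $H^*\chi_{12}$ are symmetric Hilbert modular forms of even parallel weights $4,6,10,12$ respectively. By \cref{thm:hilbert-structure}, each such form lies in the graded $\C$-algebra $\C[G_2,F_6,F_{10}]$, so $H^*\psi_4$ is a $\C$-linear combination of $G_2^2$; $H^*\psi_6$ of $G_2^3$ and $F_6$; $H^*\chi_{10}$ of $G_2^5$, $G_2^2F_6$ and $F_{10}$; and $H^*\chi_{12}$ of $G_2^6$, $G_2^3F_6$, $F_6^2$ and $G_2 F_{10}$. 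Each of these coefficient sets is finite and small, so the identification is a finite linear problem.

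Next I would make the substitution of variables explicit. Since $K=\Q(\sqrt 5)$ has $\Delta=5\equiv 1\pmod 4$, we have $e_1=1$, $e_2=(1-\sqrt5)/2$, and from the definition of the Hilbert embedding in \cref{subsec:hilbert-siegel} together with \cref{prop:hilbert-embedding} one reads off $q_1=w_1$, $q_2=w_2$, $q_3=w_1w_2$ (this is the content of \cite[Prop.~2.12]{milio_ModularPolynomialsHilbert2017}). Substituting this into the $q$-expansions of $\psi_4,\psi_6,\chi_{10},\chi_{12}$ given in \cref{thm:siegel-structure} yields $w$-expansions for the pullbacks up to some precision $\prc$; comparing with the $w$-expansions of the monomials in $G_2,F_6,F_{10}$ from \cref{thm:hilbert-structure}, solving the resulting linear systems pins down the coefficients uniquely once $\prc$ is large enough. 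Sturm-type bounds for Hilbert modular forms guarantee that a finite $\prc$ suffices, and in practice a low precision is enough; this is precisely the computation carried out in \cite[Cor.~2.14]{milio_ModularPolynomialsHilbert2017}, which I would cite for the explicit coefficients.

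Finally, having the four expressions $H^*\psi_4,\dots,H^*\chi_{12}$ in terms of $G_2,F_6,F_{10}$, I would substitute them into the definitions $j_1=2^{-8}\psi_4\psi_6/\chi_{10}$, $j_2=2^{-5}\psi_4^2\chi_{12}/\chi_{10}^2$, $j_3=2^{-14}\psi_4^5/\chi_{10}^2$, clear the common factors of $G_2$ and $F_{10}$, and rewrite everything in terms of the Gundlach invariants $g_1=G_2^5/F_{10}$ and $g_2=G_2^2F_6/F_{10}$, using that the ratios $g_2^2/g_1 = G_2^{-1}F_6^2/F_{10}$ and $g_2/g_1 = G_2^{-3}F_6$ are the natural degree-zero coordinates; collecting terms gives the stated formulas. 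I do not expect a genuine obstacle here: the only mild subtlety is bookkeeping the powers of $2$ coming from our normalization of $\chi_{10},\chi_{12}$ (which differ from Igusa's and from Milio's), so I would double-check the constants $8$, $1/2$, $1/8$ by evaluating both sides on the leading $w$-expansion terms, or simply by quoting \cite[Cor.~2.14]{milio_ModularPolynomialsHilbert2017} after matching normalizations. The computation can be verified, as elsewhere in the paper, by a numerical evaluation at a point of $\Half_1^2$.
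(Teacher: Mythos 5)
Your proposal is correct and follows essentially the same route as the paper, which states this result as a citation of Milio and, in the preceding paragraph, describes exactly the method you outline: pull back $\psi_4,\psi_6,\chi_{10},\chi_{12}$ via $q_1=w_1$, $q_2=w_2$, $q_3=w_1w_2$, identify them in $\C[G_2,F_6,F_{10}]$ by linear algebra on Fourier expansions, and substitute into the definitions of the Igusa invariants. Your attention to the normalization of $\chi_{10}$ and $\chi_{12}$ is the right thing to check when matching constants against the cited source.
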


Let $\beta\in\Z_K$ be a totally positive prime. We define the \emph{Hilbert
  modular equations of level~$\beta$} in terms of Gundlach invariants to be
the irreducible polynomials
$\Psi_{\beta,1},\Psi_{\beta,2} \in \Q[G_1, G_2,G_1',G_2']$ with the following
properties:
\begin{itemize}
\item $\Psi_{\beta,1}\in \Q[G_1,G_2,G_1']$ is the (non-monic) minimal
  polynomial of the meromorphic function~$g_1(t/\beta)$ over the field
  $\C\bigl(g_1(t), g_2(t)\bigr)$,
\item We have $\deg_{G_2'} \Psi_{\beta,2} = 1$ and an equality of meromorphic
  functions
  \begin{displaymath}
    \ g_2(t/\beta) = \Psi_{\beta,2}\bigl(g_1(t),g_2(t),g_1(t/\beta)\bigr).
  \end{displaymath}
\end{itemize}
These modular equations have been computed in full up to
$N_{K/\Q}(\beta)=41$~\cite{milioDatabaseModularPolynomials2016}.

\subsection{Hilbert-normalized curve equations.}

We give another method to reconstruct such equations using the pullback of the
modular form~$\chi_{6,8}$ as a Hilbert modular form.  We continue to use the
notation of~§\ref{subsec:hilbert-siegel}.

\begin{prop}
  \label{prop:f86-pullback}
  Define the functions~$b_i(t)$ for $0\leq i\leq 6$ on~$\Half_1^2$ by
  \begin{displaymath}
    \det\nolimits^8\Sym^6(R)\,
    \chi_{6,8}\bigl(H(t)\bigr)
    = \sum_{i = 0}^6 b_i(t)\, x^i.
  \end{displaymath}
  Then $b_2$ and $b_4$ are identically zero, and we have
  \begin{displaymath}
    \begin{aligned}
      b_3^2 &= 4 F_{10} F_6^2, \phantom{\dfrac{1}{25}}\\
      b_1 b_5 &= \dfrac{36}{25} F_{10} F_6^2 - \dfrac{4}{5} F_{10}^2 G_2,\\
      b_0 b_6 &= \dfrac{-4}{25} F_{10} F_6^2 + \dfrac{1}{5} F_{10}^2 G_2,\\
      b_3 \bigl(b_0^2 b_5^3 + b_1^3 b_6^2 \bigr) &= 123 F_{10}^3 F_6 -
      \dfrac{32}{25} F_{10}^2 F_6^2 G_2^2 + \dfrac{288}{125} F_{10}
      F_6^4 G_2 - \dfrac{3456}{3125} F_6^6.
    \end{aligned}
  \end{displaymath}
\end{prop}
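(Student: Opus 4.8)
\textbf{Proof plan for \cref{prop:f86-pullback}.}
The overall strategy mirrors the proofs of \cref{prop:f86,thm:vector-identification}: work with the pullback via the Hilbert embedding, use \cref{prop:mf-pullback} to identify weights, reduce each claimed identity to an equality of Hilbert modular forms of a fixed parallel weight, and then verify it using $w$-expansions. First I would recall from \cref{prop:f86,prop:identification} that $f_{8,6} = \chi_{10}\, X$ as covariants, equivalently $f_{8,6}(\tau) = \chi_{10}(\tau)\,\Crv(\tau)$, so that by \cref{prop:hilb-standard-curve} the polynomial $\det^{-2}\Sym^6(R)\, f_{8,6}(H(t)) = H^*\chi_{10}(t)\cdot \Crv_K(t)$, whose coefficients $b_i(t)$ are thus $H^*\chi_{10}$ times the coefficients of the standard Hilbert-normalized curve $\Crv_K(t)$. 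The vanishing of $b_2$ and $b_4$ is the pullback statement of \cref{prop:f86} read through \cref{prop:mf-pullback}: under the splitting of $\det^8\Sym^6$ on diagonal matrices, $b_i$ is a Hilbert modular form of weight $(8+i, 14-i)$, so symmetry (\cref{cor:scalar-pullback}, since $\det M_\sigma = 1$) forces $b_i$ and $b_{6-i}$ to be exchanged by $\sigma$; the claim is simply that $b_2 = b_4 = 0$, which one checks on the leading $w$-expansion terms coming from the $q$-expansion of $f_{8,6}$ in \cref{ex:f86} together with $q_1 = w_1$, $q_2 = w_2$, $q_3 = w_1 w_2$.

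Next, for the four displayed identities, I would observe that each left-hand side is a symmetric Hilbert modular form of parallel even weight: $b_3^2$ has weight $(22,22)$, hence total parallel weight $22$; $b_1 b_5$ and $b_0 b_6$ likewise have parallel weight $22$; and $b_3(b_0^2 b_5^3 + b_1^3 b_6^2)$ has parallel weight $11 + 2\cdot 8 + 3\cdot 13 = 66$, wait — more carefully $b_0$ has weight $(8,14)$, $b_5$ has weight $(13,9)$, $b_3$ has weight $(11,11)$, so $b_0^2 b_5^3 b_3$ has weight $(16+39+11, 28+27+11) = (66,66)$ and $b_1^3 b_6^2 b_3$ has weight $(27+28+11, 39+16+11) = (66,66)$; these agree, as they must. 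By \cref{thm:hilbert-structure} the space of symmetric Hilbert modular forms of parallel weight $w$ for $K=\Q(\sqrt5)$ is spanned by monomials $G_2^a F_6^b F_{10}^c$ with $2a+6b+10c = w$, so each right-hand side lies in the correct space and the identities are linear relations among finitely many $w$-expansion coefficients. Concretely I would: (i) compute the $w$-expansion of $\det^8\Sym^6(R)\,f_{8,6}(H(t))$ to sufficient precision from \cref{ex:f86}, extracting the $b_i$; (ii) compute the $w$-expansions of $G_2, F_6, F_{10}$ from \cref{thm:hilbert-structure}; (iii) form the relevant products and solve the (overdetermined, hence self-checking) linear system, exactly as \cref{alg:qexp-to-cov} does on the Siegel side.

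The main obstacle I anticipate is precision bookkeeping: the weight-$66$ identity requires the $w$-expansion of $f_{8,6}\circ H$ to fairly high order (the space of parallel weight $66$ forms has dimension on the order of ten, and the products $b_0^2 b_5^3 b_3$ involve sixth-degree combinations of series each of which must be known past its leading term), so one must carry enough coefficients that the linear system determines all unknowns and leaves redundant equations for verification — a Hecke--Sturm-type bound as in \cite{burgosgil_HeckeSturmBounds2017} guarantees this terminates, and in practice a modest precision $\prc$ suffices. A secondary subtlety is pinning down the scalar constants (the $4$, $36/25$, $-4/25$, $123$, etc.) rather than merely the identities up to scaling: this is handled automatically because both sides are computed from genuine $w$-expansions with no free normalization left — the normalization of $f_{8,6}$ was fixed in \cref{ex:f86} and that of $G_2, F_6, F_{10}$ in \cref{thm:hilbert-structure} — so matching the leading coefficients fixes everything. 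Once the $w$-expansions match to the Sturm bound, the identities hold exactly, and the computation (done in Pari/GP, as in \cref{thm:vector-identification}) completes the proof.
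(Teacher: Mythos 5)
Your proposal takes essentially the same route as the paper: identify the weight $(8+i,14-i)$ of each $b_i$ via \cref{prop:mf-pullback}, use the action of $M_\sigma$ to see that $\sigma$ exchanges $b_i$ and $b_{6-i}$, compute the $w$-expansions from the Siegel $q$-expansion of $f_{8,6}$ via $q_1=w_1$, $q_2=w_2$, $q_3=w_1w_2$, and identify the symmetric parallel-even-weight combinations against $G_2,F_6,F_{10}$ by linear algebra up to a Sturm-type bound. The only point to tighten is $b_2=b_4=0$, which is not a symmetry consequence and needs more than ``leading terms'' --- either a Sturm bound for the non-parallel weight $(10,12)$, or the observation that $b_2b_4$ is a symmetric form of parallel weight $22$ identified as $0$, whence $b_2=b_4=0$ since the ring of Hilbert modular forms is an integral domain and $\sigma$ swaps the two factors.
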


\begin{proof}
  By \Cref{prop:mf-pullback}, each coefficient~$b_i$ is a Hilbert modular form
  for~$K$ of weight~$(8+i,14-i)$, and~$\sigma$ exchanges~$b_i$
  and~$b_{6-i}$. From the~$q$-expansion for~$\chi_{6,8}$, we compute
  the~$w$-expansions of the~$b_i$'s, and use linear algebra to identify
  symmetric combinations of the~$b_i$'s of even weight in terms of the
  generators~$G_2, F_6, F_{10}$. We find that $b_2b_4 = 0$, and thus both~$b_2$
  and~$b_4$ must be identically zero.
\end{proof}

By construction, for each~$t\in\Half_1^2$, the genus~$2$ curve equation
$\sum_{i=0}^6 b_i(t) x^i$ is potentially Hilbert-normalized. Thus, we obtain an
alternative to \cref{algo:hilb-curve-2} for the construction of a potentially
Hilbert-normalized curve equation given a tuple of Igusa invariants $(j_1,j_2,j_3)$ that
does not use the Humbert equation.

\begin{algo}~
  \label{algo:hilb-curve-1}
  \begin{enumerate}
  \item Compute the Gundlach invariants $(g_1, g_2)$ mapping to $(j_1,j_2,j_3)$ via
    $H$ with \Cref{prop:igusa-pullback}, and choose values for
    $G_2, F_6, F_{10}$ giving these invariants.
  \item Compute $b_3^2$, $b_1 b_5$, etc.\ using
    \Cref{prop:f86-pullback}.
  \item Recover values for the coefficients as follows. Choose a square root
    for~$b_3$. Choose a arbitrary value for~$b_1$, which gives~$b_5$. Finally, solve a
    quadratic equation to find~$b_0$ and~$b_6$.
  \end{enumerate}
\end{algo}

We can always choose values $G_2, F_6, F_{10}$ such that~$b_3^2$ is a square in
$k$; then, the output of \cref{algo:hilb-curve-1} is defined over a quadratic
extension of $k$.

\subsection{Computing the tangent matrix.} Using Gundlach invariants instead of
Igusa invariants, we can compute the tangent matrix of a $\beta$-isogeny
without any reference to the Hilbert embedding into the Siegel threefold. To
formulate this result, we develop a notion of covariant attached to a Hilbert
modular form that one can evaluate on a Hilbert-normalized curve equation, as
announced in~§\ref{subsec:explicit-hilbert}.

First, if~$(A,\iota)$ is a p.p.~abelian surface with real multiplication
by~$\Z_K$, if~$\omega$ is a Hilbert-normalized basis of~$\Omega^1(A)$, and
if~$f$ is a Hilbert modular form of weight~$(k_1,k_2)$, then the
quantity~$f(A,\iota,\omega)$ makes sense. To define it, choose~$t\in\Half_1^2$
and an isomorphism $\eta:(A,\iota)\to (A_K(t),\iota_K(t))$. Then the matrix
of~$\eta^*$ in the bases~$\omega_K(t)$ and~$\omega$ is a diagonal
matrix~$\Diag(r_1,r_2)$, and we set
\[
  f(A,\iota,\omega) \defi r_1^{k_1} r_2^{k_2} f(t).
\]
This allows us to define the ``covariant''~$\Cov_K(f)$ as the rule which, to
genus~$2$ curve equation~$E$ that is Hilbert-normalized for a real
multiplication embedding~$\iota$ on~$\Jac(\Crv_E)$,
associates~$f(\Jac(\Crv_E),\iota,\omega_E)$.

Next, we note that if~$f$ is a Hilbert modular function of weight~$0$, its
partial derivatives
\[
  \frac{1}{\pi i} \frac{\partial f}{\partial t_1}
  \quad\text{and} \quad \frac{1}{\pi i} \frac{\partial f}{\partial t_2},
\]
where~$(t_1,t_2)$ are the coordinates on~$\Half_1^2$, are Hilbert modular
functions of weight $(2,0)$ and~$(0,2)$ respectively. This is easily seen by
differentiating the equation $f(\gamma t) = f(t)$, for
all~$\gamma\in \Gamma_K$, with respect to~$t$. As a consequence, the function
\begin{displaymath}
  DG(t) \defi \Bigl( \frac{1}{\pi i} \frac{\partial g_k}{\partial t_l} \Bigr)_{1\leq k, l\leq 2}
\end{displaymath}
is a ``matrix-valued'' Hilbert modular function; its weight is the
representation~$\rho$ of~$\GL_1(\C)\times\GL_1(\C)$ on~$\Mat_{2\times 2}(\C)$ given by
\begin{displaymath}
  \rho(r_1,r_2): M\mapsto M \Diag(r_1^2, r_2^2).
\end{displaymath}
We will formulate the computation of the tangent matrix~$d\isog$ in terms of
the associated covariant~$\Cov_K(DG)$. This raises the question of how to
evaluate this covariant on a given potentially Hilbert-normalized curve
equation. Fortunately, we can directly relate this to our study of~$\Cov(DJ)$
on the Siegel threefold. Let~$M(g_1,g_2)$ be the $3\times 2$ matrix obtained by
differentiating \cref{prop:igusa-pullback}, so that
\begin{displaymath}
  DH^\ast\!J(t) \defi \Bigl(\frac{1}{\pi i} \frac{\partial H^*j_k}{\partial t_l} \Bigr)_{1\leq k\leq 3,\,1\leq l\leq 2}
  = M(g_1(t),g_2(t))\cdot  DG(t).
\end{displaymath}

\begin{prop}
  \label{prop:hilbert-cov}
  Let~$E$ be a potentially Hilbert-normalized genus~$2$ curve equation, and
  let~$(g_1,g_2)$ be the Gundlach invariants of~$\Jac(\Crv_E)$. Then we have
  \begin{displaymath}
    \Cov(DJ)(E)\cdot T = M(g_1,g_2)\cdot \Cov_K(DG)(E).
  \end{displaymath}
\end{prop}

\begin{proof}
  Equip~$\Jac(\Crv_E)$ with the real multiplication embedding for which~$E$ is
  Hilbert-normalized, and choose an isomorphism~$\eta: \Jac(\Crv_E)\to A_K(t)$
  for some~$t\in \Half_1^2$. Let~$r\in \GL_2(\C)$ be the matrix of~$\eta^*$ in
  the bases~$\omega_K(t)$ and~$\omega_E$, and let~$\tau = H(t)$. By the
  expression of the Hilbert embedding, the columns of~$DH^*\!J(t)$ contain the
  derivatives of the Igusa invariants at~$\tau$ in the directions
  \begin{displaymath}
    \frac{1}{\pi i} R^t \mat{1}{0}{0}{0} R \quad \text{and}\quad
    \frac{1}{\pi i} R^t \mat{0}{0}{0}{1} R.
  \end{displaymath}
  Therefore, we have
  \begin{align*}
    DH^*\!J(t) &= \djdtau(\tau)\cdot \Sym^2(R^t)\cdot T \quad &\text{by \cref{lem:sym2-DJ}}\\
               &= \Cov(DJ)(E) \cdot \Sym^2(r^{-t}) \cdot T \quad &\text{by \cref{lem:dt-dtau}}\\
               &= \Cov(DJ)(E) \cdot T \cdot r^{-2} \quad &\text{as~$r$ is diagonal.}
  \end{align*}
  On the other hand,
  \begin{displaymath}
    DH^*\!J(t) = M(g_1,g_2) \cdot DG(t) = M(g_1,g_2) \cdot \Cov_K(DG)(E) \cdot r^{-2}. \qedhere
  \end{displaymath}
\end{proof}

Since the Igusa invariants define a birational map from $\Htwo(\C)/\sigma$ to
the Humbert surface, the matrix $M(g_1,g_2)$ generically has rank~$2$. Thus we
can combine \cref{prop:hilbert-cov} with the expression of~$DJ$ as a covariant
to evaluate $\Cov_K(DG)(E)$.

Now we can formulate an alternative to \cref{prop:norm-matrix-hilbert} to
compute the tangent matrix~$d\isog$. We define the $2\times 2$ matrices
\begin{displaymath}
  D\Psi_{\beta,L} \defi \left(\dfrac{\partial\Psi_{\beta,n}}{\partial
      G_k}\right)_{1\leq n,k \leq 2} \quad\text{and}\quad
  D\Psi_{\beta,R} \defi \left(\dfrac{\partial\Psi_{\beta,n}}{\partial
      G_k'}\right)_{1\leq n,k \leq 2}.
\end{displaymath}

\begin{prop}
  Let~$\isog:A\to A'$ be a $\beta$-isogeny between p.p.~abelian surfaces with
  real multiplication by~$\Z_K$. Let~$g$ (resp.~$g'$) denote the Gundlach
  invariants of~$A$ (resp.~$A'$), and let~$E$ (resp.~$F$) be a
  Hilbert-normalized curve equations for~$A$ (resp.~$A'$). Assume that $(A,A')$
  is generic in the sense that the matrices $D\Psi_{\beta,L}(g,g')$,
  $D\Psi_{\beta,R}(g,g')$, $\Cov_K(DG)(E)$ and~$\Cov_K(DG)(F)$ are
  invertible. Then the only~$\beta$-isogenies from~$A$ to~$A'$ are $\pm\isog$,
  and we have
  \begin{displaymath}
    (d\isog)^2 = - \Diag(\beta, \conj{\beta}) \cdot \Cov(DG)(F)^{-1} \cdot D\Psi_{\beta,R}(g,g')^{-1}
    \cdot D\Psi_{\beta,L}(g,g') \cdot \Cov_K(DG)(E).
  \end{displaymath}
\end{prop}

\begin{proof}
  Left to the reader: one can follow the proof of \cref{prop:norm-matrix}.
\end{proof}

Using the formalism of §\ref{sec:moduli}, one can prove that $(A,A')$ is
generic if~$A$ and~$A'$ have only~$\Z_K^\times$ as automorphisms, have
$g_1\neq 0$, and if the modular equations in terms of Gundlach invariants cut
out a normal subvariety of~$\Avar^2\times\Avar^2$ at~$(g,g')$.

\subsection{An example of a cyclic isogeny}
\label{sec:ex}

We illustrate our algorithm in the Hilbert case with
$K = \Q(\sqrt{5})$ by computing a $\beta$-isogeny between Jacobians
with real multiplication by $\Z_K$, where
\begin{displaymath}
  \beta = 3 + \dfrac{1 + \sqrt{5}}{2} \in\Z_K,\quad N_{K/\Q}(\beta)=11, \quad \Tr_{K/\Q}(\beta)=7.
\end{displaymath}
We work over the prime finite field $k = \F_{56311}$, whose
characteristic is large enough for our purposes. We choose a
trivialization of $\Z_K\otimes k$, in other words a square root
of~$5$ in~$k$, such that $\beta=26213$.

Consider the Gundlach invariants
\begin{displaymath}
  (g_1,g_2) = \bigl(23, 56260\bigr), \quad (g_1', g_2') = \bigl(8, 36073\bigr).
\end{displaymath}
\Cref{algo:hilb-curve-1} provides the Hilbert-normalized curve equations
\begin{displaymath}
  \begin{aligned}
    \Crv_E\defby v^2 &= E(u) = 13425 u^6 + 34724 u^5 + 102 u^3 + 54150u + 11111,
    \\
    \Crv_F \defby y^2 &= F(x) = 47601 x^6 + 35850 x^5 + 40476 x^3 + 24699 x + 40502.
  \end{aligned}
\end{displaymath}
The derivatives of the Gundlach invariants at these points are given by
\begin{displaymath}
  \Cov_K(DG)(E) = \mat{43658}{17394}{16028}{26656},
  \quad \Cov_K(DG)(F) = \mat{15131}{739}{50692}{49952}.
\end{displaymath}
Computing derivatives of the modular equations as in
\Cref{prop:norm-matrix-hilbert}, we find that the isogeny is compatible with
the real multiplication embeddings for which~$E$ and~$F$ are
Hilbert-normalized. We do not known whether~$\isog$ is a~$\beta$- or a
$\conj{\beta}$-isogeny, so we have four candidates for the tangent matrix up to
sign:
\begin{displaymath}
  \begin{aligned}
    d\isog_{\beta,\pm} &= \mat{38932\alpha + 19466}{0}{0}{\pm(53318\alpha + 26659)},\\
    d\isog_{\conj{\beta},\pm} &= \mat{50651\alpha +
      53481}{0}{0}{\pm(11076\alpha + 5538)}
  \end{aligned}
\end{displaymath}
where $\alpha^2 + \alpha + 2 = 0$. We see that for these choices of curve
equations, the isogeny~$\isog$ is only defined over a quadratic extension
of~$k$; we could take a quadratic twist of~$\Crv_F$ to find a tangent matrix
over~$k$ instead.

The curve~$\Crv$ has a rational Weierstrass
point~$\bigl(36392, 0\bigr)$. We can bring it to~$(0,0)$, so
that~$\Crv$ is of the standard form
\begin{displaymath}
  \Crv\defby v^2 = 33461 u^6 + 7399 u^5 + 16387 u^4 + 34825 u^3 +
  14713 u^2 + u.
\end{displaymath}
This multiplies the tangent matrix on the right by
\begin{displaymath}
  \mat{44206}{18649}{0}{7615}.
\end{displaymath}

Choose $P = (0,0)$ as a base point on~$\Crv$, and~$z = \sqrt{u}$ as a
uniformizer. We solve the differential system up to precision~$O(z^{29})$.  It
turns out that the correct tangent matrix is~$d\isog_{\conj{\beta}, +}$ as the
other series do not come from rational fractions of degrees prescribed by
\Cref{prop:spqr-degrees}. We obtain in particular
\begin{align*}
  s(u) &= \frac{50255u^6 + 40618u^5 + 17196u^4 + 9527u^3 + 22804u^2
         + 49419u + 11726}{u^6 + 40883u^5 + 22913u^4 + 41828u^3 +
         18069u^2 + 14612u + 7238}, \\[2pt]
  p(u) &= \frac{35444u^6 + 9569u^5 + 52568u^4 + 3347u^3 + 9325u^2 +
         32206u + 7231}{u^6 + 40883u^5 + 22913u^4 + 41828u^3 + 18069u^2 +
         14612u + 7238}.
\end{align*}



\printbibliography

\smallskip

\end{document}